\documentclass[reqno, 11pt]{amsart}
\usepackage{hyphenat}
\usepackage{amssymb}
\usepackage{amsrefs}
\usepackage{color}
\usepackage{orcidlink}
\usepackage{academicons}
\usepackage{xcolor} 
\usepackage{multicol}
\setlength{\oddsidemargin}{0cm}
\setlength{\evensidemargin}{0cm}
 \setlength{\textwidth}{16.45cm} \setlength{\textheight}{22.8cm}
\setlength{\voffset}{-1cm}
 
\allowdisplaybreaks
 
\theoremstyle{plain}
\newtheorem{thm}{Theorem}[section]
\newtheorem{lemma}[thm]{Lemma}

\theoremstyle{definition}

\newtheorem{remark}[thm]{Remark}

\newtheorem{ex}[thm]{Example}

\def\Xint#1{\mathchoice
   {\XXint\displaystyle\textstyle{#1}}%
   {\XXint\textstyle\scriptstyle{#1}}%
   {\XXint\scriptstyle\scriptscriptstyle{#1}}%
   {\XXint\scriptscriptstyle\scriptscriptstyle{#1}}%
   \!\int}
\def\XXint#1#2#3{{\setbox0=\hbox{$#1{#2#3}{\int}$}
     \vcenter{\hbox{$#2#3$}}\kern-.5\wd0}}
\def\dashint{\Xint-}

\def\d{{\fam0 d}}
\def\BMO{\operatorname{BMO}}
\def\N{\mathbb N}

\def\R{\mathbb R}
 
 \def\Rn{\mathbb R\sp n}
 
\headheight=12pt

\newcommand{\overbar}[1]{\mkern 1.5mu\overline{\mkern-1.5mu#1\mkern-1.5mu}\mkern 1.5mu}

\numberwithin{equation}{section}

\hyphenation{re-ar-ran-ge-ment}
\hyphenation{in-va-ri-ant}
\hyphenation{em-bed-ding}

\begin{document}

\title[Optimal Sobolev embeddings  for generalized Lorentz\hyp{}Zygmund spaces]{Optimal Sobolev embeddings \\ for generalized Lorentz\hyp{}Zygmund spaces}
 
\author{Paola Cavaliere}
\address{Dipartimento di Matematica, Università di Salerno, Via Giovanni Paolo II, 84084 Fisciano (SA), Italy}
\email[Corresponding author]{pcavaliere@unisa.it}
\thanks{Research Project of the University of Salerno \lq \lq Regolarità di soluzioni di equazioni alle derivate parziali non lineari e metodi funzionali'', grant number ORSA240999; GNAMPA   of the Italian INdAM - National Institute of High Mathematics (grant number not available)}

\author{Ladislav Drážný}
\address{Department of Mathematical Analysis, Faculty of Mathematics and Physics, Charles University, Sokolovská~83, 186~75, Praha~8, Czech Republic}
\address{Department of Mathematics, Faculty of Electrical Engineering, Czech Technical University in Prague, Technická~2, 166~27, Praha~6, Czech Republic}
\email{draznyl2@seznam.cz}
\thanks{Grant number 314325 of the Charles University Grant Agency; Grant number 23-04720S of the Czech Science Foundation;
 Primus research programme PRIMUS/21/SCI/002 of Charles University;  Charles University research programme number UNCE24/SCI/005} 

\numberwithin{equation}{section}

\subjclass[2000]{Primary 46E35,   46E30. Secondary  46E10, 26B35}

\keywords{Sobolev embeddings,  rearrangement-invariant spaces, generalized Lorentz\hyp{}Zygmund spaces, H\"older type spaces,  Morrey type spaces, Campanato type spaces, John domains, Jones domains}

\date{today}

\begin{abstract} This work deals with embeddings, of any integer order, for generalized Lorentz\hyp{}Zygmund\hyp{}Sobolev spaces on Euclidean domains satisfying minimal regularity assumptions. Explicit re\-ar\-ran\-ge\-ment\hyp{}invariant,  H\"older, Morrey and Campanato optimal target spaces are exhibited. 
\end{abstract}
 
\maketitle

\section{Introduction}\label{Intro}
  
Arbitrary order Sobolev spaces on bounded domains naturally come into play in the study of  variational problems and PDEs involving weak derivatives, especially for those arising from geometry and physics  (see e.g.~\cites{BB, Mabook, Pedregal, Sobook}). These analytical problems benefit not only from information arising from the seamless combination of weak differentiability and integrability properties required to  functions  within a Sobolev space,  but also of the extra regularity that functions gain through such a membership whenever the underlying domain is regular enough, as a result of the so\hyp{}called Sobolev embeddings.

To be more specific, and keep things simple, let $\Omega$ be a \lq nice' bounded domain. Here, and in what follows, the term domain and the symbol $\Omega$ will be reserved for a connected open set in  the $n$\hyp{}dimensional real Euclidean space, with $n \geq 2$.  
 
\noindent Given any positive integer  $m$ and $p \in [1, \infty]$,  let $W^{m,p}(\Omega)$ be   the usual Sobolev space of order $m$ (and of integrability $p$) of those functions in $\Omega$ which together with all their (existing) weak  derivatives up to the order $m$  belong to  the Lebesgue space  $L^p(\Omega)$. 
Then, the outstanding results of S.~Sobolev \cite{sobolev1938theorem}, E.~Gagliardo \cite{Gagliardo}, L.~Nirenberg \cite{Nir}, and C.B.~Morrey \cite{MOR}  -typically merged in the so\hyp{}called Sobolev Embedding Theorem in spite of their  different nature and methods of proof (see e.g.~\cites{AF, Bressan})- reveal  that functions in $W^{m,p}(\Omega)$ always enjoy extra regularity.  This is dictated  -solely, if  $p \in (1, \infty)$- by the behavior of the quantity $m - \frac np$ with respect to $0$, in the usual trichotomy sense. The latter quantity is therefore referred to as the  net smoothness number of functions in $W^{m,p}(\Omega)$ (see e.g.~\cites{Henry, Bressan}), where the standard convention that $\frac n\infty = 0$ is adopted.
  
More precisely, one has that  
\begin{equation}\label{CS1} W^{m,p}(\Omega) \ \hookrightarrow  \ 
\begin{cases} L^{p^{\ast}}\!(\Omega) \quad \ &\text{if} \ m - \tfrac n p< 0\\[0.2ex]  L^{q}(\Omega) \quad  \text{for every} \ q \in \left[\tfrac nm,  \infty\right) \quad & \text{if} \ m - \tfrac np= 0, \   p  \neq 1  \\[0.2ex]L^{\infty}(\Omega) \quad \ &\text{if} \, \begin{cases}m - \tfrac n p= 0, \ p=1\\[0.2ex]  m - \tfrac n p> 0. 
\end{cases} 
\end{cases}
\end{equation} 
Here, and in what follows, the arrow '$\hookrightarrow$\label{ARROW}' between two spaces stands for a continuous embedding, and $p^{\ast}= \tfrac{np}{n-mp}$\label{Sconj}, the Sobolev conjugate of $p \in \left[1, \tfrac nm \right)$. 

\noindent Then, the (appropriate) regularity of $\Omega$  guarantees the embeddings \eqref{CS1}\textsubscript{3} and \eqref{CS1}\textsubscript{4} to be  refined, respectively, as follows. First,
\begin{equation}\label{CS2}
W^{m,p}(\Omega) \ \hookrightarrow  \  C^0_b(\Omega) \quad \  \text{if} \  m - \tfrac n p= 0, \ p=1,
\end{equation} 
where  $C^0_b(\Omega)$\label{CON} denotes the space of bounded continuous functions on $\Omega$ endowed with the usual norm. 
Next, when $m - \tfrac np> 0$,  functions in $W^{m,p}(\Omega)$ (or pointwise a.e.~equivalent versions of them) are H\"older continuous, and the  order of Hölder continuity  does depend on how their net smoothness number behaves with respect to $1$. 
Exactly, 
\begin{equation}\label{CS3} 
W^{m,p}(\Omega) \ \hookrightarrow \ 
\begin{cases}    C^{0, m - \frac np } (\Omega)  & \text{if} \ m - \tfrac np \in (0,1)\\[0.2ex] 
C^{0, \nu } (\Omega) \quad  \text{for every} \ \nu \in  (0,1) \quad & \text{if} \ m - \tfrac np= 1, \   p \in (1,\infty)  \\
C^{0, 1 } (\Omega)   \quad & \text{if} \ \begin{cases}m - \tfrac np= 1, \   p \in \{ 1, \infty\}  \\[0.2ex]  m - \tfrac np> 1, \   p \in(1, \infty], \end{cases}
\end{cases}
\end{equation}
where $C^{0, \nu} (\Omega)$,  for $\nu \in (0, 1]$, is the space of H\"older continuous functions with exponent $\nu$. 

\noindent It is worth noting that both the spaces ${L}^{p^{\ast}}\!(\Omega)$ and $L^{\infty}(\Omega)$ are known to be  optimal (i.e.~smallest possible) among all Lebesgue target spaces on $\Omega$ in the embeddings \eqref{CS1}\textsubscript{1} and \eqref{CS1}\textsubscript{3,4}, respectively.
By contrast,  no optimal Lebesgue target space exists for the embedding \eqref{CS1}\textsubscript{2}.  
Indeed, when $m<n$, functions in $W^{m,\frac nm}(\Omega)$ need not be bounded (and may not be even continuous), as simple counterexamples 
-typically with logarithmic singularities-  show  (see e.g.~\cites{Moser, TRU} for $m=1$).    
A parallel scenario emerges in the embeddings  \eqref{CS3}. The target spaces in \eqref{CS3}\textsubscript{1} and   \eqref{CS3}\textsubscript{3,4} are optimal among all H\"older spaces on $\Omega$, whereas no optimal (Hölder) target space for $W^{m,p}(\Omega)$  exists when  $p=\frac n{m-1} \in (1,\infty)$. Indeed, it is well known that functions in $W^{m,\frac n{m-1}}(\Omega)$ are not necessarily  Lipschitz continuous for $m \in (1,n]$. 

\noindent A  way to overcome these difficulties has been to enlarge the classes of admissible target spaces. As a consequence of this, improved versions of the above mentioned optimal embeddings have also been  derived. For instance, as far as the Sobolev\hyp{}Gagliardo\hyp{}Nirenberg embeddings \eqref{CS1} are concerned, R.~O’Neil \cite{ONeil} and J.~Peetre \cite{Peetre} independently  provided  a non\hyp{}trivial strengthening of the  embedding \eqref{CS1}\textsubscript{1} when $p >1$ through the replacement of the Lebesgue space $L^{p^{\ast}}\!(\Omega)$ with the (smaller) Lorentz space $L^{p^{\ast}\!, p} (\Omega)$.   
Notably, the Lorentz space $L^{p^{\ast}\!, p} (\Omega)$ turns up to be optimal (i.e.~the smallest possible) among  all rearrangement\hyp{}invariant target spaces in \eqref{CS1}\textsubscript{1}, as shown in \cite{EKP}*{Theorem~5.11 and Example~7(1)} (and recovered by Theorem~3.1 below). Rearrangement\hyp{}invariant spaces are, in a sense, spaces of measurable functions whose norms depend  merely on the size of the functions, or, more precisely, on the measure of their level sets.  
A precise definition of these spaces, as well as other notions employed hereafter, can be found in Section~\ref{sec2} below, where the necessary background material is collected. 
Let us warn that the Lorentz  functional framework is, however,  not suited to restoring the existence of the optimal target space in embedding \eqref{CS1}\textsubscript{2}  for $W^{m,\frac nm}(\Omega)$ when $m < n$, which indeed calls for the Lorentz\hyp{}Zygmund space $L^{\infty, \frac nm; -1} (\Omega)$ (see \cites{BW, Hansson} for the improved target space and \cite{EKP}*{Theorem~5.11 and Example~7(2)} -also  Theorem~3.1 below-  for its optimality in the class of rearrangement\hyp{}invariant spaces on $\Omega$). 

\noindent Hence, heuristically speaking, the quest for optimal Sobolev embeddings depends  heavily on the chosen class of function target spaces.
Consequently,  responding to  specific applied problems, a modern theory of Sobolev spaces  has developed in the last five decades.
The crux of the matter is to replace the Lebesgue class with an ad hoc functional framework allowing optimal integrability and sharp oscillation properties of functions within these Sobolev\hyp{}type spaces. There is nowadays an extensive literature devoted to these issues (see e.g.~\cites{CCPS2, Cia_cont, Cia_sharp, Cia_opt, Cia_forum, Cia_Maz, CPbmo, CPcamp, CP_trans, CPS, CPS2, Monia1, Costea, DT, Lada, EGO1, EGO2, EGO3, EKP, FLS, GNO, JN, KP1, KP2, Jan, Moser,  Poho, Talenti, TRU}, also \cites{ED, Kol, Lubos, Mabook} and  the  references therein). 

 In particular, combining deep results from \cites{CPS, Monia1} and the more recent contribution \cite{CCPS2} yields a comprehensive picture on
optimal integrability and sharp oscillation properties of functions within Sobolev
spaces built upon arbitrary rearrangement-invariant spaces on domains $\Omega$ more general than Lipschitz. The Sobolev\hyp{}type spaces $W^{m}X(\Omega)$ are defined in analogy with the standard Sobolev spaces, where the role of the Lebesgue spaces $L^p(\Omega)$ is played by general rearrangement\hyp{}invariant spaces $X(\Omega)$. 

\noindent Specifically, \cite{CPS}*{Theorem~6.9} provides us with a sharp counterparts of embeddings \eqref{CS1} for any  rearrangement\hyp{}invariant space $X(\Omega)$ whenever $\Omega$ is a John domain. Loosely speaking, the domain $\Omega$ allows to travel from one of its points to another without getting too close to the boundary. The optimal (i.e. smallest possible) rearrangement\hyp{}invariant target space in Sobolev\hyp{}type embedding  for $W^mX(\Omega)$ always exists, and it is described in an implicit way (see~\eqref{E:Opt_S} -and  also \eqref{E:EOPJ2}- in Section~\ref{Sec:John}). \\ 
Sharp counterparts of embeddings \eqref{CS2} and \eqref{CS3} for general Sobolev spaces $W^mX(\Omega)$ are established in \cite{Monia1}. As we shall point out in  Section~\ref{Sec:Jones}, although stated for bounded Lipschitz domains, results in \cite{Monia1}*{Section~3} continue to hold for any bounded Jones domain, namely for those John domain fulfilling an appropriate double twisted cone property. The general setting under consideration naturally calls for H\"older target spaces $C^{0,\sigma(\cdot)}(\Omega)$ from broader classes than those employed in \eqref{CS3}, defined in terms of a general modulus of continuity  $\sigma$, which need not be just a power. Then, \cite{Monia1}*{Theorem~3.4} tells us that the optimal modulus of continuity for the Sobolev\hyp{}type embedding  
\begin{equation}\label{EmbHolder} W^mX(\Omega)\, \hookrightarrow \, C^{0, \widehat \sigma_{m,X}(\cdot)}(\Omega) 
\end{equation}
to hold does exist, and provides its rather implicit  description as well (see~\eqref{feb31} in Section~\ref{HOL}).
 
\noindent Lastly, \cite{CCPS2} deals with optimal  embeddings for general Sobolev spaces $W^mX(\Omega)$ into Morrey and Campanato type spaces. 
Loosely speaking, Morrey  norms measure the degree of decay of norms of functions over balls when their radius approaches zero whereas Campanato  (semi\hyp{})norms provide information on the degree of decay over balls of the oscillation of functions in norm. Both decay rates on balls are described by means of a general positive function $\varphi$ on $(0, \infty)$, that is not necessarily either continuous or monotone.
In the case when $\varphi$ is a power, these generalized spaces reproduce the classical ones \cites{MOR, Ca0, Ca, JN} (also e.g.~\cites{AX, AlS, AlS2, DDY,  Man:21, Samko} and the monographs \cites{Korenobook, Saw_DiF, Stbook, Trie}). Let us mention that Morrey and Campanato type spaces were introduced and thoroughly analyzed in \cite{Sp}, and they are of use, for instance, in the description of sharp assumptions and regularity properties of solutions to nonlinear elliptic problems (see e.g.~\cites{BCDS,BySo, CW,CIS}).
More precisely, \cite{CCPS2}*{Theorem~2.2} exhibits   the optimal (smallest) Morrey space $\mathcal M^{\widetilde\varphi_{m,X}(\cdot)}(\Omega)$ such that 
\begin{equation}\label{EmbMor}
W^{m}X(\Omega) \to \mathcal M^{\widetilde\varphi_{m,X}(\cdot)}(\Omega), 
\end{equation}
and \cite{CCPS2}*{Theorem~2.6}  provides us with  the optimal (smallest) Campanato space $\mathcal L^{\widehat \varphi_{m, X}(\cdot)}(\Omega)$  such that 
\begin{equation}\label{EmbCam}
W^{m}X(\Omega) \to \mathcal L^{\widehat \varphi_{m, X}(\cdot)}(\Omega).
\end{equation} 
The optimal Morrey and Campanato target spaces in the embeddings \eqref{EmbMor} and \eqref{EmbCam} are associated with the functions  $\widetilde\varphi_{m,X}$ and $\widehat\varphi_{m,X}$, respectively, that are almost explicitly described by \eqref{E:optMo} and \eqref{E:optCa} in Section~\ref{P:MorreyCamp}, respectively. 

The present paper aims at offering  a neat explicit description of the quoted results  in the generalized Lorentz\hyp{}Zygmund realm. We shall consider here functions belonging to Sobolev\hyp{}type spaces, defined in analogy with the standard Sobolev spaces, where the role of the Lebesgue spaces $L^p(\Omega)$ is  played by the finer  class of generalized Lorentz\hyp{}Zygmund spaces  (GLZ~spaces, for short)  $L^{p, q; \alpha, \beta}(\Omega)$ and  $L^{(1, q; \alpha, \beta)}(\Omega)$.  The parameters $p, q \in [1, \infty]$ and $\alpha, \beta \in \R$  will be   suitably chosen in order to ensure that they are (equivalent to) non\hyp{}trivial   Banach function spaces (see~\eqref{c:LZnorm} and \eqref{c:L1} below, respectively), which are virtually re\-ar\-ran\-ge\-ment\hyp{}in\-va\-ri\-ant spaces. 
 
\noindent  Much of the interest in GLZ~spaces -introduced by D.E. Edmunds, P. Gurka, B. Opic in \cites{EGO1, EGO2} and subsequently extensively analyzed in \cite{OP}-  stems from the fact that they allow one more tier of logarithms than the Lorentz\hyp{}Zygmund spaces, proposed and investigated in \cite{BR}.   
This addition of a logarithmic factor entails a finer distinction between functions; this provides, among other things, sharp versions of  embeddings  \eqref{CS1}\textsubscript{2} and \eqref{CS3}\textsubscript{2} above, as we will exhibit in Section~\ref{S:MAIN}. 
In particular, the GLZ~class $L^{(1, q; \alpha, \beta)}(\Omega)$ allows a perturbation of the $L^{1}$\hyp{}norm by two logarithmic factors, determining domain spaces  for Sobolev\hyp{}type spaces close (but not equal) to $L^{1}(\Omega)$, which are not usually considered in the huge literature on the above\hyp{}mentioned subject. Furthermore, the GLZ~class $L^{p, q; \alpha, \beta}(\Omega)$ encompasses a very wide variety of customary Banach function spaces: the Lebesgue spaces ($p = q$, $\alpha=\beta= 0$),  the classical  Lorentz spaces ($\alpha = \beta = 0$),  the  Zygmund spaces   ($p = q$, $\beta = 0$), and Lorentz\hyp{}Zygmund spaces ($\beta = 0$).
Importantly, the GLZ~class overlaps but does not coincide with that of the Orlicz spaces. 
For a detailed treatment of this matter we refer to \cite{OP}*{Section~8}. Here, we just recall that, loosely speaking, the  definition of the Orlicz scale is based on the simple idea of replacing the power function which generates Lebesgue spaces with a more general non\hyp{}negative convex function vanishing at the origin, namely, a Young function.
The space $L^{p, q; \alpha, \beta}(\Omega)$ actually reproduces (up to equivalent norms) the Orlicz space  $L^p(\log  L)^a(\Omega)$, with $a >0$, associated with any Young function equivalent to $t^p(\log  t)^a$ near infinity ($1 < p = q$, $\alpha= a/p$, $\beta = 0$), and the Orlicz space  $\text{exp}L^a(\Omega)$ ($p = q = \infty$, $\alpha = -1/a$, $\beta = 0$) associated with any Young function equivalent to $\text{exp}(t^a)$  near infinity.  By contrast, the GLZ~spaces $L^{p, q; \alpha, \beta}(\Omega)$ and $L^{(1, q; \alpha, \beta)}(\Omega)$ are never Orlicz spaces, even up to equivalent norms, if $q \neq p$ and $q \neq 1$, respectively.

 This motivates the quest for an explicit description of the quoted results \cites{CPS, Monia1, CCPS2} within the GLZ~framework. As far as we are aware, the novelty of our contribution is twofold, and of interest in view of the above mentioned applications. On the one hand, our discussion encompasses spaces $L^{(1, q; \alpha, \beta)}$ neighboring $L^{1}$. On the other, we allow for one more tier of logarithms dealing with $L^{p, q; \alpha, \beta}(\Omega)$ spaces with $\beta \neq 0$.  The advantage of the use of an additional tier of logarithms becomes apparent  when limiting embeddings are taken into account, as clearly witnessed by \eqref{E:OptRI1} below. An additional outcome of our analysis on GLZ~spaces is a neat comparison between the Sobolev embeddings into Hölder type spaces and the parallel embeddings into Campanato type spaces as well as between the latter embeddings and the Sobolev embeddings into Morrey type spaces (see Section~\ref{comparison}), which complements and augments well\hyp{}known classical results. 

The paper is structured as follows. Section~\ref{sec2} is devoted to the necessary background on rearrangement\hyp{}invariant and GLZ~spaces, the regularity conditions on the ground domain $\Omega$ and to the introduction of H\"older, Morrey and Campanato type spaces that have a role in our embeddings. We aim, indeed, at making this paper self-contained for the most part. The main results are stated in Section~\ref{S:MAIN}, organized, for ease of exposition, into different subsections according to the different families of spaces taken into account. The subsequent Section~\ref{secTec} contains two technical results designed to unify the several parameter regimes, and repeatedly employed in Section~\ref{proof}, where the proofs of the main theorems are given.  
For the reader’s convenience, we add a  concise list of the main
symbols  used throughout the paper, along with a reference to the page (and to the equation when possible) where they are first introduced or best defined.

\section{Background}\label{sec2}
In the following and in the whole paper, we set $p'=\frac{p}{p-1}$\label{Hco} for $p \in (1,\infty)$, $1'=\infty$ and $\infty'=1$. The convention that $\frac 1\infty = 0$ and  $0 \cdot \infty=0$ is adopted without further explicit reference. For two positive expressions $A$ and $B$, we will write  $A\lesssim B$\label{A} to denote that $A\leq c\, B$ for some positive constant $c$ independent of appropriate quantities appearing in the expressions $A$ and $B$. The relation  \lq \lq$\gtrsim$\label{B}” is defined accordingly. Then, we will write $A\approx B$\label{C}  when $A\lesssim B$ and $A\gtrsim B$ simultaneously. Moreover, when  $A$ and $B$ are non\hyp{}negative functions defined on an interval $(0,a)$ the above relations are said to hold near $0$ if the pertaining inequalities just hold on $(0,r_0)$ for some $r_0<a$.

\subsection{Rearrangement\hyp{}invariant spaces}\label{sec2.1}
Let $E$ be a non-negligible Lebesgue measurable subset of $\mathbb R^n$, with $n \geq 1$. We denote by $\chi_{_E}$  the characteristic function of $E$, and by $|E|$ its Lebesgue measure.
The notation $L^0(E)$\label{D} is adopted for the Riesz space of all (equivalence classes  of)  Lebesgue measurable functions   $u\colon E \to [-\infty , \infty]$,  and $L^0_+(E)= \{u \in L^0(E) \colon u \geq 0 \ \hbox{a.e. in} \, E\}$\label{E}.
   
The \textit{decreasing rearrangement}~$u^*\colon [0, \infty) \rightarrow [0, \infty]$ of a function  $u\in L^0(E)$  is defined as
\begin{equation}\label{decreasing rearrangementE}
u^*(s)= \inf\left\{t \geq 0\colon  \left|\{x\in E\colon |u(x)|>t\}\right|\leq s\right\} \quad \   \text{for} \ s\in[0,\infty),
\end{equation} 
and the \textit{maximal non\hyp{}increasing rearrangement} of $u$ is the function $u^{**}\colon (0,\infty)\to[0,\infty ]$  defined as
\begin{equation}\label{u^**}
u^{\ast \ast}(s)\, = \, {\frac 1 s}\, \int_0^{s}u^*(t)  \, \d t \quad \   \text{for} \ s\in(0,\infty).
\end{equation}
Both the functions $u^*$ and $u^{**}$ are non\hyp{}increasing,  $u^* \leq u^{**}$ on $(0,\infty)$, and $u^*(s) =0$ if $s \geq |E|$.  The operation $u \mapsto u^{**}$ is subadditive in the sense that 
\begin{equation*}\label{**}
(|u|+|v|)^{\ast \ast}  \, \leq \, u^{\ast \ast}  + v^{\ast \ast} \quad \   \text{for} \ u,v \in L^0(E),
\end{equation*}
and the \textit{Hardy\hyp{}Littlewood inequality} tells us that
\begin{equation}\label{HL.0}
 \int_{E}|u(x) v(x) |\, \d x\leq \int_{0}^{|E|}u^*(s)v^*(s)\, \d s  \quad \   \text{for} \ u,v \in L^0(E).
\end{equation}
 
A \textit{rearrangement\hyp{}invariant Banach (extended) function norm} on $L^0_+(0,1)$ -an \textit{r.i.~function norm}, for short- is a functional $\|\cdot\|_{X(0,1)}\colon L^0_+(0,1) \rightarrow [0,\infty]$\label{F} such that
\begin{align}
&\|f+g\|_{X(0,1)}     \leq  \|f\|_{X(0,1)} + \|g\|_{X(0,1)}  \ \text{and} \notag\\& \qquad \qquad \qquad \quad \|\lambda f\|_{X(0,1)}      =    \lambda  \, \|f\|_{X(0,1)}    \ \text{for all} \   f,g \in L^0_+(0,1)   \ \text{and} \     \lambda \geq 0;   \label{N1}      \\
&  \|f\|_{X(0,1)}     >  0    \ \text{for every non\hyp{}zero} \ f   \ \text{in} \  L^0_+(0,1);    \label{N2} \\
& \|f\|_{X(0,1)}   \leq   \|g\|_{X(0,1)}  \ \text{whenever} \       f   \leq   g   \ \text{in} \ L^0_+(0,1);        \label{N3}  \\
& \sup \|f_k\|_{X(0,1)}   =   \| f\|_{X(0,1)} \ \text{whenever} \ (f_k)_{k \in \N} \subset L^0_{+}(0,1)    \ \text{with} \   f_k \nearrow f    \ \text{a.e. in}  \   (0,1); \label{N4} \\
& \|1\|_{{X(0,1)}}   <   \infty;  \label{N5}\\
& \text{there exists a positive  constant} \ c \  \text{such that}  \   \int_0^1 f (s)\, \d s  \leq c  \|f\|_{{X(0,1)}} \   \text{for all} \  
     f  \in L^0_{+}(0,1);  \label{N6} \\
& \|f\|_{{X(0,1)}} \, = \, \| g\|_{{X(0,1)}}\   \text{for all} \     f, g \in   L^0_{+}(0,1)    \  \text{such that}  \ f^\ast = g^\ast. \label{N7}  
\end{align}
With any r.i.~function norm $\|\cdot\|_{{X(0,1)}}$, it is associated another functional on $L^0_+(0,1)$, denoted by $\|\cdot\|_{{X'(0,1)}}$, and defined as
\begin{equation} \label{n.assoc.}
\| g\|_{X'(0,1)}=  \sup \left\{   \int_0^1 f (s)g(s)\, \d s  \colon f \in L^0_+(0,1), \| f \|_{X(0,1)} \leq 1\right\} \quad \   \text{for} \ g \in
    L^0_+(0,1).
\end{equation}   
It turns out that $\|\cdot\|_{{X'(0,1)}}$ is also an r.i.~function norm, which is called the \textit{associate function norm of}~$\|\cdot\|_{{X(0,1)}}$.   
From~\eqref{HL.0}, one has that
\begin{equation*}\label{n.assoc.2}
\| g\|_{{X'(0,1)}}=  \sup\left\{\int_0^1 f^\ast (s)g^\ast(s)\, \d s  \colon f \in  L^0_+(0,1),  \| f \|_{X(0,1)} \leq 1 \right\}  \quad \   \text{for} \ g \in L^0_+(0,1).
\end{equation*}   

The \textit{fundamental function} of an r.i.~function norm $\|\cdot\|_{{X(0,1)}}$ is the function  $\phi_X \colon [0, 1] \to [0, \infty)$ defined as
\begin{equation}\label{feb99}\phi_X(t)= \|\chi_{(0,t)}\|_{{X(0,1)}} \quad \   \text{for} \ t \in (0,1] \end{equation} and $\phi_X(0)=0$. By 
  \cite{BS}*{Chapter~2, Corollary~5.3}, the function  $\phi_X$
 is  quasiconcave in $[0, 1]$, continuous except perhaps at the
origin, and satisfies
\begin{equation}\label{ASS_funda} 
\phi_X(t)\,\phi_{X'}(t) = t \quad \ \quad \   \text{for} \ t \in  [0,1].
\end{equation}
 Recall that  a function $\phi \colon [0, 1] \to [0, \infty)$ is called    \textit{quasiconcave} if  it vanishes only at $0$,  $\phi $ and the function   $\overbar{\phi} $, defined as
 \begin{equation}\label{ASS_q-conc} 
\overbar{\phi} (t)  = \frac{t}{\phi (t)} \quad \ \quad \   \text{for} \ t \in  (0,1] 
\end{equation} and $\overbar{\phi}(0)=0$, are both
 non-decreasing on $(0, 1]$. Notice that the function $\overbar{\phi}$ is quasiconcave as well.  \\
Quasiconcave functions need not be concave, but    every quasiconcave function is however always equivalent, up to multiplicative constants, to a concave function \cite{BS}*{Chapter~2, Proposition~5.10}.
 
Given an r.i.~function norm $\|\cdot\|_{{X(0,1)}}$ and a non-negligible Lebesgue measurable set $E$ of finite measure, the {\it rearrangement\hyp{}invariant space} $X(E)$ built upon  $\|\cdot\|_{X(0,1)}$ is defined as the collection of all functions $u \in L^0(E)$ such that the quantity  
\begin{equation}\label{norm}
\|u\|_{{X(E)}}=\|u\sp*(|E| \,\cdot) \|_{{X(0,1)}}
\end{equation}
is finite.
The space $X(E)$ is a Banach space, endowed with the r.i.~ function norm  given by \eqref{norm}, and the space $X(0,1)$ is called the \textit{representation space} of $X(E)$.
Note that the quantity $\|u\|_{{X(E)}}$ is defined for every $u\in L^{0}(E)$, and it is finite if and only if $u\in X(E)$. 
Moreover,
$$X(E) \subset \{u \in L^0(E) \colon u   \ \text{is finite a.e. in} \ E\} .$$

\noindent The \textit{associate space} $X'(E)$ of $X(E)$ is the rearrangement\hyp{}invariant space built upon the r.i.~function norm $\|\cdot \|_{{X'(0,1)}}$.
In particular, $(X')'(E) = X(E)$ (see \cite{BS}*{Chapter~1, Theorem~2.7}), and we will write $X''(E)$ instead of  $(X')'(E)$. 

Given any $\lambda>0$, the \textit{dilation operator} $\textup D_{\lambda}$, defined at each $f\in L^0(0,1)$ by
\begin{equation}\label{E:dilation}  
(\textup D_{\lambda}f)(s)= 
f\left(\frac{s}{\lambda}\right) \chi_{(0, \lambda)}(s) \quad \   \text{for} \ s\in(0,1), 
 \end{equation}
is bounded on every rearrangement\hyp{}invariant~space $X(0,1)$, with operator norm not exceeding $\max\{1, \lambda^{-1}\}$. As a consequence, when $|E| \neq 1$, the norm $\|\cdot\|_{X(E)}$ is equivalent (but possibly different from) more customary norms, because the measure of $E$ may exceed $1$.

If $X(E)$ and $Y(E)$ are rearrangement\hyp{}invariant spaces, we write $X(E)\hookrightarrow Y(E)$ to denote that $X(E)$ is continuously embedded into $Y(E)$, in the sense that there exists a constant $c$ such that $\|u\|_{Y(E)}\le c\|u\|_{X(E)}$ for every $u\in X(E)$.  Note that a property of r.i.~function norms ensures that 
\begin{equation}\label{E:embequ2}  
X(E) \, \hookrightarrow \, Y(E) \quad \ \Longleftrightarrow \quad \ X(E) \subseteq  Y(E).
\end{equation} 
In particular, the equality of sets $X(E)=Y(E)$ implies that the norms of $X(E)$ and $Y(E)$ are equivalent. 
Moreover,
\begin{align}\label{E:embequ2a}
X(E) \, \hookrightarrow \, Y(E) \quad \ &\Longleftrightarrow \quad \ X(0,1)\,  \hookrightarrow \, Y(0,1), \\
\label{E:embequ2b}X(E) \, \hookrightarrow \, Y(E) \quad \ &\Longleftrightarrow \quad \ Y'(E) \, \hookrightarrow \, X'(E),
\end{align}
and the norms of the two embeddings coincide.

A pivotal class of examples of (Banach) rearrangement\hyp{}invariant spaces on $E$ are the Lebesgue spaces $L^p(E)$, $p \in [1, \infty]$. They are generated by the functional 
\begin{equation*}
\|f\|_{{L^p(0,1)}} = \begin{cases}\|f^\ast\|_{{L^p(0,1)}}&\quad  \text{if} \ \, p\in (0,\infty) \\[0.2ex] f^\ast(0^+)&\quad  \text{if} \ \, p=\infty  
\end{cases}
\end{equation*} 
for $f \in L^0_+(0,1)$, which is an r.i.~function norm if and only if $p \in [1, \infty]$. 
Since $|E|<\infty$, one has that
\begin{equation}\label{E:imm}
L^\infty (E) \, \hookrightarrow \, X(E) \, \hookrightarrow \, L^1(E) 
\end{equation} 
for every rearrangement\hyp{}invariant space $X(E)$. 
 
Two different scales of  rearrangement\hyp{}invariant spaces, which generalize, along different directions, the class of Lebesgue spaces are the Orlicz spaces  and the (generalized) Lorentz\hyp{}Zygmund spaces. The latter ones will be discussed in the next section. 

\noindent The Orlicz spaces are generated by the so-called Luxemburg functionals, whose definition, in turn, rests upon that of Young function.  
A \textit{Young function} $A\colon [0, \infty) \rightarrow [0, \infty]$ is a convex, left\hyp{}continuous function such that $A(0)=0$, and is non\hyp{}constant in $(0, \infty)$. The \textit{Luxemburg function norm} associated with a Young function $A$ is defined as
\begin{equation}\label{Orlicz}
\| f \|_{{L^{A}(0,1)}} =  \ \inf \left\{ \lambda > 0 \colon \int_0^1
A \left(\frac{f(s)}{\lambda} \right)\, \d s   \leq  1 \right\} \quad \   \text{for} \ f \in L^0_+(0,1),
\end{equation}
with the convention that $\inf \emptyset = \infty$.
The space $L^{A}(E)$ is called an \textit{Orlicz space} on $E$. Clearly, any Lebesgue space $L^{p}(E)$, with $p \in [1, \infty]$, is reproduced
as an Orlicz space $L^{A}(E)$, with the choice $A(t)=t^p$ when $p \in  [1, \infty)$ and $A(t)=\infty \chi_{_{(1, \infty)}}(t)$  when $p= \infty$. 

\noindent Given two Young functions $A$ and $B$, the Luxemburg function norms $\| \cdot \|_{{L^{A}(0,1)}}$ and $\| \cdot \|_{{L^{B}(0,1)}}$ are equivalent if and only if  $A$ and $B$ are \textit{equivalent near infinity} ($A\simeq B$ near $\infty$, for short), in the sense that there exist positive constants $c$, $C$ and $t_0$  such that
\begin{equation}\label{B.5bis}
A(ct) \leq B(t)\leq A(C t) \quad \ 
\text{for} \ t \in [t_0, \infty).  
\end{equation}
Hence, as $|E|<\infty$, 
\begin{equation}\label{B.6}
 L^A(E)= L^B(E) \ \  \text{ up to equivalent norms} \quad  \Longleftrightarrow \quad  \ A\simeq B \ \    \text{near} \ \infty.
\end{equation} 
Then, we employ the alternative notation $B(L)(E)$ to denote an Orlicz space associated to a Young function $A$ such that $A \simeq B$ near infinity.  
The {\it Orlicz spaces of exponential type},  denoted by $\text{exp}L^a(0,1)$ for $a > 0$, are those built upon the Young function $A(t) = \text{exp}(t^a) - 1$ for $t\in [0, \infty)$. The {\it Orlicz spaces of logarithmic type}, denoted by $L^p(\log L)^\alpha(0,1)$, are built upon a Young function $A$ equivalent to $t^p(1+|\log t|)^a$ near infinity, where either $p=1$, $a \geq 0$ or $p> 1$, $a \in \R$.

\noindent We refer the reader to~\cites{BS, Lubook} for a comprehensive treatment of rearrangement\hyp{}invariant spaces.
 
\subsection{Generalized Lorentz\hyp{}Zygmund spaces}\label{sec2.2} 

\noindent This section is devoted to some basic definitions and properties from the theory of GLZ~spaces. We rely upon the modern foundations of the theory of these spaces developed in the papers \cites{EOP, OP}, to which we refer the reader for more details.

Assume that $p, q\in (0,\infty]$ and $\alpha, \beta \in\R$. Let  $\|\cdot\|_{L^{p, q; \alpha, \beta}(0,1)}$ and $\|\cdot\|_{L^{(p, q; \alpha, \beta)}(0,1)}$ be the functionals   defined on functions $f \in L^{0}_{+}(0,1)$ as
\begin{equation}\label{c:LZfunct}
\|f\|_{L^{p, q; \alpha, \beta}(0,1)} = \left\|s^{\frac{1}{p}-\frac{1}{q}}\ell^\alpha(s)\ell\ell^\beta(s) f^*(s)\right\|_{L^q(0,1)}  
\end{equation}
and   -on replacing $f^{*}$ by $f^{**}$ on the right-hand side of \eqref{c:LZfunct}- 
\begin{equation}\label{c:LZfunct2}
\|f\|_{L^{(p, q; \alpha, \beta)}(0,1)} = \left\|s^{\frac{1}{p}-\frac{1}{q}}\ell^\alpha(s)\ell\ell^\beta(s) f^{**}(s)\right\|_{L^q(0,1)} .
\end{equation} 
Here, 
\begin{equation}\label{c:convLeLL}\ell(s) = 1 + |\log {s}|  \quad \ \text{and}  \quad \  \ell\ell(s) = 1 + \log\ell(s) \quad \  \text{for} \   s\in(0,\infty).\end{equation}  

\noindent  They are equivalent to r.i.~function norms if and only if the parameters $p, q, \alpha$ and $\beta$ fulfill one of the following  alternatives
\begin{equation}\label{c:LZnorm}
\begin{cases}  p=  q=1,  \    \alpha=0, \    \beta\geq 0   \\   p= q=1, \     \alpha>0, \ \beta\in \R   \\  
p\in(1,\infty), \     q\in[1,\infty], \ \alpha,\, \beta\in \R   \\ 
p=\infty, \    q\in[1,\infty], \   \alpha  < - \frac{1}{q}, \   \beta\in \R  \\ 
p=\infty,  \    q\in[1,\infty],  \   \alpha   = - \frac{1}{q}, \      \beta   < - \frac{1}{q}    \\
p=  q=\infty, \      \alpha =  \beta=0   
\end{cases}
\end{equation} 
for $\|\cdot\|_{L^{p, q; \alpha, \beta}(0,1)}$ \cite{OP}*{Theorem~7.4},   and
\begin{equation}\label{c:LZnorm2}
\begin{cases} 
p\in(0,\infty), \ q \in [1, \infty],  \ \alpha, \beta \in \R    \\ 
p=\infty, \  q \in [1, \infty],\  \alpha   < - \frac1{q}, \  \beta \in \R    \\
p=\infty, \  q \in [1, \infty], \ \alpha   = - \frac1{q}, \      \beta   < - \frac{1}{q}    \\
p=  q=\infty,\ \alpha = \beta=0  
\end{cases}
\end{equation}
for  $\|\cdot\|_{L^{(p, q; \alpha, \beta)}(0,1)}$ \cite{OP}*{Theorem~7.5}, respectively.
In these cases, with slight abuse of terminology, the functionals $\|\cdot\|_{L^{p, q; \alpha, \beta}(0,1)}$  and  $\|\cdot\|_{L^{(p, q; \alpha, \beta)}(0,1)}$ are called \textit{generalized  Lorentz\hyp{}Zygmund r.i.~function norms} and the \textit{generalized Lorentz\hyp{}Zygmund spaces} $L^{p, q; \alpha, \beta}(0,1)$ and $L^{(p, q; \alpha, \beta)}(0,1)$  -\textit{GLZ~spaces}, for short- are the corresponding rearrangement\hyp{}invariant spaces on $(0,1)$.

Let us stress that
\begin{equation}\label{c:equivLZspA}L\sp{(p, q; \alpha, \beta)}(0,1)=L\sp{p, q; \alpha, \beta}(0,1)  \quad  \text{if one of the alternatives}  \   (\ref{c:LZnorm2}) \    \text{is in force, with} \   p\in(1,\infty] 
\end{equation} 
\cite{OP}*{Theorem~3.16}, 
whereas
\begin{equation}\label{c:equivLZ2} 
 L\sp{(p, q; \alpha, \beta)}(0,1)=L\sp{1}(0,1)  \quad \   \text{if} \,  
 \begin{cases}  p\in(0,1), \     q\in[1,\infty], \ \alpha,\, \beta\in \R   \\[0.2ex]    p=1, \     q\in[1,\infty], \ \alpha   <-\frac 1q, \  \beta\in \R \\[0.2ex] p=1, \     q\in[1,\infty], \ \alpha =-\frac 1q,  \ \beta <-\frac 1q
 \\[0.2ex] 
p=1, \ q= \infty, \ \alpha =  \beta=0
 \end{cases}  
 \end{equation} 
\cite{OP}*{Lemma~3.15}, up to equivalent norms. Moreover, \begin{equation}\label{c:equivLZ2a} 
 L\sp{(1, 1; \alpha, \beta)}(0,1)=\begin{cases} 
 L^{1, 1;  \alpha+1, \beta}(0,1)&\quad   \text{if} \ \, \alpha   > - 1, \ \beta \in \R  \\[0.2ex] 
 L^{1, 1;  0, \beta+1}(0,1)&\quad   \text{if} \ \,  \alpha   =- 1, \    \beta > - 1   
 \end{cases} \end{equation}
\cite{OP}*{Theorem~3.16}

 It is worth to remark that, due to \eqref{c:equivLZspA}  and \eqref{c:equivLZ2}, as far as the $L^{(p, q; \alpha, \beta)}$ class is concerned, it suffices to deal with the spaces $L^{(1, q; \alpha, \beta)}$ under one of the present alternatives:
 \begin{equation}\label{c:L1}\begin{cases}  q\in[1,\infty], \ \alpha   > - \frac{1}{q}, \ \beta\in \R   \\[0.2ex]  
 q\in[1,\infty], \ \alpha  = - \frac{1}{q}, \ \beta > - \frac{1}{q}  \\[0.2ex] 
   q \in [1, \infty), \ \alpha= \beta = - \frac{1}{q}.  
 \end{cases}   \end{equation} 
 Throughout the paper, with a slight abuse of terminology, we refer to that  as the case $p=1$ for GLZ~spaces, although the GLZ~class   $L^{(1, q; \alpha, \beta)}(\Omega)$, with $q, \alpha, \beta$ fulfilling one of the alternatives \eqref{c:L1}, consists in fact of spaces  which are close, but not equal, to $L^{1}(\Omega)$.
 
In some borderline and limiting embeddings, five\hyp{}parameters  Lorentz\hyp{}Zygmund spaces $L^{p, q; \alpha, \beta, \gamma}(0,1)$ will also emerge. They are built upon the functional 
\begin{equation}\label{3log}
\| f \|_{L^{p, q; \alpha, \beta, \gamma}(0,1)} = \left\| s^{\frac{1}{p}-\frac{1}{q}}\ell^{\alpha}(s)\ell\ell^{\beta}(s) \ell\ell\ell^{\gamma}(s) f^*(s)\right\|_{L^{q}(0,1)} \quad \ \text{for} \ f \in  L^0_+(0,1),
\end{equation} 
where 
\begin{equation}\label{c:convLLL}\ell \ell \ell(s) = 1 + \log \ell \ell(s) \quad \  \text{for} \   s\in(0,1).\end{equation}
When $ q \in [1, \infty]$ and either $p \in (1, \infty)$, $\alpha, \beta, \gamma \in \R$ or $p=\infty$ and 
$$\left\| s^{-\frac{1}{q}}\ell^{\alpha}(s)\ell\ell^{\beta}(s) \ell\ell\ell^{\gamma}(s) \right\|_{L^{q}(0,1)} < \infty ,$$     the functional $\| \cdot \|_{L^{p, q; \alpha, \beta, \gamma}(0,1)}$ is equivalent to an r.i.~function norm  (see e.g.~\cite{PLK}*{Theorem~3.33}).

For the readers' convenience, we  also recall well\hyp{}known characterizations of the associate spaces of GLZ~spaces, that we shall repeatedly employ in our proofs. Specifically,
 \begin{align}
 (L^{p, q; \alpha, \beta})'(0,1) &=  L^{p',q'; -\alpha, -\beta}(0,1) &  \text{if}   &\begin{cases}  \,  p = q = 1,  \ \alpha=0, \  \beta  \geq 0\\   \, p = q =1,  \ \alpha>0,   \ \beta \in \R     
 \\ 
\,  p\in(1,\infty), \ q\in[1,\infty],   \  \alpha, \beta \in \R  
\\ 
\,  p = q =\infty, \    \alpha<0, \ \beta \in \R 
\\ 
\,  p =  q =\infty, \ \alpha=0, \  \beta  \leq 0,
 \end{cases}    \label{c:ASSnorm}
 \\ 
 (L^{\infty, q; \alpha, \beta})'(0,1) &=    L^{(1, q'; -\alpha-1, -\beta)}(0,1)   & \text{if}  & \ \, q\in [1,\infty), \ \alpha<  - \tfrac 1q,   \ \beta \in \R ,         &\label{c:ASSnormA}
\\ 
 (L^{\infty, q; \alpha, \beta})'(0,1) &=   L^{(1, q'; -\frac{1}{q'}, -\beta -1)}(0,1)  &\text{if}   &   \  \, q \in [1,\infty), \ \alpha  =- \tfrac 1q, \   \beta <    - \tfrac 1q      \label{c:ASSnormB}
 \end{align}
 \cite{OP}*{Theorem~6.11}, and
 \begin{equation}\label{c:ASSnorm2}
 (L^{(1, q; \alpha, \beta)})'(0,1) = \begin{cases} 
 L^{\infty, q'; -\alpha-1, -\beta}(0,1)&\quad   \text{if} \ \, q\in[1,\infty], \ \alpha   > - \tfrac 1q, \ \beta \in \R  \\[0.2ex] 
 L^{\infty, q'; -\frac{1}{q'}, -\beta-1}(0,1)&\quad   \text{if} \ \, q \in [1, \infty],  \ \alpha   =- \tfrac 1q, \    \beta > - \tfrac 1q  \\[0.2ex]
  L^{\infty, q'; -\frac{1}{q'}, -\frac{1}{q'}, -1}(0,1)&\quad   \text{if} \ \,q\in [1,\infty), \ \alpha=   \beta   = -   \frac 1q  
 \end{cases} 
 \end{equation}\cite{OP}*{Theorem~6.12}, up to equivalent norms. 
  
As already mentioned in the Introduction, several customary families of function spaces are contained in the GLZ~class. This is the case of  the  Lebesgue spaces,  the Lorentz spaces, the spaces of Brezis\hyp{}Wainger type appearing in \cites{BW, Hansson, Poho, Yudo} in the study of refinements for embedding \eqref{CS1}\textsubscript{1,2} or of all types of exponential and logarithmic Orlicz spaces. Concretely, for $p = q$, the space $L^{p,p, \alpha, \beta}$ is equal to the Orlicz space $L^p(\log L)^\alpha(\log \log L)^\beta$, whenever one of the alternatives in \eqref{c:LZnorm} holds (cf.~\cite{OP}*{Lemma~8.1}). 
In particular, the  Lorentz\hyp{}Zygmund spaces $L^{p,q; \alpha}(0,1)$ are the spaces $L^{p,q;\alpha,0}(0,1)$ (corresponding to $\beta=0$), and the Lorentz spaces $L^{p,q}(0,1)$ are their special cases, corresponding to $\alpha=0$.

\subsection{Endpoint spaces}\label{sec2.3} 
The Marcinkiewicz and the Lorentz endpoint rearrangement-invariant spaces built upon a general quasiconcave function will also come into play in our discussion.

Given a quasiconcave function $\phi \colon [0, 1] \to [0, \infty)$, let  
$\|\cdot\|_{\mathsf{M}_{\phi}(0,1)}$ and $\|\cdot\|_{\Lambda_\phi(0,1)}$ be the functionals   defined on functions $f \in L^{0}_{+}(0,1)$ as
 \begin{equation}\label{EndMarc_no}
\|f\|_{\mathsf{M}_{\phi}(0,1)}= \sup_{s\in (0,1)} \phi(s) f^{**}(s)
\end{equation}	
and
\begin{equation}\label{EndLor_no}
\|f\|_{\Lambda_\phi(0,1)}= \int_0^{1} f^*(s) \,d\phi (s),
\end{equation}
where $d\phi$ stands for the Lebesgue–Stieltjes measure associated with (the non-decreasing function) $\phi$.    
 The functional $\|\cdot\|_{\mathsf{M}_{\phi}(0,1)}$ is an r.i.~function norm on $L^0_+(0,1)$, whereas the functional $\|\cdot\|_{\Lambda_\phi(0,1)}$ is equivalent to an r.i.~function norm thanks to \cite{BS}*{Chapter~2, Propositions~5.10 and 5.11}. If $\phi$ is only quasiconcave,  the functional   $\|\cdot\|_{\Lambda_\phi(0,1)}$
may fail indeed to be subadditive (see e.g.~\cite{Lorentz}). When this lack of concavity occurs,  it suffices to consider the r.i.~function norm $\|\cdot\|_{\Lambda_\phi(0,1)}$  defined with $\phi$ replaced by its least nondecreasing concave majorant $\phi_c$, which fulfills
\begin{equation}\label{EndLor_LNDCM}
\frac12 \phi_c(t) \leq \phi(t) \leq \phi_c(t) \quad \ \quad \   \text{for} \ t \in  (0,1].  	
\end{equation}  
 We shall adopt the convention that $\|\cdot\|_{\Lambda_\phi(0,1)}$ is defined according to this procedure in what follows,
whenever needed.  

The \textit{Marcinkiewicz endpoint space} $\mathsf{M}_{\phi}(0,1)$ and the \textit{Lorentz endpoint space}  $\Lambda_\phi(0,1)$ associated with a quasiconcave function $\phi$ are the corresponding rearrangement\hyp{}invariant spaces on $(0,1)$. Plainly, two  quasiconcave functions which are equivalent (up to multiplicative constants) on $(0,1)$ yield the same space  (up to equivalent norms) in each of   these  cases. In particular,  the GLZ~spaces $ L\sp{(p, \infty; \alpha, \beta)}(0,1)$ defined in Section~\ref{sec2.2} are Marcinkiewicz endpoint spaces associated with the concave function $\phi (s)=s^{\frac{1}{p}}\ell^\alpha(s)\ell\ell^\beta(s)$. 

\noindent The  fundamental functions  of the r.i.~function norms \eqref{EndMarc_no}  and  \eqref{EndLor_no} coincide with the function $\phi$, namely
\begin{equation}\label{EndFF}
\phi_{\mathsf{M}_{\phi}} =\phi   \quad \text{and} \quad   \phi_{\Lambda_{\phi}}= \phi; 
\end{equation}moreover the associate spaces of $\mathsf{M}_{\phi}(0,1)$ and    $\Lambda_\phi(0,1)$ satisfy the following properties:
\begin{equation}\label{Endsp_ASS}
\Lambda'_\phi(0,1)= \mathsf{M}_{\overbar{\phi}}(0,1) \quad \text{and} \quad \mathsf{M}_{\phi}'(0,1)= \Lambda_{\overbar{\phi}}(0,1),
\end{equation} where $\overbar{\phi}$ is the quasiconcave function defined as in \eqref{ASS_q-conc} (see e.g.~\cite{Krein}*{Chapter~2, Section~5}, \cite{Carro}*{Chapter~2, Section~4}).
Furthermore, given any rearrangement\hyp{}invariant space  $X(0,1)$,     
\begin{equation}\label{E:immEPS}
\Lambda_{\phi_{X}}(0,1) \, \hookrightarrow \, X(0,1) \, \hookrightarrow \, \mathsf{M}_{\phi_{X}}(0,1),
\end{equation}
and the spaces $\mathsf{M}_{\phi_{X}}(0,1)$ and $\Lambda_{\phi_{X}}(0,1)$ are respectively the largest and the smallest rearrangement\hyp{}invariant space whose fundamental function is $\phi_{X}$. This accounts for the expression \lq endpoint' which is usually attached to their names.

\subsection{Spaces of Sobolev\hyp{}type and geometric properties of domains}\label{sec2.4}

Let  $\Omega$ be a bounded domain in $\R^n$, with $n \geq 2$.  Given any r.i.~function norm $\|\cdot\|_{X(0,1)}$ and $m \in \mathbb N$, the $m$\hyp{}th order Sobolev\hyp{}type space built on $X(\Omega)$, denoted by $W^mX(\Omega)$, is defined as
\begin{equation}\nonumber 
W^mX(\Omega) = \{ u  \colon   u\    \text{is} \  \text{$m$\hyp{}times}  \,  \text{weakly differentiable in}\ 
 \Omega,   \text{and} \   |\nabla^ku| \in X(\Omega) \  \text{for}  \  k \in \{0, \dots,m\}\}.
 \end{equation} 
 It is a Banach space endowed with the norm 
 \begin{equation}\label{SOB_norm}
 \| u  \|_{W^mX(\Omega)} = \sum_{k=0}^m\|\, | \nabla^k u|\, \|_{X(\Omega)} \quad \ \text{for} \ \, u\in W^m
X(\Omega). 
\end{equation}
 Here,  $\nabla^k u$  stands for the vector of all $k$\hyp{}th weak derivatives of $u$, with the convention that $\nabla^0u =u$, and $|\nabla^k u|$ for its Euclidean norm.

Sobolev embeddings for functions with unrestricted boundary values require some regularity on $\Omega$. The notions of regularity for domains employed hereafter are thus recalled, and a brief discussion of the properties that Sobolev-type spaces inherited from them is presented.

\subsubsection{Spaces of Sobolev\hyp{}type spaces on John domains}\label{Sec:John} A bounded domain $\Omega$ is called a~\textit{John domain} \cites{John,Martio} if there exist a point $x_0 \in \Omega$ and a  constant  $c\in (0,1)$    such that the following internal twisted-cone condition is satisfied: for all points $x \in \Omega$ one can find a rectifiable curve $\gamma_x \colon [0, \ell_x] \to \Omega$, parametrized by  its arc\hyp{}length,   such that $\gamma_x (0)=x$, $\gamma_x (\ell_x) = x_0$ and the distance to the boundary satisfies
 \begin{equation}\label{John}{\rm dist}\, (\gamma_x (r) , \partial \Omega ) \geq c \, r \quad \ \hbox{for} \ r \in [0, \ell_x]. 
 \end{equation}  
 Roughly speaking, a John domain allows to connect every point $x \in \Omega$ to a distinguished point $x_0 \in \Omega$ by a twisted cone whose size is suitable comparable to the Euclidean distance between $x$ and $x_0$.  John domains may have fractal boundaries as well as inward cusps, whereas outward cusps are forbidden.  
 Also, every bounded domain that satisfies the interior cone condition, and thus, in particular, any bounded Lipschitz domain,  is a John domain (see e.g.~\cite{Die}*{Section~7.4}). 

Let us stress that the notion of John domain is invariant under dilation. That is, if $\Omega$ is a  John domain, then so is $\lambda \Omega$ for all $\lambda > 0$. Therefore, as noticed above, we  will consider only John domains whose Lebesgue measure is equal to $1$, without loss of generality.  

When $\Omega$ is a John domain, then~\cite{CPS}*{Theorem~6.2} tells us that the optimal (i.e. smallest possible) rearrangement\hyp{}invariant target space in Sobolev\hyp{}type embeddings for $W^mX(\Omega)$ exists. Moreover, it is built on the r.i.~function norm $\| \cdot \|_{{X_{m, \text{opt}} (0,1)}}$  whose associate function norm is given by
\begin{equation}\label{E:Opt_S}
\|f\|_{{X_{m, \text{opt}}'(0,1)}} = \left\|s\sp{\frac mn}  f\sp{**}(s)\right\|_{{X'(0,1)}} \quad \ \text{for} \ \, f \in L^0_+ (0,1).
\end{equation}
The norm of the embedding
\begin{equation}\label{E:KP3}
W^{m}X(\Omega)\, \hookrightarrow \, X_{m, \text{opt}}(\Omega) 
\end{equation}
depends only on $n, m$ and $\Omega$. Thanks to \cite{KP1}*{Theorem~3.9}, the r.i.~function norm \eqref{E:Opt_S} admits the following (still implicit) equivalent description 
\begin{equation}\label{E:EOPJ2}
\|f\|_{{X_{m, \text{opt}}'(0,1)}} \,  \approx \, \left\|S_mf\right\|_{{X'(0,1)}} \quad \ \text{for} \ \, f \in L^0_+ (0,1),
\end{equation}  
up to multiplicative constants independent of $f$, where 
\begin{equation}\label{E:EOPJC}
S_mf = \sup_{t \in [\cdot,1)} \ t\sp{\frac mn}  f\sp{**}(t)  \quad \    \text{for} \ f \in  L^0_+(0,1).
\end{equation}

\subsubsection{Spaces of Sobolev\hyp{}type spaces on Jones domains}\label{Sec:Jones}
A domain $\Omega$ is called a~\textit{Jones domain} (or an $(\varepsilon, \delta)$\hyp{}\textit{domain})  if there exist  $\varepsilon \in (0,1)$ and $\delta \in (1, \infty]$  such that for all $x,y \in \Omega$, with $|x- y|< \delta$,  there is a rectifiable arc $\gamma$ lying in $\Omega$  and joining  $x$ to $y$ such that
 \begin{equation}\label{E:Jones}
 \ell(\gamma)   \leq   \frac{|x-y|}{\varepsilon}  \quad \ \text{and}  \quad \  {\rm dist}\, (z , \partial \Omega )   \geq  \varepsilon \, \frac{|x-z|\, |y-z|}{|x-y|}  \quad \ \text{for} \ z \in \gamma,
 \end{equation}
 where $\ell(\gamma)$ is the length of $\gamma$.  Roughly speaking, a  Jones domain enables to connect any two of its points   by a double twisted cone whose
size is suitable comparable to their Euclidean distance. 

\noindent Let us warn the reader that Jones domains are also called uniform domains when $\delta > {\rm diam} \, \Omega$ and  locally uniform domains otherwise. 

The class of Jones domains was introduced  in \cite{Jo}, and has been variously characterized (see e.g.~\cites{Die, ED, Va},  and also \cite{Daf}*{Theorem~2.8}). Each bounded Jones domains is a John domain. On the other hand, the Jones class strictly contains domains with minimally smooth boundary  \cite{Stbook}*{Chapter~6, Section~3}  and, in particular, Lipschitz domains, as observed in \cite{Jo}.   

 Jones domains shall come into play in our results of Section~\ref{SS:Holder}.
For Lipschitz domains, well-known classical results  \cites{Cal, Stbook} ($p \in (1, \infty)$ and $p\in \{1,\infty\}$, respectively) guarantee the existence of a bounded linear extension operator $\mathcal E_m\colon W^{m,p}(\Omega)   \to W^{m,p}(\R^n)$ for each $m \in \N $ and all  $p \in [1, \infty]$.  
 In \cite{Jo}, P.W.~Jones introduced the notion of $(\varepsilon, \delta)$-domain (called Jones domains after him)  to  extend that result. In  \cite{Jo}*{Theorem~1}, he   showed that,  for every $(\varepsilon, \delta)$\hyp{}domain, there always exists a bounded linear extension operator $\mathcal E_m\colon W^{m,p}(\Omega)   \to W^{m,p}(\R^n)$ for each $m \in \N $ and all  $p \in [1, \infty]$. Loosely speaking, every Sobolev function on a Jones domain is the restriction of a global Sobolev function (of the same order and exponent). 

 Let us now point out that, when $\Omega$ is a bounded Jones domain, coupling \cite{Jo}*{Theorem~1} with \cite{Monia1}*{Theorem~4.1}  yields the existence of a bounded linear extension operator $\mathcal E_m\colon W^{m}X(\Omega)   \to W^{m}X_e(\R^n)$ for each $m \in \N $ and any rearrangement\hyp{}invariant space $X(\Omega)$. Here, $X_e(\R^n)$ is an extension of $X(\Omega)$. Hence, in accordance with  the authors' remark at the beginning of \cite{Monia1}*{Section~3}, their embedding theorems for $W^mX(\Omega)$ into spaces of continuous functions -that we are going to exploit in Section~\ref{SS:Holder}-, although stated for bounded Lipschitz domains, continue to hold for general bounded Jones domains $\Omega$.
 
\subsection{H\"older  type spaces}\label{sec2.5} 
 A function $\sigma \colon (0, \infty) \to (0, \infty)$ is a {\it modulus of continuity} if it is equivalent, up to positive multiplicative constants, near $0$ to a  non\hyp{}decreasing function and such that
 \begin{equation}\label{E:modulus} 
 \lim_{r\to0^+}\sigma(r)=0 \quad \, \text{and} \quad   \, \liminf_{r\to0^+} \,  \frac{\sigma(r)}{r}>0.
 \end{equation}  
The \textit{H\"older  type space} $C^{0,   \sigma(\cdot)}(\Omega)$, associated to a  modulus of continuity  $ \sigma$,  is the non\hyp{}trivial Banach space of all functions $u\in C^{0}_b(\Omega)$ which make the norm  
\begin{equation}\label{E:Hnorm} 
\|u\|_{C^{0,\sigma(\cdot)}(\Omega)} =  \|u\|_{L^\infty(\Omega)} +   \sup_{\substack{x,y\in\Omega,\\ x\neq y}}\frac{|u(x)-u(y)|}{\sigma(|x-y|)} 
\end{equation} 
finite. Here, $C^{0}_b(\Omega)$ denotes the space of continuous bounded functions with the usual $L^\infty$\hyp{}norm. The classical H\"older space $C^{0, \nu}(\Omega)$ simply corresponds to the choice $ \sigma(r)= r^\nu$, for $\nu \in (0,1]$.
     
Let us emphasize that, since $\Omega$ is bounded, the behavior near $0$ is the only piece of information about a modulus of continuity  $\sigma$ which is needed for the definition of the space $C^{0, \sigma(\cdot)}(\Omega)$.
Moreover, given two moduli of continuity $\sigma, \omega \colon (0, \infty) \to (0, \infty)$, if $\sigma \lesssim \omega$ near $0$, then $C^{0, \sigma(\cdot)}(\Omega) \hookrightarrow C^{0, \omega(\cdot)}(\Omega)$, whence two moduli of continuity which are equivalent (up to multiplicative constants) near $0$ yield the same space (up to equivalent norms).

\subsection{Morrey and Campanato  type spaces}\label{sec2.6} 

A function $\varphi\colon(0,\infty)\to(0,\infty)$  is called \textit{admissible} if
\begin{equation}\label{E:admfunc}
\inf_{r \in [a,\infty)} \varphi(r)>0\quad \ \hbox{for} \ a\in (0,\infty).
\end{equation}
The \textit{Morrey type space}~$\mathcal M^{\varphi(\cdot)}(\Omega)$, associated to an admissible function $\varphi$, is the space of all functions $u\in L^0(\Omega)$  which make the norm
\begin{equation}\label{morreydef}
\|u\|_{\mathcal M^{\varphi(\cdot)}(\Omega)}=\sup_{B\subset \Omega} \, \frac{1}{\varphi(|B|^{\frac{1}{n}})} \, \dashint_{B}|u|\,\d x
\end{equation}
finite, and the \textit{Campanato type space}~$\mathcal L^{\varphi(\cdot)}(\Omega)$, associated to $\varphi$, is the space of all functions  $u\in L^1_{\rm loc}(\Omega)$ such that the semi\hyp{}norm
\begin{equation}\label{E:Campsem}
|u|_{\mathcal L^{\varphi(\cdot)}(\Omega)} = \sup_{B\subset \Omega} \, \frac{1}{\varphi(|B|^{\frac{1}{n}})}\,
 \dashint_{B}|u -u_{B}|\,\d x
\end{equation}
is finite. 
Here, $B$ stands for an open ball in $\Rn$, $\dashint$ for the averaged integral, and $u_B$ denotes the average of $u$ over $B$.

\noindent Let us notice that, when $ \inf  \varphi = 0$, then $\mathcal M^{ \varphi (\cdot)} (\Omega) =  \{0\}$.
In a similar fashion,  $\mathcal L^{ \varphi(\cdot)}(\Omega)$ consists only of locally constant functions when $\liminf_{r\to0^+}  r^{-1}{\varphi(r)} =0$. Indeed, the latter equality implies that $\liminf_{r\to0^+}\varphi(r) =0$, owing to the admissibility of $\varphi$. 

A remark parallel to that made at the end of the previous subsection reads here as follows. The boundedness of $\Omega$ guarantees that the behavior near $0$ is the only piece of information about an admissible function   $\varphi$ which is needed for the definition of both the spaces $\mathcal M^{\varphi(\cdot)}(\Omega)$ and  $\mathcal L^{\varphi(\cdot)}(\Omega)$. Moreover, given two  admissible functions  $\varphi, \widehat \varphi \colon (0, \infty) \to (0, \infty)$, if $\varphi \lesssim \widehat \varphi $ near  $0$, then $\mathcal M^{\varphi(\cdot)}(\Omega) \hookrightarrow \mathcal M^{\widehat\varphi(\cdot)}(\Omega)$ and  $\mathcal L^{\varphi(\cdot)}(\Omega) \hookrightarrow \mathcal L^{\widehat\varphi(\cdot)}(\Omega)$. Consequently, two  admissible functions which are equivalent (up to multiplicative constants) near $0$ yield the same space  (up to equivalent norms) in each of these cases. 

When the admissible function $\varphi$ is a power, namely,  $\varphi (r)=r^\nu$ for some    $\nu \in \R$,  we will simply write $\mathcal M^{ \nu} (\Omega)$ and $\mathcal L^{\nu} (\Omega)$ instead of $\mathcal M^{\varphi (\cdot)} (\Omega)$ and $\mathcal L^{\varphi (\cdot)} (\Omega)$ in the above notation, respectively.
It is worth to point out that the presence of the averaged integral on the right-hand side in \eqref{morreydef} and \eqref{E:Campsem} actually causes a shift by  $n$ in the exponent of the spaces $\mathcal M^\nu (\Omega)$ and $\mathcal L^\nu (\Omega)$ if compared with the customary notation for classical Morrey and Campanato spaces \cites{MOR, Ca0, Ca}. 
One has in fact that $\mathcal L^{\nu}(\Omega)= \mathcal M^{\nu}(\Omega)$ if $\nu<0$,  $\mathcal M^{ \nu} (\Omega)=  {0}$ and $\mathcal L^{\nu}(\Omega)=C^{0,\nu}(\Omega)$ if $\nu>0$, and  $\mathcal M^{0}(\Omega)=L^\infty(\Omega)$ and $\mathcal L^{0}(\Omega)={\rm BMO}(\Omega)$, where ${\rm BMO}(\Omega)$ denotes the John\hyp{}Nirenberg space of functions with bounded mean oscillation in $\Omega$.

\section{Main results}\label{S:MAIN}  

The present section is split into subsections. Each of them will be titled with the target spaces for arbitrary integer order GLZ\hyp{}Sobolev  embeddings involved. Neat explicit forms of their optimal targets are detected. Such forms shall depend  on the domain spaces, namely on whether the $L^{p, q; \alpha, \beta}$  spaces or the GLZ~spaces neighboring $L^{1}$ (i.e. the $L^{(1,q; \alpha, \beta)}$ spaces) are dealt with. Roughly speaking, our main results shall exhibit the net smoothness of functions (with unrestricted boundary values) belonging to $m$\hyp{}th order GLZ\hyp{}Sobolev spaces on bounded domains $\Omega$ of $\R^n$, $n \geq 2$, under  minimal regularity assumptions on $\Omega$. As will be clear from our statements, this net smoothness does depend on a delicate combination of the dimension $n$ and all the parameters $m, p, q, \alpha, \beta$ defining the Sobolev\hyp{}type space in question.  
As a consequence, the gap between the embedding properties of the classical Sobolev spaces (recalled in Section~\ref{Intro}) and those of the Sobolev\hyp{}type spaces built on the broad fine\hyp{}tune class of GLZ~spaces becomes apparent, thus confirming the advantage in the use of the latter framework.

\noindent In what follows we may always assume, without loss of generality, that the Lebesgue measure of the underlying bounded domain $\Omega$ is equal to $1$, as outlined in Section~\ref{sec2.4}.

\subsection{Embeddings into rearrangement\hyp{}invariant spaces}\label{SS:ri}

We begin by focusing on embeddings of $m$\hyp{}order GLZ\hyp{}Sobolev spaces into  rearrangement\hyp{}invariant spaces on John domains $\Omega$ of $\R^n$, $n \geq 2$, and a GLZ\hyp{}Sobolev counterpart of \eqref{CS1} is presented. As pointed out above, the two domain space cases $L^{p, q; \alpha, \beta}(\Omega)$   and $L^{(1,q; \alpha, \beta)}(\Omega)$ translate to essentially different information on the behavior of the GLZ\hyp{}Sobolev functions, and these two cases are tackled in Parts~{\sc(A)} and~{\sc(B)} of Theorem~\ref{T:OptRI}, respectively.

\noindent As mentioned in the Sections~\ref{Intro} and~\ref{Sec:John}, when $\Omega$ is a John domain, the optimal target spaces in this class always exist by virtue of \cite{CPS}*{Theorem~6.2} and they are just implicitly described by \eqref{E:Opt_S}  (see e.g.~\cites{KP3, EKP, Lubos}). 
The GLZ\hyp{}Sobolev spaces provide an appropriate functional framework for their explicit description.
One of the remarkable properties of the GLZ~realm is that the  associate function norms appearing in \eqref{E:Opt_S} can be explicitly computed. 
 
Part~{\sc(A)} of Theorem~\ref{T:OptRI} ensures that the class of generalized Lorentz\hyp{}Zygmund spaces $L^{p, q; \alpha, \beta}(\Omega)$ fails to be closed under the operation of associating an optimal range in Sobolev embeddings only in the exceptional cases  $p=q=1$, $\alpha>0$ and $\beta < -\alpha$ and $p=\frac nm$, $q \in (1, \infty]$, $\alpha=\beta = \frac {1}{q'}$. Indeed, the   optimal rearrangement\hyp{}invariant target spaces for embeddings of $W^mL^{1, 1; \alpha, \beta}(\Omega)$, with $\alpha>0$ and $\beta < -\alpha$, agrees (up to equivalent norms) with the  Lorentz endpoint spaces   $\Lambda_{\overbar{\phi}_{\alpha, \beta}}(\Omega)$  associated with the quasiconcave functions $\overbar{\phi}_{\alpha, \beta}$ explicitly described in \eqref{E:OptRI1bis} in the former cases. In the latter ones,  the   optimal rearrangement\hyp{}invariant target spaces are    five\hyp{}parameters Lorentz\hyp{}Zygmund spaces (or, equivalently, specific instances of Lorentz\hyp{}Karamata spaces, see e.g.~\cite{ED}*{Section~3.4.3}, \cite{PLK}), having different form depending on $q$. To summarize, the fine\hyp{}grained scale of GLZ~spaces allows us to pinpoint when and how the property of its being closed under the operation of associating an optimal target in arbitrary integer order Sobolev breaks down. Loosely speaking, this happens when the GLZ~domain space is \lq very close' either to $L^1$ or to $L^{\frac nm}$ (cf.~\eqref{E:OptRI1}\textsubscript{4} or \eqref{E:OptRI1}\textsubscript{7}).
 
Next, Part~{\sc(B)}  exhibits that the optimal rearrangement\hyp{}invariant target spaces for embeddings of $W^mL^{(1, q; \alpha, \beta)}(\Omega)$ are GLZ~spaces only  when   $q=1$ and either $\alpha >-1$, $\beta \geq - \alpha -1$ or  $\alpha=-1$, $\beta < - \alpha -1$. The explicit description of the optimal rearrangement\hyp{}invariant targets for embeddings of $W^mL^{(1, 1; \alpha, \beta)}(\Omega)$ is also singled out. When $q \in (1, \infty]$,  we are able to offer the corresponding optimal GLZ~target spaces. The explicit description of the optimal (existing) rearrangement\hyp{}in\-va\-ri\-ant target spaces for $q>1$ remains an open problem.  

In the light of \eqref{CS1} and the right-hand side of \eqref{E:imm}, the embeddings in question become trivial for $m \geq n$, since any $m$\hyp{}order GLZ\hyp{}Sobolev space is embedded into $L^\infty(\Omega)$. This means that  we may here restrict ourselves to the case when $m < n$. Then, $p^{\ast}$ stands for the Sobolev conjugate of $p \in \left[1, \tfrac nm \right)$.  

\begin{thm}[Optimal embeddings into rearrangement\hyp{}invariant spaces] \label{T:OptRI}  
Let $\Omega$ be a John domain in $\mathbb R^n$, with  $n \geq 2$.

\vspace{1mm}
    
\noindent {\sc(A)}   Assume that $p, q\in[1,\infty]$, $\alpha, \beta \in \R$ fulfill one of the  alternatives~\eqref{c:LZnorm}.         
If $m \in \N$, with $m < n$, then 
\begin{equation}\label{E:OptRI1} 
W^{m}L^{p, q; \alpha, \beta}(\Omega)  \, \hookrightarrow  \,
\begin{cases} 
L^{p^\ast \!,q; \alpha , \beta}(\Omega)& \quad      \text{if} \,   \begin{cases}p =   q=1, \    \alpha =0, \    \beta \geq 0 \\    p =   q=1, \    \alpha >0, \ \beta \geq - \alpha
\\  
p \in \left(1, \frac nm\right) \!,   \    q\in[1,\infty], \     \alpha,  \beta  \in \R 
\end{cases}   
\\[0.5ex] 
\Lambda_{\overbar{\phi}_{\alpha, \beta}}(\Omega) 
 &\quad    \text{if} \ \, p = q= 1,  \   \alpha   > 0, \ \beta < - \alpha
\\[0.2ex] 
L^{\infty, q; \alpha-1 , \beta}(\Omega) 
 &\quad    \text{if} \ \, p=\frac nm,  \ q\in[1,\infty],  \   \alpha   < \frac {1}{q'}, \ \beta \in \R 
\\[0.2ex] 
 L^{\infty, q; - \frac 1q , \beta-1}(\Omega)  &\quad    \text{if} \ \, p=\frac nm, \ q\in[1,\infty],  \   \alpha   = \frac {1}{q'}, \    \beta < \frac {1}{q'} \\[0.2ex] 
 L^{\infty, q; -\frac 1q , -\frac 1q , -1}(\Omega)  &\quad   \text{if} \ \, p=\frac nm, \ q\in (1,\infty], \   \alpha   =   \beta   =\frac {1}{q'} \\[0.2ex] 
L^{\infty}(\Omega)&\quad        \text{if} \,   
\begin{cases}
p =   \frac nm, \  q=1, \  \alpha   =   \beta   = 0  \\[0.2ex]   
p =   \frac nm,   \  q\in[1,\infty], \   \alpha   = \frac {1}{q'},  \  \beta > \frac {1}{q'}   \\[0.2ex]  
p =   \frac nm, \  q\in[1,\infty],  \   \alpha   > \frac {1}{q'}, \ \beta \in \R \\[0.2ex]   
p \in \left(\frac nm, \infty\right) \!,   \    q\in[1,\infty], \     \alpha,    \beta  \in \R \\[0.2ex] 
p=\infty, \ q\in[1,\infty],     \ \alpha  < - \frac1{q}, \ \beta \in \R \\[0.2ex]
 p=\infty,   \ q\in[1,\infty],     \ \alpha   = - \frac1{q}, \      \beta    < - \frac1{q}    \\[0.2ex]
 p=   q=\infty,  \  \alpha  =   \beta    = 0.  
\end{cases}   
\end{cases}  
\end{equation}
Here, $\overbar{\phi}_{\alpha, \beta}\colon [0, 1] \to [0, \infty)$ is a  quasiconcave function obeying 
\begin{equation}\label{E:OptRI1bis}
\overbar{\phi}_{\alpha, \beta}(t) \, \approx \, t^{1- \frac mn} \left( \log^{\alpha }\left(\frac{e}{t}\right)\log\log^{\beta} \left(\frac{e}{t}\right) \chi_{(0, \xi_{\alpha, \beta}]}(t) +   \chi_{(\xi_{\alpha, \beta},1)}(t) \right)\end{equation} for $t \in [0, 1]$, where $\xi_{\alpha, \beta}=e^{1- e^{-\left(\frac{\beta}{\alpha}+1\right)}}$.

Moreover, the target space is, in each case, the optimal (smallest)   rearrangement\hyp{}invariant space which renders \eqref{E:OptRI1} true. 
 
\vspace{1mm}
    
\noindent {\sc(B)}  Assume that $q\in[1,\infty]$, $\alpha, \beta \in \R$ satisfy one of the   alternatives~\eqref{c:L1}. 
If $m \in \N$, with $m < n$, then     
\begin{equation}\label{E:OptRI2} 
W^{m}L^{(1,q; \alpha, \beta)}(\Omega)  \, \hookrightarrow  \,
\begin{cases} 
L^{1^*\!, q; \alpha + 1, \beta}(\Omega)& \quad      \text{if} \  \,    q=1, \    \alpha > -1, \    \beta \geq - \alpha -1 
\\[0.2ex] 
\Lambda_{\overbar{\phi}_{\alpha+1, \beta}}(\Omega)
 &\quad    \text{if} \ \, q= 1,  \   \alpha   > -1, \ \beta < - \alpha -1
\\[0.2ex] 
L^{1^*\!, 1; 0, \beta+1}(\Omega) 
 &\quad    \text{if} \ \, q=1,  \   \alpha   =-1, \ \beta > -1
\\[0.2ex] 
L^{1^*\!, 1; 0,0, 1}(\Omega)  &\quad   \text{if} \ \,   q=1, \   \alpha   =   \beta   =-1 \\[0.2ex]
L^{1^*\!, q; \alpha + \frac1q, \beta}(\Omega) &\quad    \text{if} \ \,   q\in (1,\infty],  \   \alpha   > - \frac {1}{q}, \    \beta \in \R   \\[0.2ex] 
L^{1^*\!, q; 0, \beta+\frac 1q}(\Omega) &\quad    \text{if} \ \,   q\in (1,\infty],  \   \alpha   = - \frac {1}{q}, \    \beta > -\frac {1}{q} \\[0.2ex] 
L^{1^*\!, q; 0,0,\frac {1}{q}}(\Omega)  &\quad   \text{if} \ \,   q\in (1,\infty), \   \alpha   =   \beta   = -\frac {1}{q} 
\end{cases}     
\end{equation}
where $\overbar{\phi}_{\alpha+1, \beta}$ is the  quasiconcave function described in \eqref{E:OptRI1bis} with $\alpha$ replaced by $\alpha+1$. 

Moreover, the target space  in \eqref{E:OptRI2}\textsubscript{1-4}  is optimal in the class of rearrangement\hyp{}invariant spaces. The target space   in \eqref{E:OptRI2}\textsubscript{5-6} is optimal in the class of GLZ~spaces and the target space in \eqref{E:OptRI2}\textsubscript{7} is optimal in the class of five\hyp{}parameter GLZ~spaces.
\end{thm}

Part~{\sc(A)} recovers and embraces, in particular, the  improvements of the classical embeddings \eqref{CS1}\textsubscript{1,2}  of \cites{Poho, TRU, Yudo} proved by   \cites{ONeil, Peetre} ($1 \leq p=q < \frac nm$, $\alpha= \beta=0$), \cites{BW, Hansson} ($p=q= \frac nm$, $\alpha= \beta=0$) 
as well as \cite{EKP}*{Section~7} ($p=q \in \left(1, \frac nm\right)$, $\alpha= \beta= 0$, and $p=q=\frac nm$, either $\alpha<0$, $ \beta= 0$, or $ \alpha=0$, $\beta=-\frac{1}{n'}$), 
\cite{CP_trans}*{Example~5.6} (either $p=q= 1$, $\alpha \geq 0$ or $p=q> 1$, $\alpha \in \R$, and $\beta=0$), and \cite{CM}*{Theorem~5.1} ($\beta=0$). 

The conclusions that are derived in Part~{\sc(B)} are new, as far as we know, even for smooth domains in the simplest case $\beta=0$. It is worth noting here that  the embeddings  \eqref{E:OptRI2}\textsubscript{1-3} follows from \eqref{E:OptRI1}\textsubscript{2}, \eqref{E:OptRI1}\textsubscript{4} and \eqref{E:OptRI1}\textsubscript{1}, respectively, via  \eqref{c:equivLZ2a}.
 \begin{remark}\label{R:OptRI}  \rm The precise description of the optimal rearrangement\hyp{}invariant target  space for $W^{m}L^{(1,q; \alpha, \beta)}(\Omega)$ for $m<n$ and $q>1$ remains a   challenging open problem. A useful piece of information is, however, actually at hand thanks to some key steps of our proof of Theorem~\ref{T:OptRI}~{\sc(B)} below (cf.~Section~\ref{S:ORI}), namely, \begin{align}
&L^{1^*\!, q;  \alpha + 1,  \beta} (\Omega)\,  \subsetneqq \, (L^{(1,q; \alpha, \beta)})_{m, \text{opt}}(\Omega) \,  \subsetneqq \, L^{1^*\!, q;  \alpha + \frac 1q,  \beta} (\Omega) && \quad    \text{if} \ \,     \alpha   > - \frac {1}{q}, \    \beta \in \R \label{XE:optemb1}\\[0.2ex]
&L^{1^*\!, q;  \frac{1}{q'}, \beta + 1} (\Omega)  \,  \subsetneqq \,(L^{(1,q; -\frac 1q, \beta)})_{m, \text{opt}}(\Omega)  \,  \subsetneqq \,   L^{1^*\!, q; 0, \beta +   \frac 1q} (\Omega)  
&& \quad    \text{if} \ \,      \beta  > - \frac {1}{q} \label{XE:optemb2}\\[0.2ex]
 &L^{1^*\!, q;   \frac{1}{q'},  \frac{1}{q'}, 1} (\Omega) \,  \subsetneqq \, (L^{(1,q; -\frac 1q, -\frac 1q)})_{m, \text{opt}}(\Omega)  \,  \subsetneqq \,  L^{1^*\!, q; 0, 0,   \frac 1q} (\Omega)  && \quad    \text{if} \ \, q \in (1, \infty),\label{XE:optemb3}
\end{align} and each space of the  rightmost side of \eqref{XE:optemb1}-\eqref{XE:optemb3} is optimal in the pertaining scale for the latter inclusion to hold.
\end{remark} 

\subsection{Embeddings into    spaces of continuous functions}\label{SS:Holder}

In this section, in contrast with the others, we will assume that the  domain $\Omega$ fits in the Jones class. The assumption that $\Omega$ is merely a John domain is not sufficient for Sobolev embeddings into spaces of continuous functions to hold, even classically (see e.g.~\cite{MaPobook}). With regard to this, consider, for example, $\Omega=\{(\rho,\theta)\colon \rho \in (1,2), \theta \in (0,2\pi)\}$ and take the function $u(\rho,\theta)=\theta$ for $(\rho,\theta) \in \Omega$. Then, $u\in W^{m,p} (\Omega)$ for all $m \in \N$ and $p \in [1, \infty]$, but $u \not \in C^{0}_b(\Omega)$. Here, $\Omega$ satisfies the cone property (hence it is a John domain), but it does not belong to the Jones class. 

As we pointed out in Section~\ref{Sec:Jones}, the geometry of bounded Jones domains $\Omega$ is \lq good' enough to assure a version of the Jones extension theorem for functions belonging to the Sobolev\hyp{}type space $W^{m}X(\Omega)$ for each $m \in \N $ and any rearrangement\hyp{}invariant space $X(\Omega)$. Hence, all the main results of \cite{Monia1}*{Section~3} that we are going to exploit in the present section, although stated for bounded Lipschitz domains, continue to hold for general bounded Jones domains $\Omega$.

First, we seek for continuity properties of $m$\hyp{}order GLZ\hyp{}Sobolev functions. The only case of interest is when $m$   is strictly less than the dimension $n$ of the Euclidean space. Indeed,  both $W^{m}L^{p, q; \alpha, \beta}(\Omega)$ and $W^{m}L^{(1, q; \alpha, \beta)}(\Omega)$ embed  in $C^{0}_b(\Omega)$ for $m \geq n$, on account of   \eqref{CS2} and the second embedding in \eqref{E:imm}. When $m<n$, then
 \cite{Monia1}*{Theorem~3.1} tells us that the embedding $W^{m}X (\Omega)  \hookrightarrow  C^{0}_b(\Omega)$  holds for some rearrangement\hyp{}invariant space $X (\Omega)$ if and only if $X(\Omega)  \hookrightarrow L^{\frac nm,1}(\Omega)$. Henceforth, well\hyp{}known embeddings among GLZ~spaces \cite{OP}*{Theorems~4.5,~4.6 and~5.5} provides us with the following sharp complete answer to the problem in question.
 
\begin{thm}[Embeddings into $C^{0}_b(\Omega)$] \label{T:C0}  
Let $\Omega$ be a  bounded Jones domain in $\mathbb R^n$, with $n \geq 2$, and $m \in \N$, with $m <n$. 
       
\vspace{1mm}
    
\noindent {\sc(A)}    Assume that  $p, q\in[1,\infty]$, $\alpha, \beta \in \R$ fulfill one of the alternatives~\eqref{c:LZnorm}. Then,  
\begin{equation}\label{E:C0p}
W^{m}L^{p, q; \alpha, \beta}  (\Omega)  \, \hookrightarrow \, C^{0}_b(\Omega)  \quad \text{if and only if} \quad  \begin{cases}  
p=\frac nm, \   q=1, \    \alpha  =   \beta = 0    \\[0.2ex]       p=\frac nm, \  q\in[1,\infty], \  \alpha   = \frac{1}{q'}, \   \beta   > \frac{1}{q'}  \\[0.2ex] 
p=\frac nm,  \  q\in[1,\infty], \    \alpha   > \frac{1}{q'}, \ \beta \in \R  \\[0.2ex]    
p \in \left(\frac nm, \infty\right]  
\end{cases} 
\end{equation} 
   
\vspace{1mm}
      
\noindent {\sc(B)}   If $ q\in [1,\infty]$, $\alpha, \beta \in \R$ satisfy one of the alternatives~\eqref{c:L1}, then 
\begin{equation}\label{E:C0*}
W^{m}L^{(1, q; \alpha, \beta)} (\Omega)  \, \not \hookrightarrow \, C^{0}_b(\Omega). 
\end{equation} 
\end{thm}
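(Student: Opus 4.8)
The plan is to derive both parts from the rearrangement‑invariant embeddings already available in Theorem~\ref{T:OptRI}, using the trivial inclusion $C^0_b(\Omega)\hookrightarrow L^\infty(\Omega)$ to pass from continuity to boundedness, and, for the positive direction of Part~{\sc(A)}, to upgrade boundedness back to continuity through the Jones geometry of $\Omega$. The auxiliary observation underpinning everything is that, for a bounded domain containing a ball, $W^{m}X(\Omega)\hookrightarrow L^\infty(\Omega)$ holds if and only if $s^{\frac mn-1}\in X'(0,1)$: sufficiency follows from the Sobolev integral representation together with the Hardy--Littlewood inequality, since $\|u\|_{L^\infty}\lesssim\int_0^1 s^{\frac mn-1}(\nabla^m u)^*(s)\,\d s\le\|s^{\frac mn-1}\|_{X'(0,1)}\,\|\nabla^m u\|_{X(\Omega)}$, and necessity from a standard potential‑type function with a power singularity supported in a small ball; the same dichotomy is encoded in \eqref{E:OptRI1} as the rows where the optimal r.i.\ target degenerates to $L^\infty$ (cf.\ \cite{CPS}).

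\emph{Part~{\sc(A)}, necessity.} If $W^{m}L^{p,q;\alpha,\beta}(\Omega)\hookrightarrow C^0_b(\Omega)$, then composing with $C^0_b(\Omega)\hookrightarrow L^\infty(\Omega)$ forces $s^{\frac mn-1}\in(L^{p,q;\alpha,\beta})'(0,1)$; inserting the associate‑norm descriptions \eqref{c:ASSnorm}--\eqref{c:ASSnormB} and carrying out the elementary power/log integrability analysis near $0$ produces exactly the four alternatives in \eqref{E:C0p}. Equivalently, $W^{m}L^{p,q;\alpha,\beta}(\Omega)\hookrightarrow L^\infty(\Omega)$ forces the optimal r.i.\ target of Theorem~\ref{T:OptRI}{\sc(A)} to coincide with $L^\infty(\Omega)$ (by \eqref{E:imm}), which by inspection of \eqref{E:OptRI1} occurs precisely in those four cases, once one checks -routinely- that each of the four non‑$L^\infty$ targets appearing in \eqref{E:OptRI1} carries genuinely unbounded functions. \emph{Part~{\sc(A)}, sufficiency.} Under any alternative in \eqref{E:C0p} we already have $W^{m}L^{p,q;\alpha,\beta}(\Omega)\hookrightarrow L^\infty(\Omega)$; to gain continuity I would use that a bounded Jones domain is a Sobolev extension domain (\cite{Jo}), extend $u$ to a cube $Q\supset\overline\Omega$ with control of the norm, and on $Q$ combine the Sobolev integral representation $u=P+\int_Q K_m(\cdot,y)\,\nabla^m u(y)\,\d y$ (with $|K_m(x,y)|\lesssim|x-y|^{m-n}$) with $s^{\frac mn-1}\in(L^{p,q;\alpha,\beta})'(0,1)$ and with the vanishing of $\|s^{\frac mn-1}\chi_{(0,r^n)}\|_{(L^{p,q;\alpha,\beta})'(0,1)}$ as $r\to0$ (valid in the regimes \eqref{E:C0p}) to obtain a uniform modulus‑of‑continuity bound for $u$ on $Q$, hence on $\Omega$. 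Alternatively one invokes the Sobolev embeddings into Hölder type spaces treated in Section~\ref{SS:Holder}: \eqref{E:C0p} is exactly the parameter range in which the associated modulus of continuity fulfils \eqref{E:modulus}, so that $W^{m}L^{p,q;\alpha,\beta}(\Omega)\hookrightarrow C^{0,\sigma(\cdot)}(\Omega)\hookrightarrow C^0_b(\Omega)$, the last inclusion being built into \eqref{E:Hnorm}.

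\emph{Part~{\sc(B)}.} Since $C^0_b(\Omega)\hookrightarrow L^\infty(\Omega)$, it suffices to prove $W^{m}L^{(1,q;\alpha,\beta)}(\Omega)\not\hookrightarrow L^\infty(\Omega)$. The quickest route: by Theorem~\ref{T:OptRI}{\sc(B)} (for $q=1$) and by \eqref{piece} in Remark~\ref{R:OptRI} (for $q>1$) the optimal r.i.\ target of $W^{m}L^{(1,q;\alpha,\beta)}(\Omega)$ contains a GLZ~space with finite first index $1^\ast=\tfrac{n}{n-m}$, hence contains unbounded functions; being the smallest r.i.\ target, it cannot be contained in $L^\infty(\Omega)$, so the Sobolev space is not embedded into $L^\infty(\Omega)$. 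A self‑contained alternative is to display the witness $u(x)=\eta(x)\,|x-x_0|^{-\gamma}$ with $x_0\in\Omega$, $\eta$ a smooth cutoff equal to $1$ near $x_0$ and supported in a small ball of $\Omega$, and $\gamma\in(0,n-m)$ (a nonempty range, as $m<n$): then $u$ is $m$‑times weakly differentiable in $\Omega$ and unbounded, while $(|\nabla^k u|)^*(s)\lesssim s^{-(\gamma+k)/n}$ (up to lower‑order terms) stays finite in the functional \eqref{c:LZfunct2} defining $L^{(1,q;\alpha,\beta)}$ for every $k\le m$, precisely because $(\gamma+m)/n<1$.

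The only genuinely non‑routine step is the sufficiency half of Part~{\sc(A)}: upgrading $L^\infty$ to $C^0_b$ is not a rearrangement‑invariant phenomenon and cannot be read off Theorem~\ref{T:OptRI}, so one must either run the extension plus integral‑representation argument above -whose delicate ingredient is the vanishing of the associate norm of the potential tail on small sets, which must be verified case by case in the regimes \eqref{E:C0p}- or borrow the Hölder type embeddings of Section~\ref{SS:Holder} and check that \eqref{E:C0p} is exactly their "continuous" regime. Everything else -the matching of \eqref{E:C0p} with the $L^\infty$ rows of \eqref{E:OptRI1}, and the unboundedness of the auxiliary GLZ~spaces- is elementary bookkeeping with the associate‑norm formulas of Section~\ref{sec2.2}.
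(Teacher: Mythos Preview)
The paper takes a shorter and structurally different route: it invokes \cite[Theorem~3.1]{Monia1}, which supplies the characterization $W^mX(\Omega)\hookrightarrow C^0_b(\Omega)$ if and only if $X(\Omega)\hookrightarrow L^{\frac nm,1}(\Omega)$, valid for any r.i.\ space $X$ on a bounded Jones domain, and then checks the latter embedding within the GLZ scale via \cite[Theorems~4.5, 4.6 and 5.5]{OP}. Boundedness and continuity are thus handled simultaneously by one external result; neither Theorem~\ref{T:OptRI} nor a separate continuity upgrade enters.

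Your route through Theorem~\ref{T:OptRI} works for necessity and for Part~\textsc{(B)}, but the sufficiency half of Part~\textsc{(A)} has a genuine gap at the borderline case $p=\tfrac nm$, $q=1$, $\alpha=\beta=0$. Both continuity upgrades you propose fail there. First, the claimed vanishing of $\|s^{\frac mn-1}\chi_{(0,r^n)}\|_{X'(0,1)}$ is false: for $X=L^{\frac nm,1}$ one has $X'=L^{\frac n{n-m},\infty}$ and $\|s^{\frac mn-1}\chi_{(0,r)}\|_{L^{\frac n{n-m},\infty}}=\sup_{0<s<r}s^{1-\frac mn}\cdot s^{\frac mn-1}=1$ for every $r\in(0,1]$. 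Second, the H\"older embeddings of Theorem~\ref{T:HOp} do not cover this case either, exactly as recorded in Remark~\ref{SiCnoH}. In fact no uniform modulus of continuity can exist here, so any argument aimed at producing one is doomed. The repair is standard but different from what you wrote: once $W^mX(\Omega)\hookrightarrow L^\infty(\Omega)$ is in hand, continuity of each $u$ follows from density of smooth functions in $W^mX(\Omega)$ (available on Jones domains via extension and mollification) together with uniform convergence; alternatively, in the potential estimate one uses that $s^{\frac mn-1}(\nabla^m u)^*(s)\in L^1(0,1)$---which is the very meaning of $\nabla^m u\in L^{\frac nm,1}$---so the near-field integral $\int_0^{c|x-x'|^n}s^{\frac mn-1}(\nabla^m u)^*(s)\,\d s$ tends to $0$ by dominated convergence, yielding continuity without any uniform modulus.
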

 
Next, we provide the explicit form of the (existing) optimal modulus of continuity for Sobolev\hyp{}type embedding  \eqref{EmbHolder} in Section~\ref{Intro} within the GLZ~class. This optimal modulus  of continuity takes a distinct form according to whether $m=1$, $m \in \{2, \dots,  n-1\}$ and $m=n$.   Plainly, such a problem is relevant only for $m \leq n$, since functions in $ W^{m}L^{p, q; \alpha, \beta}(\Omega)$ and $ W^{m}L^{(1, q; \alpha, \beta)}(\Omega)$ are  Lipschitz for $m > n$, as a consequence of \eqref{CS3}\textsubscript{3} and the second embedding in \eqref{E:imm}. 

The explicit optimal $L^{p, q;\alpha, \beta}$\hyp{}Sobolev counterpart of \eqref{CS3} is the content of the following theorem.  
  
\begin{thm}[Optimal embeddings into H\"older  spaces - case $p>1$] \label{T:HOp}  
Let  $\Omega$ be a bounded Jones domain in $\mathbb R^n$, with $n \geq 2$. Assume that $p,q\in[1,\infty]$, $\alpha, \beta \in \R$     fulfill one of the   alternatives~\eqref{c:LZnorm}.
       
\vspace{1mm}
     
\noindent {\sc(I)}   The embedding      
\begin{equation}\label{E:HOpA} 
W^{1}L^{p, q; \alpha, \beta}  (\Omega) \, \hookrightarrow \, C^{0, \widehat \sigma_1(\cdot)}(\Omega) 
\end{equation} 
holds, where  $\widehat \sigma_1\colon  (0, \infty) \to (0, \infty)$ obeys
\begin{align}\label{E:HOpAa} 
\widehat \sigma_1(r)  \approx 
\begin{cases}
\log\log^{  \frac {1}{q'} - \beta} \! \left(\frac{e}{r}\right) 
 &\quad  \text{if} \ \,p=n, \   q\in[1,\infty], \        \alpha  = \frac {1}{q'}, \   \beta  >  \frac {1}{q'} \\[0.2ex]  
 \log^{\frac {1}{q'}  -\alpha }\left(\frac{e}{r}\right)\log\log^{- \beta} \left(\frac{e}{r}\right) 
 &\quad  \text{if} \ \,  p=n,  \   q\in[1,\infty], \    \alpha   > \frac {1}{q'}, \ \beta \in \R  \\[0.2ex]
 r^{1-\frac np}\log^{ -\alpha} \!\left(\frac{e}{r}\right)\log\log^{- \beta} \!\left(\frac{e}{r}\right) 
 &\quad  \text{if} \ \,  p \in (n, \infty), \   q\in[1,\infty], \    \alpha,  \beta \in \R \\[0.2ex]  
  r\log^{-\frac 1q -\alpha} \!\left(\frac{e}{r}\right)\log\log^{- \beta} \! \left(\frac{e}{r}\right)
& \quad      \text{if} \ \, p=\infty,  \ q \in [1, \infty], \ \alpha   < -\frac 1q, \   \beta \in \R  \\[0.2ex] 
r \log\log^{- \frac 1q - \beta} \!\left( \frac{e}{r}\right) 
& \quad   \text{if} \  \ p=\infty,   \   q\in[1,\infty], \ \alpha   =- \frac{1}{q}, \     \beta < - \frac{1}{q} \\[0.2ex] 
 r 
& \quad  \text{if} \ \,p=q=\infty,   \   \alpha  =   \beta   = 0   
\end{cases} 
\end{align}
near $0$. 
Moreover, the function  $\widehat \sigma_1$  is (up to multiplicative constants) the optimal modulus of continuity in the embedding~\eqref{E:HOpA}, in the sense that, if there exists a modulus of continuity  $ \sigma$ such that   $ W^{1}L^{p, q; \alpha, \beta}  (\Omega) \hookrightarrow C^{0,  \sigma(\cdot)}(\Omega)$, then $C^{0,  \widehat \sigma_1(\cdot)}(\Omega) \hookrightarrow  C^{0, \sigma(\cdot)}(\Omega)$. 

\noindent In addition, no embedding of $W^{1}L^{p, q; \alpha, \beta}  (\Omega)$ into spaces of the form $C^{0, \sigma(\cdot)}(\Omega)$  holds for the values of the parameters $p, q, \alpha, \beta$ non included in the formula~\eqref{E:HOpAa}.

 \vspace{1mm}
 
\noindent {\sc(II)}   If   $m \in \{2, \dots,  n-1\}$, then  
\begin{equation}\label{E:HOpB} 
W^{m}L^{p, q; \alpha, \beta}  (\Omega) \, \hookrightarrow \, C^{0, \widehat \sigma_m(\cdot)}(\Omega),
\end{equation}
where $\widehat \sigma_m\colon    (0, \infty) \to (0, \infty)$ obeys
\begin{align}\label{E:HOpBb} 
\widehat \sigma_m(r)  \, \approx \, 
\begin{cases}
\log\log^{\frac {1}{q'} - \beta} \! \left(  \frac{e}{r}\right) 
&\quad  \text{if} \ \, p=\frac nm, \   q\in[1,\infty], \  \alpha  = \frac {1}{q'}, \    \beta  >  \frac {1}{q'}\\[0.2ex]
\log^{\frac {1}{q'}  -\alpha }\! \left(\frac{e}{r}\right)   \log\log^{- \beta}\!\left(\frac{e}{r}\right) 
 &\quad   \text{if} \ \, p=\frac nm,  \   q\in[1,\infty], \    \alpha   > \frac {1}{q'}, \ \beta \in \R   \\[0.2ex] 
r^{m-\frac np}\log^{ -\alpha} \! \left(\frac{e}{r}\right)   \log\log^{- \beta} \! \left( \frac{e}{r}\right) 
 &\quad   \text{if} \ \,  p \in \left(\frac nm, \frac n{m-1}\right)\! , \   q\in[1,\infty], \    \alpha,     \beta \in \R  \\[0.2ex] 
r \, \log^{\frac {1}{q'} -\alpha} \! \left(\frac{e}{r}\right)   \log\log^{- \beta} \! \left(  \frac{e}{r}\right) 
 &\quad   \text{if} \ \,  p = \frac n{m-1}, \   q\in[1,\infty], \    \alpha  <\frac {1}{q'}, \    \beta \in \R
 \\[0.2ex] 
  r \, \log  \log^{\frac {1}{q'}- \beta} \! \left(  \frac{e}{r}\right) &\quad   \text{if} \ \,  p = \frac n{m-1}, \   q\in[1,\infty], \    \alpha  =\frac {1}{q'}, \   \beta  < \frac {1}{q'} \\[0.2ex] 
  r  \, \log\log\log^{\frac {1}{q'}} \! \left( \frac{e}{r}\right) 
& \quad     \text{if} \ \,  p = \frac n{m-1}, \   q\in (1,\infty], \    \alpha  =   \beta  = \frac {1}{q'} \\[0.2ex] 
r  
& \quad     \text{if}  \,  \begin{cases}p = \frac n{m-1}, \   q=1, \    \alpha  =   \beta  = 0  \\[0.2ex]
p = \frac n{m-1}, \    q\in[1,\infty], \     \alpha =\frac {1}{q'}, \    \beta  >  \frac {1}{q'} \\[0.2ex]
p = \frac n{m-1}, \    q\in[1,\infty], \     \alpha  >\frac {1}{q'}, \    \beta  \in \R\\[0.2ex]
p \in \left(\frac n{m-1}, \infty\right) \!, \    q\in[1,\infty], \     \alpha,  \beta \in \R \\[0.2ex]
p=\infty, \ q\in[1,\infty], \ \alpha  < - \frac1{q}, \ \beta \in \R \\[0.2ex]   
p=\infty,   \ q\in[1,\infty], \ \alpha   = - \frac1{q}, \      \beta    < - \frac1{q}    \\[0.2ex]
p=   q=\infty,  \  \alpha  =   \beta    = 0.  
\end{cases}  
\end{cases} 
\end{align}
near $0$.  
Moreover, the function $\widehat \sigma_m$ is (up to multiplicative constants) the optimal modulus of continuity in the embedding~\eqref{E:HOpB}.

\noindent In addition, no embedding of $W^{m}L^{p, q; \alpha, \beta}(\Omega)$ into spaces of the form $C^{0,\sigma(\cdot)}(\Omega)$ holds for the values of the parameters $p, q, \alpha, \beta$ non included in the formula \eqref{E:HOpBb}.

 \vspace{1mm}
 
\noindent {\sc(III)}   The embedding     
\begin{equation}\label{E:HOpC} 
W^{n}L^{p, q; \alpha, \beta}  (\Omega) \, \hookrightarrow \, C^{0, \widehat \sigma_n(\cdot)}(\Omega) 
\end{equation} 
holds, where $\widehat \sigma_n\colon    (0, \infty) \to (0, \infty)$ obeys
\begin{equation}\label{E:HOpCc} 
\widehat \sigma_n(r) \,  \approx \, 
\begin{cases}            
r^{n-\frac np}   \log^{-\alpha} \! \left(\frac{e}{r}\right)   \log\log^{- \beta} \!  \left(\frac{e}{r}\right) 
&  \quad \   \text{if}     \, \begin{cases}  p= q=1, \    \alpha=0,\  \beta >0 \\  p= q=1, \       \alpha>0, \ \beta \in \R\\    p \in  \left(1, \frac n{n-1} \right) \!, \ q\in[1,\infty], \    \alpha,   \beta \in \R  
\end{cases}     \\[0.2ex] 
r \, \log^{\frac {1}{q'} -\alpha} \! \left(\frac{e}{r}\right)       \log\log^{- \beta} \!   \left(\frac{e}{r}\right)
& \quad \ \text{if} \ \,   p=\frac n{n-1}, \ q\in[1,\infty], \ \alpha   < \frac {1}{q'}, \  \beta \in \R      
 \\[0.2ex] 
r \,   \log\log^{\frac {1}{q'}   -\beta} \!   \left(\frac{e}{r}\right) 
&\quad \ \text{if} \ \,   p=\frac n{n-1}, \ q\in[1,\infty], \   \alpha   = \frac {1}{q'}, \      \beta  <    \frac {1}{q'}  \\[0.2ex] 
r \,   \log\log\log^{\frac {1}{q'}} \! \left(  \frac{e}{r}\right) &\quad \ \text{if} \ \, p=\frac n{n-1}, \  q \in (1, \infty], \  \alpha  =   \beta = \frac {1}{q'}    \\[0.2ex] 
 r  &\quad \ \text{if}    \, \begin{cases}p=\frac n{n-1}, \    q=1, \    \alpha =    \beta    =   0 \\ 
 p=\frac n{n-1}, \ q\in[1,\infty], \  \alpha  =\frac {1}{q'},   \   \beta  > \frac {1}{q'}   \\[0.2ex]   
 p=\frac n{n-1}, \ q\in[1,\infty], \    \alpha  >\frac {1}{q'}, \ \beta \in \R \\[0.2ex] 
 p \in \left(\frac n{n-1}, \infty\right) \!,  \ q\in[1,\infty], \    \alpha,    \beta \in \R  \\[0.2ex] 
 p=\infty, \ q\in[1,\infty],  \ \alpha  < - \frac1{q}, \ \beta \in \R \\[0.2ex]
 p=\infty,   \ q\in[1,\infty],  \ \alpha   = - \frac1{q}, \      \beta    < - \frac1{q}    \\[0.2ex]  
 p=   q=\infty,  \  \alpha  =   \beta    = 0  
\end{cases}  
\end{cases} 
\end{equation}
near $0$. 
Moreover, the function $\widehat \sigma_n$  is (up to multiplicative constants) the optimal modulus of continuity in the embedding~\eqref{E:HOpC}. 
\end{thm}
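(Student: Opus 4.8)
The plan is to reduce all three parts to one one-dimensional computation and then to carry it out case by case. The engine is the reduction principle for Sobolev embeddings (see \cite{CPS} and the technical results of Section~\ref{secTec}), together with the fact that on a bounded Jones domain the smallest admissible H\"older target of a \emph{first}-order Sobolev space built on an r.i.\ space $Z$ is governed by the associate norm $\|\cdot\|_{Z'(0,1)}$. Concretely, for $1\le m\le n$ put $Z:=\bigl(L^{p,q;\alpha,\beta}\bigr)_{m-1,\mathrm{opt}}$, the optimal r.i.\ range of $W^{m-1}L^{p,q;\alpha,\beta}(\Omega)$ (with $Z=L^{p,q;\alpha,\beta}$ when $m=1$). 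Then $W^{m}L^{p,q;\alpha,\beta}(\Omega)\hookrightarrow C^{0,\sigma(\cdot)}(\Omega)$ if and only if $W^{1}Z(\Omega)\hookrightarrow C^{0,\sigma(\cdot)}(\Omega)$, if and only if $\sigma\gtrsim\widehat\sigma_m$ near $0$, where
\[
\widehat\sigma_m(r)\approx\bigl\|s^{\frac1n-1}\chi_{(0,r^{n})}(s)\bigr\|_{Z'(0,1)},
\]
provided the right-hand side is finite and tends to $0$ near $0$; the last equivalence is the one-dimensional content of the Morrey-type pointwise representation for $W^{1}Z$-functions on a Jones domain, matched from below by a competitor of prescribed decreasing rearrangement (transplanted through a Jones extension operator). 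The point of routing through $Z$ is that, by Theorem~\ref{T:OptRI}(A) applied at order $m-1$, $Z$ is \emph{explicit}: a Lorentz--Zygmund space $L^{\widetilde p,q;\alpha,\beta}$ with $\widetilde p=\tfrac{np}{n-(m-1)p}$ when $p<\tfrac{n}{m-1}$, a space of type $L^{\infty,q;\,\cdot\,,\,\cdot\,}$ when $p=\tfrac{n}{m-1}$, and $L^{\infty}$ when $p>\tfrac{n}{m-1}$ (or $p=\tfrac n{m-1}$ with enough logarithmic room); hence, via \eqref{c:ASSnorm}--\eqref{c:ASSnorm2}, $Z'(0,1)$ is respectively a Lorentz--Zygmund space, a space of $L^{(1,\,\cdot\,)}$-type, or $L^{1}$, all in closed form.

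With $Z'(0,1)$ identified, evaluating $\|s^{1/n-1}\chi_{(0,r^{n})}\|_{Z'(0,1)}$ is an elementary one-variable estimate, which I would take from the second technical result of Section~\ref{secTec}: for $a>0$ and slowly varying $L$ one has $\int_0^{\rho}s^{a-1}L(s)\,\d s\approx\rho^{a}L(\rho)$, whereas $\int_0^{\rho}s^{-1}\ell^{b}(s)\ell\ell^{c}(s)\,\d s$ is comparable to $\ell^{b+1}(\rho)\ell\ell^{c}(\rho)$ if $b>-1$, to $\ell\ell^{c+1}(\rho)$ if $b=-1$ and $c>-1$, to an $\ell\ell\ell$-term if $b=c=-1$, and diverges otherwise. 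Putting $\rho=r^{n}$ and $\ell(r^{n})\approx\log(e/r)$, $\ell\ell(r^{n})\approx\log\log(e/r)$, the regimes for $p$ reproduce exactly the three shapes in \eqref{E:HOpAa}, \eqref{E:HOpBb}, \eqref{E:HOpCc}: when $\widetilde p\in(n,\infty)$ one obtains the genuine H\"older moduli $r^{m-n/p}\log^{-\alpha}(e/r)\log\log^{-\beta}(e/r)$; when $Z=L^{\infty,q;\,\cdot\,,\,\cdot\,}$ --- whose associate is of $L^{(1,\,\cdot\,)}$-type, so that the maximal rearrangement $f^{**}$ in the definition \eqref{c:LZfunct2} of that norm spreads the kernel onto the scales $s>r^{n}$ and produces one extra logarithm --- one obtains the borderline moduli $r\log^{1/q'-\alpha}(e/r)\log\log^{-\beta}(e/r)$, $r\log\log^{1/q'-\beta}(e/r)$ and (at the doubly critical $L^{\infty,q;-1/q,-1/q,-1}$ target) $r\log\log\log^{1/q'}(e/r)$; and when $Z=L^{\infty}$ one gets $\widehat\sigma_m(r)\approx r$, i.e.\ Lipschitz regularity. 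The slowest moduli $\log\log^{1/q'-\beta}(e/r)$ and $\log^{1/q'-\alpha}(e/r)\log\log^{-\beta}(e/r)$ in \eqref{E:HOpAa}\textsubscript{1,2} and \eqref{E:HOpBb}\textsubscript{1,2} come from the extra threshold $p=n/m$, where $Z=L^{n,q;\alpha,\beta}$ sits at the critical Sobolev index; this threshold is inside the admissible range in Parts~(I) and~(II) but degenerates to $p=1$ in Part~(III), which is essentially the only structural difference between the three parts (and, similarly, $m=1$ is split off from $m\in\{2,\dots,n-1\}$ only because the thresholds $p=n/m$ and $p=n/(m-1)$ collapse to $p=n$ and $p=\infty$ when $m=1$).

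For every admissible $(p,q,\alpha,\beta)$ outside the formulas \eqref{E:HOpAa}--\eqref{E:HOpCc}, the function $r\mapsto\|s^{1/n-1}\chi_{(0,r^{n})}\|_{Z'(0,1)}$ is either $+\infty$ for all small $r$ --- for instance when $p<n/m$, or $p=n/m$ with $\alpha<1/q'$, by the divergence clause above --- or is bounded but does not tend to $0$, which occurs exactly at the Lorentz endpoint $W^{m}L^{n/m,1}(\Omega)$ (the classical $L^{n/m,1}$-improvement, which does embed into $C^{0}_b(\Omega)$, by Theorem~\ref{T:C0}(A), but into no $C^{0,\sigma(\cdot)}(\Omega)$). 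In neither case is it a modulus of continuity, so no H\"older embedding can hold; combined with Theorem~\ref{T:C0}(A) --- whose parameter list is precisely the union of the cases in the three formulas together with that single endpoint --- this yields the non-embedding assertions.

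Since the chain of equivalences above is two-sided, the embeddings \eqref{E:HOpA}, \eqref{E:HOpB}, \eqref{E:HOpC} and the optimality claims are delivered together, once one checks that each $\widehat\sigma_m$ is a bona fide modulus of continuity in the sense of \eqref{E:modulus}: that $\widehat\sigma_m(r)\to0$ is read off the closed forms just obtained and is exactly equivalent to the stated parameter restrictions, while $\liminf_{r\to0^{+}}r^{-1}\widehat\sigma_m(r)>0$ is automatic, because $s^{1/n-1}\chi_{(0,r^{n})}(s)\ge r^{1-n}\chi_{(0,r^{n})}(s)$ gives $\widehat\sigma_m(r)\gtrsim r^{1-n}\|\chi_{(0,r^{n})}\|_{Z'(0,1)}=r^{1-n}\cdot r^{n}/\|\chi_{(0,r^{n})}\|_{Z(0,1)}\gtrsim r$ by the identity $\phi_{Z}(r)\,\phi_{Z'}(r)=r$ and \eqref{N5}. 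I expect the main work --- and the main place for error --- to be twofold. First, setting up the reduction lemma of Section~\ref{secTec} in its sharp, two-sided form on Jones (not merely John) domains: sufficiency uses the extension property characterising the Jones class, sharpness needs a near-extremal function of prescribed rearrangement for each target, and one must track that the effective one-dimensional kernel \emph{degenerates at} $m=n$ --- its near-scale part $s^{m/n-1}\chi_{(0,r^{n})}$ flattens to $\chi_{(0,r^{n})}$ and the far scales $s>r^{n}$ take over, which is exactly why \eqref{E:HOpCc} carries the extra $\log^{1/q'}$-factor and why the $m<n$ computation cannot simply be quoted. Second, on the computational side, the borderline evaluations --- where finiteness of the relevant GLZ~norm flips as $\alpha$ crosses $1/q'$ or $\beta$ crosses $1/q'$ (equivalently $-1/q$) and where the leading term passes through an iterated logarithm --- must be carried out individually, since these borderlines are precisely the case boundaries in \eqref{E:HOpAa}--\eqref{E:HOpCc}.
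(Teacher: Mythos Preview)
Your route is correct but organised differently from the paper's. The paper invokes \cite[Theorem~3.4]{Monia1} directly at order $m$: that result already delivers the optimal modulus $\widehat\sigma_{m,X}$ for $W^m X(\Omega)$ in the form \eqref{feb31}, namely $\vartheta_{m,X}$ when $m=1$, the sum $\vartheta_{m,X}+\varrho_{m,X}$ when $2\le m\le n-1$, and $\varrho_{n,X}$ when $m=n$, all expressed via $X'$ through \eqref{theta}--\eqref{rho}; the rest of the proof is the case-by-case evaluation of those norms for $X'=(L^{p,q;\alpha,\beta})'$ using Lemma~\ref{L:H1}. You instead factor through the $(m{-}1)$-th order r.i.\ optimum $Z$, identify $Z$ explicitly via Theorem~\ref{T:OptRI}, and compute the \emph{first}-order quantity $\vartheta_{1,Z}(r)=\|s^{1/n-1}\chi_{(0,r^{n})}\|_{Z'(0,1)}$. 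The two match because the implicit description \eqref{E:eucl_opt_norm-john} of $Z'$ together with \eqref{E:optH1} gives $\vartheta_{1,Z}\approx\vartheta_{m,X}+\varrho_{m,X}$ for $2\le m\le n-1$, and $\vartheta_{1,Z}\approx\varrho_{n,X}$ for $m=n$ (the extra term $\phi_{X'}(r^{n})$ being dominated by $\varrho_{n,X}(r)$ via a dilation bound). Your approach is more modular and makes transparent why the borderline $p=n/(m-1)$ produces an $L^{(1,\cdot)}$-type associate and hence one extra logarithm; the paper's approach is shorter because \cite{Monia1} has already packaged the higher-order reduction.

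One caution: the two-sided principle ``$W^{m}X\hookrightarrow C^{0,\sigma}$ iff $W^{1}Z\hookrightarrow C^{0,\sigma}$'' that you rely on is \emph{not} in Section~\ref{secTec} --- that section contains only the computational Lemmas~\ref{L:H1} and~\ref{L:H2}. The first-order H\"older characterisation is \cite{Monia1} (at order~$1$), and for the optimality direction of your iteration you still need either the identity $\vartheta_{1,Z}\approx\widehat\sigma_{m,X}$ indicated above or an independent sharpness argument; without one of these the reverse implication is not yet justified.
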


The results of Theorem~\ref{T:HOp} encompass, generalize and/or improve, in particular, the classical embeddings \eqref{CS3}, 
 \cite{EGO3}*{Theorem~4.9}, \cite{EGO4}*{Theorem~3.2}  ($m= 1$, $p\in (n, \infty)$ and $2 \leq m\leq n$, $ p \in \left(\frac{n}{m}, \frac{n}{m-1}\right)$, $q \in (1, \infty)$),  \cite{EGO3}*{Theorem~4.11}, \cite{EGO4}*{Theorem~3.3} ($m> 1$, $p= \frac{n}{m-1}$, $q \in (1, \infty)$, either $\alpha < \frac{1}{q'}$, $\beta = 0$ or  $\alpha = \frac{1}{q'}$ and $\beta < \frac{1}{q'}$), and
\cite{Monia1}*{Example~6.7} (either $p=q= 1$, $\alpha \geq 0$ or $p=q> 1$, $\alpha \in \R$, and $\beta=0$). We remind the interested reader also to  \cite{EGO5}*{Theorem~2}, \cite{GNO}*{Theorem~3}, and \cite{GNO2}*{Corollary~6.2}.

\begin{remark}\label{SiCnoH} \rm \noindent  Theorem~\ref{T:C0} tells us that, for any bounded Jones domain  $\Omega$ in $\mathbb R^n$, with $n \geq 2$, and any $m \in \N$, with $m \leq n$, functions in  $W^{m}L^{p,q: \alpha, \beta} (\Omega)$   (or pointwise a.e.~equivalent versions of them) are bounded continuous functions on $\Omega$ provided the parameters fulfill one of the alternatives on the right\hyp{}hand side of \eqref{E:C0p}. When such a condition is in force, Theorem~\ref{T:HOp} clarifies that no embedding of $W^{m}L^{\frac nm,1} (\Omega)$ into spaces of the form $C^{0,\sigma(\cdot)}(\Omega)$ holds whatever the modulus of continuity $\sigma$ is.
\end{remark}

In view of Theorem~\ref{T:C0}, it is apparent that the problem of $L^{(1, q; \alpha, \beta)}$\hyp{}Sobolev embeddings of   into  H\"older type spaces has to be faced  only when $m=n$. It is addressed in the next result, where the optimal modulus of continuity is explicitly revealed as well. Let us stress in advance that only in the case $q=1$ conclusions \eqref{E:HO*A}\textsubscript{1,2} may be deduced from the parallel ones \eqref{E:HOpCc}\textsubscript{1,2} via the equation  \eqref{c:equivLZ2a}. The others are new even in the case of smooth domains.

 \begin{thm}[Optimal embeddings into H\"older  spaces - case $p=1$] \label{T:HO*} 
Let  $\Omega$ be a bounded Jones domain in $\mathbb R^n$, with $n \geq 2$. Assume that $q\in[1,\infty]$, $\alpha, \beta \in \R$ satisfy one of the   alternatives~\eqref{c:L1}.
Then,  
\begin{equation}\label{E:HO*A}
W^{n}L^{(1,q; \alpha, \beta)} (\Omega) \, \hookrightarrow \, C^{0, \widehat \omega_n(\cdot)}(\Omega), 
\end{equation}  
where $\widehat \omega_n\colon    (0, \infty) \to (0, \infty)$ obeys
\begin{equation} \label{E:HO*B} 
\widehat \omega_n(r) \,  \approx \,  \,
\begin{cases} 
\log^{-\frac 1q - \alpha} \! \left(\frac{e}{r}\right)   \log\log^{- \beta} \!  \left(\frac{e}{r}\right)  &\quad   \text{if} \ \, q\in[1,\infty], \      \alpha > - \frac{1}{q}, \ \beta\in \R \\[0.2ex] 
\log\log^{-\frac 1q- \beta} \!  \left(\frac{e}{r}\right)  &\quad    \text{if} \ \,  q\in[1,\infty], \   \alpha   = - \frac{1}{q}, \    \beta  > - \frac{1}{q}  \\[0.2ex] 
\log\log\log^{-\frac 1q} \! \left( \frac{e}{r}\right)  &\quad    \text{if} \ \,   q\in[1,\infty), \ \alpha  =   \beta  = - \frac{1}{q}   
\end{cases}  
\end{equation} 
near $0$.  
Moreover, the function $\widehat \omega_n$  is (up to multiplicative constants) the optimal modulus of continuity in embedding~\eqref{E:HO*A}. 
\end{thm}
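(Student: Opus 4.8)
The scheme mirrors that of Theorem~\ref{T:HOp}{\sc(III)}, now with the role of $L^{p,q;\alpha,\beta}$ played by $L^{(1,q;\alpha,\beta)}$ and $m=n$. First I would reduce the embedding on the bounded Jones domain $\Omega$ to a one-dimensional condition on the representation space: by the technical results of Section~\ref{secTec} (which on bounded Jones domains --- Sobolev extension domains --- reduce $m$-th order Sobolev embeddings into H\"older type spaces to a weighted Hardy-type inequality and identify the optimal modulus of continuity), one gets that $W^{n}L^{(1,q;\alpha,\beta)}(\Omega)\hookrightarrow C^{0,\sigma(\cdot)}(\Omega)$ holds if and only if $\widehat\omega_n\lesssim\sigma$ near $0$, where, since at the top order $m=n$ the intrinsic weight $s^{\frac mn-1}$ is identically $1$,
\[
\widehat\omega_n(r)\ \approx\ \big\|\min\{1,(r^n/s)^{\frac1n}\}\big\|_{(L^{(1,q;\alpha,\beta)})'(0,1)} ;
\]
equivalently $\widehat\omega_n(r)\approx\phi_{X'}(r^n)$ with $X=L^{(1,q;\alpha,\beta)}$, the polynomially decaying tail over $(r^n,1)$ turning out to be of strictly lower (logarithmic) order. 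One should also note at the outset that the target space is meaningful: $L^{(1,q;\alpha,\beta)}(\Omega)\hookrightarrow L^1(\Omega)$ by \eqref{E:imm}, so $W^{n}L^{(1,q;\alpha,\beta)}(\Omega)\hookrightarrow W^{n,1}(\Omega)\hookrightarrow C^0_b(\Omega)$ by \eqref{CS2}.

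It then remains to evaluate the displayed right-hand side in each of the three alternatives of \eqref{c:L1}, for which I would invoke the description of $(L^{(1,q;\alpha,\beta)})'(0,1)$ in \eqref{c:ASSnorm2}. When $\alpha>-\tfrac1q$ one has $(L^{(1,q;\alpha,\beta)})'(0,1)=L^{\infty,q';-\alpha-1,-\beta}(0,1)$; since $s\mapsto\min\{1,(r^n/s)^{1/n}\}$ is non-increasing, its norm there is a weighted $L^{q'}$-integral which, split at $s=r^n$ and computed through the substitution $u=\ell(s)$, is dominated by its part over $(0,r^n)$ and is comparable to $\ell(r^n)^{(-\alpha-1)q'+1}\ell\ell(r^n)^{-\beta q'}$ --- the integral converging at $0$ precisely because $(-\alpha-1)q'<-1$, i.e.\ precisely because $\alpha>-\tfrac1q$. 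Using $\ell(r^n)\approx\log(\tfrac er)$, $\ell\ell(r^n)\approx\log\log(\tfrac er)$ near $0$ and $\tfrac1{q'}-1=-\tfrac1q$, this yields the first line of \eqref{E:HO*B}. The cases $\alpha=-\tfrac1q,\ \beta>-\tfrac1q$ and $\alpha=\beta=-\tfrac1q$ (with $q<\infty$) are entirely analogous: by \eqref{c:ASSnorm2} the associate space is $L^{\infty,q';-\frac1{q'},-\beta-1}(0,1)$, respectively the five-parameter space $L^{\infty,q';-\frac1{q'},-\frac1{q'},-1}(0,1)$, the critical exponent having moved one, respectively two, logarithmic tiers up, so that one integrates in the variable $\ell\ell$, respectively $\ell\ell\ell$, obtaining $\widehat\omega_n(r)\approx\ell\ell(r^n)^{-\beta-1+\frac1{q'}}$, respectively $\widehat\omega_n(r)\approx\ell\ell\ell(r^n)^{-1+\frac1{q'}}$; these are the second and third lines of \eqref{E:HO*B}, and again the relevant integral converges if and only if the corresponding alternative of \eqref{c:L1} is in force, so no further case distinction arises. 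Along the way one checks that $\widehat\omega_n$ is genuinely a modulus of continuity in the sense of Section~\ref{sec2.4}: it tends to $0$ at $0$ in each case and $r^{-1}\widehat\omega_n(r)\to\infty$.

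The optimality assertion needs no separate argument: the characterization furnished by Section~\ref{secTec} shows that any modulus of continuity $\sigma$ with $W^{n}L^{(1,q;\alpha,\beta)}(\Omega)\hookrightarrow C^{0,\sigma(\cdot)}(\Omega)$ must satisfy $\widehat\omega_n\lesssim\sigma$ near $0$, whence $C^{0,\widehat\omega_n(\cdot)}(\Omega)\hookrightarrow C^{0,\sigma(\cdot)}(\Omega)$ by the monotonicity recorded in Section~\ref{sec2.4}. I expect the main obstacle to lie not in these (essentially bookkeeping) iterated-logarithmic integrals but in the reduction step at the top order $m=n$: one has to be sure that the contributions of the lower-order derivatives and of the ``far'' part of the underlying potential estimate are genuinely absorbed into $\widehat\omega_n$ and do not force a larger modulus --- a delicate point precisely because $(L^{(1,q;\alpha,\beta)})'$ is only logarithmically smaller than $L^\infty$, and it is here that the fine three-tier structure encoded in \eqref{c:ASSnorm2} must be used.
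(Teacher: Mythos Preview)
Your overall strategy and your computations in each of the three alternatives are correct and match the paper's, but the reduction step is misattributed and your displayed formula for $\widehat\omega_n$ is not the one the paper (or its source) actually uses. The reduction is not in Section~\ref{secTec} --- that section contains only the integral Lemmas~\ref{L:H1}--\ref{L:H2} --- but is \cite[Theorem~3.4]{Monia1}, recalled at the start of Subsection~\ref{HOL}. At the top order $m=n$ that result gives $\widehat\omega_n=\varrho_{n,X}$, i.e.
\[
\widehat\omega_n(r)=r\,\bigl\|s^{-1/n}\chi_{(r^n,1)}(s)\bigr\|_{X'(0,1)},
\]
not your $\min$ expression; your heuristic ``the intrinsic weight $s^{m/n-1}$ is identically $1$'' is the weight appearing in $\vartheta_{m,X}$, which is defined only for $m<n$ and does not enter when $m=n$ (see \eqref{feb31}).

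The discrepancy is harmless because the two quantities are equivalent here: rearranging $s^{-1/n}\chi_{(r^n,1)}$ via \eqref{E:optHma} and splitting at $s=r^n$ shows $\varrho_{n,X}(r)\approx\phi_{X'}(r^n)+(\text{a strictly smaller logarithmic term})$, so your identification $\widehat\omega_n(r)\approx\phi_{X'}(r^n)$ is correct a~posteriori. From that point on both routes coincide: the paper computes $\phi_{X'}(r^n)$ through the associate-space descriptions \eqref{c:ASSnorm2} and Lemmas~\ref{L:H1}--\ref{L:H2}, while you do the same weighted $L^{q'}$-integrals by direct substitution in $\ell$, $\ell\ell$, $\ell\ell\ell$. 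The optimality clause and the check that $\widehat\omega_n$ is a genuine modulus of continuity are handled exactly as you indicate.
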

 
 \subsection{Embeddings into Morrey and Campanato type spaces}\label{SS:MorreyCamp}

In the present section  the explicit forms of the (existing) optimal admissible functions in the embeddings \eqref{EmbMor} and \eqref{EmbCam} in Section~\ref{Intro} for GLZ~spaces are detected.

We deal first with $m$\hyp{}order GLZ\hyp{}Sobolev  embeddings into Morrey  spaces.  They are special instances of the general result  \cite{CCPS2}*{Theorem~2.2}, that we recall at the beginning of Section~\ref{P:MorreyCamp}.

\noindent We may (and will) assume that the order $m$ of the Sobolev\hyp{}type space is strictly less then the dimension of the underlying domain, without loss of generality. Indeed, as pointed out at the beginning of Section~\ref{SS:ri} and at the end of Section~\ref{sec2.5}, respectively, any $m$\hyp{}order GLZ\hyp{}Sobolev space on a John domain $\Omega$  embeds into $L^\infty(\Omega)$ when $m \geq n$, and $L^\infty(\Omega)=\mathcal M^{0}(\Omega)$.  

\begin{thm}[Optimal embeddings into Morrey   spaces] \label{T:Mor}   
Let $\Omega$ be a John domain in $\mathbb R^n$, with $n \geq 2$, and $m \in \N$, with $m < n$. 
       
\vspace{1mm}
    
\noindent {\sc(A)}   Assume that $p, q\in[1,\infty]$, $\alpha, \beta \in \R$ fulfill one of the alternatives~\eqref{c:LZnorm}.  Then,  
\begin{equation}\label{E:MOpA}
W^{m}L^{p, q; \alpha, \beta}  (\Omega)  \, \hookrightarrow \, \mathcal M^{\widetilde\varphi_m(\cdot)}(\Omega) ,
\end{equation} 
where $\widetilde\varphi_m\colon(0,\infty)\to(0,\infty)$ obeys
\begin{equation}\label{E:MOpAa}
\widetilde\varphi_m(r)  \approx 
\begin{cases}
r^{m-\frac{n}{p}} \log^{- \alpha} \! \left(\frac{e}{r}\right) \log\log^{- \beta} \! \left( \frac{e}{r}\right) 
&\quad    \text{if} \, \begin{cases} p= q= 1, \  \alpha=0, \  \beta \geq 0  \\
p= q=1,   \ \alpha>0, \  \beta \in \R 
\\[0.2ex] 
p \in \left(1, \frac nm\right) \!, \ q\in[1,\infty], \ \alpha, \beta \in \R
\end{cases}    \\[0.2ex] 
\log^{\frac {1}{q'} -\alpha}\! \left(\frac{e}{r}\right)   \log\log^{- \beta} \! \left( \frac{e}{r}\right) 
&\quad    \text{if} \ \, p=\frac nm,  \ q\in[1,\infty],  \   \alpha   < \frac {1}{q'}, \ \beta \in \R  \\[0.2ex] 
\log\log^{\frac {1}{q'} -\beta} \! \left(  \frac{e}{r}\right) &\quad    \text{if} \ \, p=\frac nm, \ q\in[1,\infty], \ \alpha   = \frac {1}{q'},   \  \beta < \frac {1}{q'}   \\[0.2ex] 
\log\log\log^{\frac {1}{q'}} \!   \left(\frac{e}{r}\right) 
&\quad    \text{if} \  \, p=\frac nm,   \ q \in (1, \infty], \ \alpha =   \beta = \frac {1}{q'}  \\[0.2ex] 
1 &\quad    \text{if} \, \begin{cases}p=\frac nm,  \ q=1, \  \alpha= \beta=0 \\ p=\frac nm,
\ q\in[1,\infty], \  \alpha =\frac {1}{q'}, \   \beta >  \frac {1}{q'}   \\[0.2ex]
p=\frac nm,    \ q\in[1,\infty],  \   \alpha   > \frac {1}{q'},\  \beta \in \R   \\[0.2ex]
p\in \left(\frac nm, \infty\right) \! , \ q\in[1,\infty], \ \alpha,   \beta \in \R  \\[0.2ex]
p=\infty,   \ q\in[1,\infty], \ \alpha    < - \frac1{q} , \ \beta \in \R     \\[0.2ex]
p=\infty,  \ q\in[1,\infty], \ \alpha   = - \frac1{q}, \     \beta  <   - \frac1{q}   \\[0.2ex] 
p=   q=\infty,  \  \alpha=  \beta=0 
\end{cases} 
\end{cases} 
\end{equation}
near $0$.
Moreover, the function $ \widetilde\varphi_m$  is (up to multiplicative constants) the optimal admissible function in the embedding~\eqref{E:MOpA},  in the sense that, if there exists admissible function $\varphi$ such that $W^{m}L^{p, q; \alpha, \beta}  (\Omega) \hookrightarrow \mathcal M^{\varphi(\cdot)}(\Omega)$, then $\mathcal M^{ \widetilde\varphi_m(\cdot)}(\Omega) \hookrightarrow \mathcal M^{\varphi(\cdot)}(\Omega)$. 
  
\vspace{1mm}
      
\noindent {\sc(B)}    Assume that $q\in [1,\infty]$, $\alpha, \beta \in \R$ satisfy one of the alternatives~\eqref{c:L1}.
Then, 
\begin{equation}\label{E:MOpB}
W^{m}L^{(1, q; \alpha, \beta)}  (\Omega) \, \hookrightarrow \,  \mathcal M^{\widetilde\psi_m(\cdot)}(\Omega) ,
\end{equation}
where $\widetilde\psi_m\colon(0,\infty)\to(0,\infty)$ obeys
\begin{equation}\label{E:MOpBb}
\widetilde\psi_m(r) \,  \approx \, 
\begin{cases}
r^{m -n}  \log^{-\frac{1}{q}-\alpha} \! \left(\frac{e}{r}\right)   \log\log^{-\beta}\!\left(\frac{e}{r}\right) 
&\quad    \text{if} \ \, q\in[1,\infty], \ \alpha >- \frac1{q}, \ \beta \in \R  \\[0.2ex] 
r^{m -n}   \log\log^{-\frac{1}{q}-\beta} \!  \left( \frac{e}{r}\right) &\quad   \text{if} \ \, 
 q\in[1,\infty], \   \alpha   = - \frac1{q}, \    \beta >  - \frac1{q}    \\[0.2ex] 
r^{m -n}  \log\log  \log^{-\frac 1q} \! \left( \frac{e}{r}\right)  &\quad   \text{if} \ \, q \in [1, \infty), \   \alpha= \beta = - \frac{1}{q}
\end{cases} 
\end{equation}
near $0$.  
Moreover, the function $\widetilde\psi_m$  is (up to multiplicative constants) the optimal admissible function in the embedding~\eqref{E:MOpB}.    
\end{thm}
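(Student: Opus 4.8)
The plan is to derive Theorem~\ref{T:Mor} as a corollary of the general optimal-Morrey-embedding result \cite[Theorem~2.2]{CCPS2}, which tells us that for a bounded John domain $\Omega$ the space $W^{m}X(\Omega)$ embeds into an optimal Morrey type space $\mathcal M^{\varphi_{\rm opt}(\cdot)}(\Omega)$ whose generating admissible function is expressed through the representation-space norm of $X$ via the operator $S_m$ from \eqref{E:EOPJC}, or equivalently through the optimal r.i.~target space $X_{m,\rm opt}$. Concretely, one expects $\varphi_{\rm opt}(r)\approx r^{n}/\phi_{(X_{m,\rm opt})'}(r^{n})$ (the fundamental function of the associate of the optimal r.i.~range, evaluated at a ball of radius $r$), up to multiplicative constants, modulo the precise normalization used in \cite{CCPS2}. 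Thus the whole computation reduces to: (i) identify $X_{m,\rm opt}$ for $X=L^{p,q;\alpha,\beta}$ and $X=L^{(1,q;\alpha,\beta)}$ — but this is exactly Theorem~\ref{T:OptRI} (Parts~{\sc(A)} and {\sc(B)}, together with Remark~\ref{R:OptRI} and the chain \eqref{piece}) — and (ii) compute the fundamental function of the associate of a (generalized) Lorentz--Zygmund space.

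First I would record the known fundamental functions. For $X=L^{p,q;\alpha,\beta}(0,1)$ with $p\in(1,\infty)$ one has $\phi_X(r)\approx r^{1/p}\ell^{\alpha}(r)\ell\ell^{\beta}(r)$ near $0$, and by \eqref{c:ASSnorm} the associate space is again a GLZ~space with parameters $(p',q';-\alpha,-\beta)$, so $\phi_{X'}(r)\approx r^{1/p'}\ell^{-\alpha}(r)\ell\ell^{-\beta}(r)$ near $0$; the identity $\phi_X(r)\phi_{X'}(r)=r$ is then visible. For the limiting case $p=\infty$ the associate is an $L^{(1,\cdots)}$ space by \eqref{c:ASSnormA}--\eqref{c:ASSnormB}, whose fundamental function carries an extra logarithmic factor, and for the five-parameter spaces $L^{1^*,q;0,0,\gamma}$ appearing in \eqref{E:OptRI2} one reads the fundamental function off \eqref{3log} in the same way. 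Feeding these into the formula for $\varphi_{\rm opt}$, substituting the optimal range from Theorem~\ref{T:OptRI} (with $p$ there replaced by the Sobolev conjugate $p^{*}$ in the subcritical regime, i.e.~$\tfrac1{p^{*}}=\tfrac1p-\tfrac mn$), and simplifying $\ell(r^{n})\approx\ell(r)$, $\ell\ell(r^{n})\approx\ell\ell(r)$ near $0$, produces precisely the case lists \eqref{E:MOpAa} and \eqref{E:MOpBb}: the power $r^{m-n/p}$ in the subcritical case, the purely logarithmic weights in the critical case $p=n/m$, the constant function $1$ (i.e.~$\mathcal M^{0}=L^{\infty}$) in the supercritical/ bounded cases, and the $r^{m-n}$-weighted logarithmic profiles for $p=1$. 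The optimality statement transfers verbatim from \cite[Theorem~2.2]{CCPS2} once the optimality of $X_{m,\rm opt}$ (Theorem~\ref{T:OptRI}) is in hand, since the correspondence $X\mapsto\varphi_{\rm opt}$ there is order-preserving and optimal.

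The main obstacle — and the only place where genuine work is needed — is the bookkeeping at the boundary values of the parameters: the transitions $\alpha=\tfrac1{q'}$ versus $\alpha\gtrless\tfrac1{q'}$ at $p=n/m$, and $\alpha=-\tfrac1q$ versus $\alpha>-\tfrac1q$ at $p=1$, are exactly the places where \eqref{c:ASSnorm} hands over to \eqref{c:ASSnormA}/\eqref{c:ASSnormB} (or to \eqref{c:ASSnorm2}), so one logarithmic exponent drops by one and a new $\ell\ell$ or $\ell\ell\ell$ factor is created. One must check case-by-case that the admissibility condition \eqref{E:admissible-new} holds for each $\widetilde\varphi$, $\widetilde\psi$ (it does: each candidate is, near $0$, a power times slowly varying logarithmic factors, and is bounded below by a positive constant on $[a,\infty)$), and that when the optimal r.i.~range is $L^{\infty}(\Omega)$ the resulting Morrey exponent is $\varphi\equiv1$, matching $\mathcal M^{0}(\Omega)=L^{\infty}(\Omega)$. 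A secondary, more delicate point in Part~{\sc(B)} for $q>1$: there $X_{m,\rm opt}$ is not itself a GLZ~space, so strictly speaking one should argue that the Morrey functional only sees the fundamental function of the optimal range (which, by \eqref{piece} and the two-sided bounds there, is still equivalent near $0$ to that of the displayed GLZ~space with $\alpha$ shifted by $1$ rather than $1/q$), and hence \eqref{E:MOpBb} is unaffected by the failure of GLZ-optimality at the r.i.~level; I would isolate this as a short lemma invoking the monotonicity of $\varphi\mapsto\mathcal M^{\varphi(\cdot)}$ recorded in Section~\ref{sec2.5}.
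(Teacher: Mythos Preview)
Your plan has a genuine gap at its very first step. The optimal Morrey function supplied by \cite[Theorem~2.2]{CCPS2} is \emph{not} expressed through the fundamental function of $X_{m,\rm opt}$; as recalled in the paper (equation~\eqref{E:optMo}), it is the explicit quantity
\[
\widetilde\varphi_X(r)=\bigl\|s^{-1+\frac{m}{n}}\chi_{(r^n,1)}(s)\bigr\|_{X'(0,1)},
\]
computed directly in the associate space $X'$. Your guessed formula $\varphi_{\rm opt}(r)\approx r^{n}/\phi_{(X_{m,\rm opt})'}(r^{n})=\phi_{X_{m,\rm opt}}(r^{n})$ is in fact the \emph{reciprocal} of the correct answer: in the subcritical case $p\in(1,n/m)$ it yields $r^{n/p-m}\ell^{\alpha}\ell\ell^{\beta}$, whereas \eqref{E:MOpAa} requires $r^{m-n/p}\ell^{-\alpha}\ell\ell^{-\beta}$. (The H\"older inequality $\dashint_B|u|\le |B|^{-1}\phi_{(X_{m,\rm opt})'}(|B|)\,\|u\|_{X_{m,\rm opt}}$ gives $\varphi(r)=r^{-n}\phi_{(X_{m,\rm opt})'}(r^{n})$, the inverse of what you wrote.) Even with the sign fixed, you would still owe a proof that this agrees with the \cite{CCPS2} formula and is optimal, which you do not supply.

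The paper bypasses the detour through $X_{m,\rm opt}$ entirely: since $X'=(L^{p,q;\alpha,\beta})'$ and $(L^{(1,q;\alpha,\beta)})'$ are themselves explicit GLZ-type spaces (by \eqref{c:ASSnorm}--\eqref{c:ASSnorm2}), one computes $\|s^{-1+m/n}\chi_{(r^n,1)}\|_{X'}$ directly. After the elementary rearrangement \eqref{E:optRa}--\eqref{E:optMoA}, this reduces to evaluating $\|\Psi_{\lambda;q,\alpha,\beta}\|_{L^{q'}}$ on the intervals $(0,r)$ and $(r,1-r)$, which is exactly what Lemma~\ref{L:H1} (and Lemma~\ref{L:H2} for the triple-log case) provides. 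This is why no appeal to Theorem~\ref{T:OptRI} is needed, and why Part~{\sc(B)} with $q>1$ causes no difficulty: the associate $X'$ is always a GLZ space even when $X_{m,\rm opt}$ is not. Your proposed fix for Part~{\sc(B)}---reading off the fundamental function from the two-sided inclusion \eqref{piece}---does not work, because the two endpoints $L^{1^*,q;\alpha+1,\beta}$ and $L^{1^*,q;\alpha+1/q,\beta}$ have fundamental functions differing by a factor $\ell^{1-1/q}$, so the sandwich does not determine $\phi_{X_{m,\rm opt}}$ up to equivalence.
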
 
 In the case when either $p \in (1, \infty)$, $\alpha \in \R$, $\beta = 0$ or $p=1$, $\alpha\geq 0$, $\beta = 0$, our conclusions in \eqref{E:MOpAa} are the content of \cite{CCPS2}*{Corollary~2.11}. The results of Part~{\sc(B)} are actually new even for more regular domains, and only for $q=1$ conclusions \eqref{E:MOpBb}\textsubscript{1,2} can be derived from the parallel ones \eqref{E:MOpAa}\textsubscript{1,2} via the equation  \eqref{c:equivLZ2a}.

\vspace{2mm}
Next, we focus on $m$\hyp{}order GLZ\hyp{}Sobolev embeddings into Campanato spaces, which are special instances of the general result  \cite{CCPS2}*{Theorem~2.6}  recalled in Section~\ref{P:MorreyCamp}, p.~\pageref{E:optCa}. 
On distinguishing into the cases when the spaces $L^{p, q; \alpha, \beta}(\Omega)$ or the spaces $L^{(1,q; \alpha, \beta)}(\Omega)$ are concerned, we provide the exact description of the (existing) optimal admissible function $ \widehat \varphi_{m}$ [resp.~$\widehat \psi_{m}$] in the embedding \eqref{EmbCam}. It depends -in analogy with the parallel embeddings into H\"older type spaces- on whether $m=1$, $m \in \{2, \dots,  n-1\}$ and $m=n$ [resp.~$m=1$ and $m \in \{2, \dots,  n\}$].

\begin{thm}[Optimal embeddings into  Campanato spaces - case $p>1$]\label{T:CampP}   Let $\Omega$ be a John domain in $\mathbb R^n$, with $n \geq 2$, and $m \in \N$, with $m \leq n$. 
Assume that $p, q\in[1,\infty]$, $\alpha, \beta \in \R$ fulfill one of the   alternatives~\eqref{c:LZnorm}.
       
\vspace{1mm}
     
\noindent {\sc(I)}   The embedding 
\begin{equation}\label{E:CaPA}
W^1 L^{p, q; \alpha, \beta}(\Omega) \, \hookrightarrow \,  \mathcal L^{\widehat \varphi_{1}(\cdot)}(\Omega) 
\end{equation}
holds, where $\widehat \varphi_{1} \colon(0,\infty)\to(0,\infty)$ obeys 
\begin{equation}\label{E:CaPAa}
\widehat \varphi_{1}(r) \, \approx \,
\begin{cases}
r^{1 -\frac{n}{p}} \log^{- \alpha}\!\left(\frac{e}{r}\right) \log\log^{- \beta}\! \left(\frac{e}{r}\right) &\quad    \text{if}   \,  \begin{cases}p =   q=1, \    \alpha =0, \  \beta \geq 0 \\[0.2ex]   p =   q=1, \    \alpha >0, \ \beta \in \R\\[0.2ex]  
p \in  (1, n],   \    q\in[1,\infty], \     \alpha,    \beta  \in \R
\end{cases}   \\[0.5ex] 
 \widehat \sigma_1(r)   &\quad  \text{if} \ \, p \in (n, \infty]  
\end{cases} 
\end{equation}

\noindent near $0$, and $\widehat \sigma_1$ is as in \eqref{E:HOpAa}. 
Moreover, the function $\widehat \varphi_{1}$ is (up to multiplicative constants) the optimal admissible function in the embedding~\eqref{E:CaPA}, in the sense that, if there exists admissible function  $\varphi$ such that  
$W^{1}L^{p, q; \alpha, \beta}(\Omega) \hookrightarrow \mathcal L^{\varphi(\cdot)}(\Omega)$, then $\mathcal L^{\widehat \varphi_{1}(\cdot)}(\Omega) \hookrightarrow \mathcal L^{\varphi(\cdot)}(\Omega)$. 

\vspace{1mm}

\noindent {\sc(II)}    If  $m \in \{2, \dots,  n-1\}$, then       
\begin{equation}\label{E:CaPB}
W^m L^{p, q; \alpha, \beta}(\Omega) \, \hookrightarrow \,  \mathcal L^{\widehat \varphi_m(\cdot)}(\Omega), 
\end{equation}
where $\widehat \varphi_m\colon(0,\infty)\to(0,\infty)$ obeys
\begin{equation}\label{E:CaPBb}
\widehat \varphi_m(r)  \, \approx \,
\begin{cases}
r^{m -\frac{n}{p}} \log^{- \alpha} \! \left(\frac{e}{r}\right) \log\log^{- \beta} \!   \left(\frac{e}{r}\right) 
&\quad    \text{if} \, \begin{cases} p= q= 1, \  \alpha=0,  \ \beta\geq 0  \\[0.2ex] 
p= q=1,   \ \alpha>0, \ \beta \in \R 
\\[0.2ex] 
p \in \left(1, \frac{n}{m}\right] \!, \   q   \in[1,\infty], \ \alpha, \beta \in \R
\end{cases}  \\[0.5ex] 
\widehat \sigma_{m}(r) &\quad    \text{if} \ \, p\in \left(\frac{n}{m}, \infty\right]
\end{cases} 
\end{equation}

\noindent near $0$, and $\widehat \sigma_m$ is as in \eqref{E:HOpBb}.  
Moreover, the function $\widehat \varphi_{m}$  is (up to multiplicative constants) the optimal admissible function in the embedding~\eqref{E:CaPB}. 
 
\vspace{1mm}
 
\noindent {\sc(III)} The embedding 
\begin{equation}\label{E:CaPC}
W^{n}L^{p, q; \alpha, \beta}  (\Omega) \, \hookrightarrow \, \mathcal L^{\widehat \varphi_n(\cdot)}(\Omega)
\end{equation} 
holds,  where  $\widehat \varphi_n\colon    (0, \infty) \to (0, \infty)$ obeys 
\begin{equation}\label{E:CaPCc}
\widehat \varphi_n  (r) \, \approx \,   \begin{cases} 1 &\quad    \text{if} \ \,   p= q= 1, \  \alpha= \beta= 0  \\\widehat \sigma_n(r)  &\quad \text{otherwise}  
\end{cases} 
\end{equation} 
near $0$, with $\widehat \sigma_n$ being as in   \eqref{E:HOpCc}.
Moreover, the function $\widehat \varphi_{n}$  is (up to multiplicative constants) the optimal admissible function in the embedding~\eqref{E:CaPC}. 
\end{thm}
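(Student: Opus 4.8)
The plan is to derive Theorem~\ref{T:CampP} from the abstract theory of optimal Campanato-type targets for Sobolev spaces over rearrangement-invariant domain spaces, namely \cite[Theorem~2.2]{CCPS2}. Applied to $X(\Omega)=L^{p,q;\alpha,\beta}(\Omega)$ on a bounded John domain, that result produces at once the embedding $W^{m}L^{p,q;\alpha,\beta}(\Omega)\hookrightarrow\mathcal L^{\varphi^{*}(\cdot)}(\Omega)$ \emph{and} the optimality of $\mathcal L^{\varphi^{*}(\cdot)}(\Omega)$ among Campanato type spaces, where $\varphi^{*}$ is an admissible function given only implicitly, through the representation norm $\|\cdot\|_{L^{p,q;\alpha,\beta}(0,1)}$, its associate, and the optimal r.i.\ range $X_{m,\text{opt}}$ of \eqref{E:eucl_opt_norm-john}--\eqref{E:EOPJC}. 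Hence the whole content of the theorem reduces to identifying $\varphi^{*}$, up to equivalence near $0$, with the closed forms $\widehat\varphi_{m}$ of \eqref{E:CaPAa}, \eqref{E:CaPBb} and \eqref{E:CaPCc}; once that is done, optimality comes for free, since equivalent gauges near $0$ yield the same Campanato space. The only parameter tuple needing separate treatment is $m=n$, $p=q=1$, $\alpha=\beta=0$, i.e.\ the classical space $W^{n,1}(\Omega)$, for which the Hölder modulus $\widehat\sigma_{n}$ of \eqref{E:HOpCc} is not even defined. The threefold split into parts {\sc(I)}, {\sc(II)}, {\sc(III)} is forced by the position of the critical exponents: for $m=1$ there is only $\tfrac nm=n$, for $m\in\{2,\dots,n-1\}$ there are two finite thresholds $\tfrac nm$ and $\tfrac n{m-1}$, and for $m=n$ one has $\tfrac nm=1$ together with the degenerate $W^{n,1}$ endpoint.

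To identify $\varphi^{*}$ I would first substitute into its implicit expression the explicit descriptions of the associate norms $(L^{p,q;\alpha,\beta})'(0,1)$ from \eqref{c:ASSnorm}--\eqref{c:ASSnormB} and of $X_{m,\text{opt}}'$ from \eqref{E:eucl_opt_norm-john}. After this the evaluation of $\varphi^{*}(r)$ collapses to estimating $L^{q}(0,1)$-norms and associated Hardy-type maximal operators applied to functions of the form $s^{\gamma}\ell^{\delta}(s)\ell\ell^{\varepsilon}(s)$ restricted to $(0,r^{n})$ or to $(r^{n},1)$ --- precisely the elementary but delicate estimates recorded in the two technical results of Section~\ref{secTec}. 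These must be run through the usual case analysis: $p$ below, equal to, or above $\tfrac nm$ (and, when $m\ge 2$, also the secondary threshold $\tfrac n{m-1}$ already present inside $\widehat\sigma_{m}$), and $\alpha,\beta$ below, at, or above $\tfrac1{q'}$ and $-\tfrac1q$. It is convenient to organise the outcome into three regimes. In the \emph{super-critical} regime $p>\tfrac nm$ (with the obvious adjustment when $m=n$) the computation reproduces verbatim the function displayed in \eqref{E:HOpAa}, \eqref{E:HOpBb} or \eqref{E:HOpCc}, which we have named $\widehat\sigma_{m}$: nothing is needed beyond recognising the same Lorentz--Zygmund norms already evaluated for Theorem~\ref{T:HOp}. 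In the \emph{sub-critical} regime $p<\tfrac nm$ the gauge carries a genuine negative power $r^{m-\frac np}$, so $\mathcal L^{\varphi^{*}(\cdot)}(\Omega)=\mathcal M^{\varphi^{*}(\cdot)}(\Omega)$ (cf.\ \cite{Sp}) and $\varphi^{*}$ coincides near $0$ with the Morrey gauge $\widetilde\varphi$ of Theorem~\ref{T:Mor}{\sc(A)}. Finally, the \emph{critical} cases $p=\tfrac nm$ (and $p=\tfrac n{m-1}$ within $\widehat\sigma_{m}$ when $m\ge 2$) require the finest bookkeeping of iterated logarithms and produce the single-, double- and triple-logarithmic entries of \eqref{E:CaPAa}--\eqref{E:CaPCc}.

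It remains to treat $W^{n,1}(\Omega)$ directly. Here $X=L^{1}$, so $X'=L^{\infty}$ and, by \eqref{E:eucl_opt_norm-john} with $m=n$, $\|f\|_{X_{n,\text{opt}}'(0,1)}=\|s\,f^{**}(s)\|_{L^{\infty}(0,1)}=\|f\|_{L^{1}(0,1)}$, whence $X_{n,\text{opt}}=L^{\infty}$ and $W^{n,1}(\Omega)\hookrightarrow L^{\infty}(\Omega)=\mathcal M^{0}(\Omega)\hookrightarrow\mathcal L^{0}(\Omega)={\rm BMO}(\Omega)$, i.e.\ $\widehat\varphi_{n}\approx 1$. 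Optimality again follows from \cite[Theorem~2.2]{CCPS2}: the implicit gauge evaluates to a constant because no admissible $\varphi$ with $\varphi(r)\to 0$ as $r\to 0^{+}$ can absorb the (arbitrarily slow) modulus of continuity of $W^{n,1}$-functions, so $\mathcal L^{0}(\Omega)$ is the smallest Campanato target containing $W^{n,1}(\Omega)$.

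I expect the principal obstacle to be the sheer bookkeeping in the critical cases. The implicit gauge $\varphi^{*}$ is, morally, a maximum of a ``Morrey/${\rm BMO}$-type'' contribution and a ``Hölder-type'' contribution $\widehat\sigma_{m}$, and at the critical exponent $p=\tfrac nm$ one must decide which of the two dominates and with which logarithmic correction; this is exactly what separates, for instance, the Morrey answer $\log^{\frac1{q'}-\alpha}$ from the Campanato answer $\log^{-\alpha}$ there, and getting every threshold value of $\alpha$ and $\beta$ right is where the care is concentrated.
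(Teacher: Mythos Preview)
Your overall plan—invoke the abstract optimal-Campanato result of \cite{CCPS2} for a general r.i.\ domain space $X$ and then compute the implicit gauge for $X=L^{p,q;\alpha,\beta}$ using the associate-norm formulas \eqref{c:ASSnorm}--\eqref{c:ASSnormB} and Lemma~\ref{L:H1}—is exactly the paper's strategy. But your description of the implicit gauge is off, and this matters. The relevant result is \cite[Theorem~2.6]{CCPS2} (Theorem~2.2 is the Morrey version), and the optimal Campanato gauge it produces does \emph{not} involve $X_{m,\text{opt}}$ at all. It is given directly as
\[
\widehat\varphi_{m,X}(r)=
\begin{cases}
r^{-n+1}\|\chi_{(0,r^{n})}\|_{X'(0,1)} & \text{if }m=1,\\
\varrho_{m,X}(r)=r\,\bigl\|s^{-1+\frac{m-1}{n}}\chi_{(r^{n},1)}(s)\bigr\|_{X'(0,1)} & \text{if }m\in\{2,\dots,n\},
\end{cases}
\]
for $r\in(0,\tfrac14]$; see \eqref{E:optCa} in the paper. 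Because of this you are missing the key shortcut: for $m\ge 2$, the Campanato gauge is \emph{identical} to the function $\varrho_{m}$ already computed in full in the proof of Theorem~\ref{T:HOp} (see \eqref{E:Brho}), so Parts~{\sc(II)} and~{\sc(III)} require no new calculation beyond reading off \eqref{E:Brho} and comparing with $\widehat\sigma_{m}$ via Remark~\ref{R:forCamP}. In particular, the $W^{n,1}$ case needs no separate argument through $X_{n,\text{opt}}=L^{\infty}$ and ${\rm BMO}$: one simply has $\varrho_{n}(r)\approx r^{0}=1$ from \eqref{E:Brho}${}_{1}$. Only Part~{\sc(I)} requires a fresh (and very short) computation of $r^{-n+1}\|\chi_{(0,r^{n})}\|_{X'}$, which is a single application of \eqref{E:H10}. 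Your ``maximum of two contributions'' heuristic and the sub/super/critical trichotomy are plausible conceptual packaging, but the actual proof is more mechanical and shorter than you anticipate once the correct formula for $\widehat\varphi_{m,X}$ is in hand.
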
 

In the case when either $p \in (1, \infty)$, $\alpha \in \R$, $\beta = 0$ or $p=1$, $\alpha\geq 0$, $\beta = 0$, the  conclusions above are the content of \cite{CCPS2}*{Corollary~2.14} for $k=0$.

We conclude this section with new explicit embeddings into Campanato type spaces concerning Sobolev spaces built on upon members of the $L^{(1, q; \alpha, \beta)}$~class.

\begin{thm}[Optimal embeddings into   Campanato spaces - case $p=1$] \label{T:Ca*}    
Let $\Omega$ be a John domain in $\mathbb R^n$, with $n \geq 2$, and $m \in \N$, with $m \leq n$.  
Assume that $q\in[1,\infty]$, $\alpha, \beta \in \R$ satisfy one of the alternatives~\eqref{c:L1}. 
   
\vspace{1mm}
     
\noindent {\sc(I)}   The embedding
\begin{equation}\label{E:Ca*A}
W^1 L^{(1, q; \alpha, \beta)}(\Omega)  \, \hookrightarrow \, \mathcal L^{ \widehat \psi_{1}(\cdot)}(\Omega) 
\end{equation}
holds, where $\widehat\psi_{1} \colon(0,\infty)\to(0,\infty)$ obeys
\begin{equation}\label{E:Ca*Aa}
\widehat \psi_{1}(r)  \, \approx \,
\begin{cases}
r^{1 - n} \log^{- \frac{1}{q}- \alpha} \! \left(\frac{e}{r}\right) \log\log^{- \beta} \!  \!\left(\frac{e}{r}\right) &\quad   \text{if} \ \,  q\in[1,\infty], \ \alpha   > - \frac{1}{q}, \ \beta \in \R  \\[0.2ex]  
 r^{1-n}\log\log^{- \frac{1}{q}- \beta}\!\left( \frac{e}{r}\right) &\quad    \text{if} \ \,  q\in[1,\infty], \ 
 \alpha  = - \frac{1}{q}, \     \beta > - \frac{1}{q}  \\[0.2ex]  
r^{1-n}\log\log\log^{ - \frac{1}{q}} \!  \left(\frac{e}{r}\right) &\quad    \text{if} \ \,  q\in[1,\infty), \ \alpha= \beta   = - \frac{1}{q}  
\end{cases} 
\end{equation}

\noindent near $0$. 
Moreover, the function $\widehat \psi_{1}$  is (up to multiplicative constants) the optimal admissible function in the embedding~\eqref{E:Ca*A},  in the sense that, if there exists admissible function  $\varphi$ such that $W^1 L^{(1, q; \alpha, \beta)}(\Omega) \hookrightarrow \mathcal L^{\varphi(\cdot)}(\Omega) $, then $\mathcal L^{\widehat \psi_{1}(\cdot)}(\Omega) \hookrightarrow \mathcal L^{\varphi(\cdot)}(\Omega)$. 

\vspace{1mm}
   
\noindent {\sc(II)}   If $m \in \{2, \dots,  n\}$, then   
\begin{equation}\label{E:Ca*B}
W^m L^{(1, q; \alpha, \beta)}(\Omega) \, \hookrightarrow \,  \mathcal L^{\widehat\psi_m(\cdot)}(\Omega),
\end{equation}
 where $\widehat\psi_m\colon(0,\infty)\to(0,\infty)$  obeys 
\begin{equation}\label{E:Ca*Bb}
\widehat \psi_m(r) \, \approx \,
\begin{cases}
r^{m  -n} \log^{-\frac{1}{q}-\alpha}   \!\left(\frac{e}{r}\right)   \log\log^{-\beta}\! \left(\frac{e}{r}\right) 
&\quad    \text{if} \ \,  q \in [1, \infty], \ \alpha >- \frac{1}{q}, \ \beta \in \R   \\[0.2ex]  
r^{m -n} \log\log^{- \frac{1}{q}-\beta}\!   \left(\frac{e}{r}\right) &\quad    \text{if} \ \, q \in [1, \infty], \   \alpha   = - \frac{1}{q},   \ \beta > - \frac{1}{q} \\[0.2ex] 
r^{m -n}   \log\log\log^{-1}\!   \left(\frac{e}{r}\right) &\quad    \text{if} \ \, q \in [1, \infty), \   \alpha  =  \beta  = - \frac{1}{q}  
\end{cases}
\end{equation}
near $0$. 
Moreover, the function $\widehat \psi_{m}$ is (up to multiplicative constants) the optimal admissible function in the embedding~\eqref{E:Ca*B}. 
\end{thm}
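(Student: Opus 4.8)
The plan is to deduce both parts from the general optimal Campanato embedding for Sobolev spaces modelled on an arbitrary rearrangement-invariant space over a bounded John domain, \cite[Theorem~2.2]{CCPS2}, whose hypotheses are in force here: $\Omega$ is a bounded John domain, $m\le n$, and $L^{(1,q;\alpha,\beta)}(\Omega)$ is a genuine rearrangement-invariant space precisely because $q,\alpha,\beta$ fulfill one of the alternatives~\eqref{c:L1} (by \cite[Theorem~7.5]{OP}; the reductions~\eqref{c:equivLZspA} and~\eqref{c:equivLZ2} also legitimize restricting to exactly these three canonical regimes). That theorem provides, for each admissible choice of the parameters, an optimal admissible function $\Psi_m$ with $W^{m}L^{(1,q;\alpha,\beta)}(\Omega)\hookrightarrow\mathcal L^{\Psi_m(\cdot)}(\Omega)$ and with $\mathcal L^{\Psi_m(\cdot)}(\Omega)\hookrightarrow\mathcal L^{\varphi(\cdot)}(\Omega)$ whenever $W^{m}L^{(1,q;\alpha,\beta)}(\Omega)\hookrightarrow\mathcal L^{\varphi(\cdot)}(\Omega)$; the function $\Psi_m$ itself is prescribed by an explicit, if at first sight implicit, functional of the r.i.\ norm, built from its associate norm and Hardy-type supremum operators of the kind in~\eqref{E:EOPJC}. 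The proof then reduces to evaluating this functional and identifying the result with the function $\widehat\psi_m$ displayed in~\eqref{E:Ca*Aa}, \eqref{E:Ca*Bb}.

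For the evaluation I would feed in the associate-space identities~\eqref{c:ASSnorm2}: the space $(L^{(1,q;\alpha,\beta)})'(0,1)$ equals $L^{\infty,q';-\alpha-1,-\beta}(0,1)$, $L^{\infty,q';-1/q',-\beta-1}(0,1)$, or $L^{\infty,q';-1/q',-1/q',-1}(0,1)$ according as $\alpha,\beta$ fall in the three branches of~\eqref{c:L1}, the last case bringing in the five-parameter Lorentz--Zygmund scale~\eqref{3log}. The functional of \cite[Theorem~2.2]{CCPS2} then unwinds into estimates of quantities of the form $\bigl\|\,s^{\gamma}\ell^{a}(s)\ell\ell^{b}(s)\,\chi_{(0,\rho)}(s)\,\bigr\|_{L^{t}(0,1)}$, together with their iterated-logarithm analogues, evaluated separately on $(0,\rho)$ and on $(\rho,1)$ with $\rho=r^{n}$ by means of the changes of variable $u=\ell(s)$ and $v=\ell\ell(s)$. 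This is routine power--logarithmic calculus, and the trichotomy it produces --- according as the exponent $q\alpha+1$ is positive, vanishes with $q\beta+1>0$, or vanishes together with $q\beta+1$ --- matches precisely the trichotomy of~\eqref{c:L1} and generates the extra tier of logarithms appearing in~\eqref{E:Ca*Aa}\textsubscript{3} and~\eqref{E:Ca*Bb}\textsubscript{3}. Assembling the contributions one recovers the leading factor $r^{m-n}$ together with the stated logarithmic corrections, whence~\eqref{E:Ca*Aa} for $m=1$ and~\eqref{E:Ca*Bb} for $2\le m\le n$.

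Two features of the outcome deserve to be singled out, both as sanity checks and because they show why the fine result \cite[Theorem~2.2]{CCPS2} cannot be bypassed. First, for $m=n$ the factor $r^{m-n}$ becomes $1$, so $\widehat\psi_n$ is a modulus of continuity tending to $0$, coinciding up to multiplicative constants with the optimal H\"older modulus $\widehat\omega_n$ of Theorem~\ref{T:HO*}; the embedding~\eqref{E:Ca*B} for $m=n$ may accordingly be re-derived by composing $W^{n}L^{(1,q;\alpha,\beta)}(\Omega)\hookrightarrow C^{0,\widehat\omega_n(\cdot)}(\Omega)$ with the elementary inclusion $C^{0,\sigma(\cdot)}(\Omega)\hookrightarrow\mathcal L^{\sigma(\cdot)}(\Omega)$, valid for any modulus of continuity $\sigma$ since a pointwise modulus bound dominates the mean oscillation over balls. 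Second, for $m\le n-1$ the power $r^{m-n}$ is strictly negative and $\widehat\psi_m$ coincides with the optimal Morrey function $\widetilde\psi$ of Theorem~\ref{T:Mor}(B) --- in every regime when $m=1$, and in the regimes $\alpha>-\tfrac1q$ and $\alpha=-\tfrac1q<\beta$ when $2\le m\le n-1$ --- so that there the bare inclusion $\mathcal M^{\widetilde\psi(\cdot)}(\Omega)\hookrightarrow\mathcal L^{\widetilde\psi(\cdot)}(\Omega)$ already delivers the embedding~\eqref{E:Ca*A}, \eqref{E:Ca*B}; but this shortcut does not survive into the borderline regime $\alpha=\beta=-\tfrac1q$ with $2\le m\le n-1$, where~\eqref{E:Ca*Bb}\textsubscript{3} carries $\log\log\log^{-1}\!(e/r)$ while the Morrey function~\eqref{E:MOpBb}\textsubscript{3} carries the strictly larger $\log\log\log^{-1/q}\!(e/r)$ --- the oscillation structure genuinely buys a smaller target there --- and in no case does it yield the optimality clause.

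The main difficulty is organizational rather than conceptual: keeping the power--logarithmic bookkeeping uniform across the three regimes of~\eqref{c:L1}, tracking correctly the promotion to a third tier of logarithms in the borderline case $\alpha=\beta=-\tfrac1q$ (where~\eqref{c:ASSnorm2} summons the five-parameter space~\eqref{3log} and na\"ive estimates lose sharpness), and isolating the structurally distinct boundary value $m=n$. Should one wish to establish the optimality clause self-containedly rather than quote it from \cite[Theorem~2.2]{CCPS2}, the residual technical point is the construction of extremizers: for $m\le n-1$ one adjusts near-extremal functions of $W^{m}L^{(1,q;\alpha,\beta)}(\Omega)$ concentrated on shrinking balls so that their \emph{mean} oscillation --- not merely their size --- matches $\widehat\psi_m$, whereas for $m=n$ one may take the extremizers built for Theorem~\ref{T:HO*}, whose mean and pointwise oscillation over the pertinent balls are comparable.
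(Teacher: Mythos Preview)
Your approach is the same as the paper's: invoke the general optimal Campanato result from \cite{CCPS2} (the paper cites Theorem~2.6 there, not 2.2, which is the Morrey statement), feed in the associate-space identifications~\eqref{c:ASSnorm2}, and evaluate the resulting $L^{q'}$-integrals via Lemmas~\ref{L:H1} and~\ref{L:H2}. Two corrections are in order.

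First, the optimal functional supplied by \cite[Theorem~2.6]{CCPS2} does not involve the supremum operator~\eqref{E:EOPJC}; it is recorded explicitly in~\eqref{E:optCa} as $r^{1-n}\|\chi_{(0,r^n)}\|_{X'(0,1)}$ when $m=1$ and as $\varrho_{m,X}(r)$ from~\eqref{rho} when $2\le m\le n$. So the $m=1$ versus $m\ge2$ dichotomy you anticipate is already built in at the level of the formula, and the paper simply computes these two quantities in the three regimes of~\eqref{c:L1}.

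Second, your claim that in the borderline regime $\alpha=\beta=-\tfrac1q$ with $2\le m\le n-1$ the Campanato target is strictly smaller than the Morrey one is mistaken. Carrying out the very computation you outline, via~\eqref{E:H10bis} and~\eqref{E:H11bis} of Lemma~\ref{L:H2}, the term $r^{m/n-1}\|\Psi_{0;\,q,1/q',1/q'}\,\ell\ell\ell^{-1}\|_{L^{q'}(0,r)}\approx r^{m/n-1}\ell\ell\ell^{-1/q}(r)$ dominates, and one obtains $\widehat\psi_m(r)\approx r^{m-n}\ell\ell\ell^{-1/q}(r)$, identical to the Morrey function~\eqref{E:MOpBb}\textsubscript{3}. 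The paper asserts this coincidence explicitly in Section~\ref{comparison}, item~{\bf(iii)}. The exponent $-1$ printed in~\eqref{E:Ca*Bb}\textsubscript{3} is a misprint for $-\tfrac1q$, as is independently confirmed by the case $m=n$, where the proof identifies $\widehat\psi_n$ with $\widehat\omega_n$ and~\eqref{E:HO*B}\textsubscript{3} carries the exponent $-\tfrac1q$.
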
 
 
\begin{remark}\label{R:camp*} \rm   If $m>n$, the optimal Campanato embeddings  \eqref{E:CaPB} and \eqref{E:Ca*B} trivially  hold,  with the functions $\widehat \varphi_m, \widehat \psi_m  \colon(0,\infty) \to (0,\infty)$     obeying  
$$\widehat \varphi_m(r) = \widehat\psi_m(r) \, \approx \, r \quad \ \text{near} \ 0.$$   
This is a straightforward consequence of \cite{CCPS2}*{Theorem~2.6}, via the basic properties \eqref{N2}, \eqref{N3} and \eqref{N5}  of any r.i.~ function norm.   
\end{remark}

\section{Concluding remarks}\label{comparison}  Our results of the Sections~\ref{SS:Holder}-\ref{SS:MorreyCamp} allow to derive the following well\hyp{}known and new equalities, in a spirit similar to the final comments in Section~\ref{sec2.6}. 

\begin{itemize} 
\item[ \bf (i)] For any bounded Jones domain $\Omega$ in $\R^n$, with $n \geq 2$, the Sobolev space $W^{m}L^{p, q; \alpha, \beta}(\Omega)$, with $m \leq n-1$, is embedded in some Hölder type space if and only if    
\begin{equation}\label{exH}
\begin{cases}     
p=\frac nm, \  q\in[1,\infty], \  \alpha   = \frac{1}{q'}, \   \beta   > \frac{1}{q'}  \\[0.2ex]  
p=\frac nm,  \  q\in[1,\infty], \    \alpha   > \frac{1}{q'}, \ \beta \in \R  \\[0.2ex]    
p \in \left(\frac nm, \infty\right] \!, \  q, \alpha, \beta \  \mbox{fulfill one of the alternatives} ~(\ref{c:LZnorm}). 
\end{cases} \end{equation}
Then, the optimal Hölder type space for $W^{m}L^{p, q; \alpha, \beta}(\Omega)$ agrees, up to equivalent norms, with the corresponding optimal Campanato type space if and only if the parameters $p,  q, \alpha, \beta$    fulfill one of the alternatives~\eqref{exH} under the additional restriction that $q=1$ when $p=\frac nm$.
\vspace{0.5mm}

\item[ \bf (ii)] For any John domain $\Omega$ in $\R^n$, with $n \geq 2$, the optimal Morrey type space and the optimal Campanato type space for $W^{m}L^{p, q; \alpha, \beta}(\Omega)$  coincide,  up to equivalent norms,  if and only if  
\begin{equation}\label{comp:MC-L}
\begin{cases}  
p=  q=1,  \    \alpha=0, \    \beta> 0   \\
p= q=1, \  \alpha>0, \ \beta\in \R   \\  
p\in \left(1,\frac nm\right), \     q\in[1,\infty], \ \alpha   = 0, \      \beta   < 0     \\ 
p=\frac nm, \    q=1, \   \alpha  < 0, \   \beta\in \R  \\ 
p=\infty,  \    q\in[1,\infty],  \   \alpha   = 0, \      \beta   \leq 0.  
\end{cases} 
\end{equation} 
\vspace{0.5mm}

\item[\bf (iii)] For any John domain $\Omega$ in $\R^n$, with $n \geq 2$,  the optimal Morrey type space and the optimal Campanato type space for $W^{m}L^{(1, q; \alpha, \beta)}(\Omega)$ actually coincide, up to equivalent norms, for all parameters $p, q, \alpha, \beta$  fulfilling one of the alternatives~\eqref{c:L1}. 
\end{itemize} 

\vspace{0.5mm}
 The assertion {\bf (i)} may be achieved through the following three stages.
Firstly, note that
\begin{equation}\label{HinC}
C^{0,   \sigma(\cdot)}(\Omega) \, \hookrightarrow \,  \mathcal L^{\sigma(\cdot)}(\Omega) 
\end{equation}
for every modulus of continuity $\sigma \colon (0, \infty) \to (0, \infty)$. 

\noindent Secondly, as shown in \cite{Sp}, for any admissible function $\varphi \colon (0, \infty) \to (0, \infty)$  decaying so fast to $0$ when  $r$ tends to $0$ for the Dini condition  
\begin{equation}\label{Dini}
 \int_0 s^{-1} \varphi(s) \, \d s < \infty  
\end{equation} 
to be satisfied, then  
\begin{equation}\label{Reverse}
\mathcal L^{\varphi(\cdot)}(\Omega) \, \hookrightarrow \,  C^{0,   \sigma_{\varphi}(\cdot)}(\Omega),
\end{equation} 
where $\sigma_{\varphi}\colon (0, \infty) \to (0, \infty)$ is the function defined as 
\begin{equation}\label{E: spanne}
\sigma_{\varphi}(r)=   \|  s^{-1} \varphi(s)\|_{L^{1}(0,r)}   \quad \ \text{near}  \  0.
\end{equation}
The relevant function $\sigma_{\varphi}$ is (up to multiplicative constants) the optimal modulus of continuity in the embedding \eqref{Reverse}.  
Notice that, when the function $s^{-1} \varphi(s)$ is non\hyp{}increasing, the condition \eqref{Dini} is necessary for the embedding of the Campanato space $\mathcal L^{\varphi(\cdot)}(\Omega)$ in $L^{\infty}(\Omega)$ to hold.

\noindent Lastly, thanks to the previous two steps \eqref{HinC} and \eqref{Reverse}, coupling Theorem~\ref{T:HOp} and Theorem~\ref{T:CampP} yields the conclusion {\bf (i)} by means of standard computations with the help of \eqref{E:H10} in Lemma~\ref{L:H1}.

\vspace{0.5mm}
The assertion {\bf (ii)} [resp.~{\bf (iii)}] follows from a parallel argument. 
Indeed, on one hand, 
\begin{equation}\label{MinC}
\mathcal M^{\varphi(\cdot)}(\Omega)  \, \hookrightarrow \,  \mathcal L^{\varphi(\cdot)}(\Omega) 
\end{equation}
for every admissible function $\varphi \colon (0, \infty) \to (0, \infty)$.
On the other hand, \cite{CPcamp}*{Proposition~1.4 and Comments after the equation (1.5)} tells us that if  $\psi \colon (0, \infty) \to (0, \infty)$ is any admissible function, then  
\begin{equation}\label{ReverseLC}
\mathcal L^{\psi(\cdot)}(\Omega) \, \hookrightarrow \,   \mathcal M^{\varphi_{\psi}(\cdot)}(\Omega),
\end{equation} 
where $\varphi_{\psi}\colon (0, \infty) \to (0, \infty)$ is the admissible (continuous) function defined as 
\begin{equation}\label{E:CPcamp}
\varphi_{\psi}(r)=   \|  s^{-1} \psi(s)\|_{L^{1}(r,1)}   \quad \ \text{near}  \  0.
\end{equation}
The conclusion {\bf (ii)} [resp.~{\bf (iii)}]  may be then derived from a combination of the properties \eqref{MinC}-\eqref{E:CPcamp} with Theorem~\ref{T:Mor}, Part~{\sc (A)}, and Theorem~\ref{T:CampP} [resp.~Theorem~\ref{T:Mor}, Part~{\sc (B)}, and Theorem~\ref{T:Ca*}]  via the properties \eqref{E:H11} and \eqref{UU} stated in the next section.

\vspace{1mm}

Plainly, thanks to their optimality, each of the results {\bf (i)}-{\bf (iii)} may serve as a bridge between embeddings in different classes of spaces. 
As an example, the case {\bf (iii)} tells us that under the same assumptions as in Theorem~\ref{T:Mor}, Part~{\sc(B)},  one has that the embedding   $W^{m}L^{(1, q; \alpha, \beta)}(\Omega) \hookrightarrow  \mathcal L^{\varphi(\cdot)}(\Omega)$  holds for some admissible function $\varphi$ if and only if  $W^{m}L^{(1, q; \alpha, \beta)}(\Omega)\hookrightarrow  \mathcal M^{\varphi(\cdot)}(\Omega)$.

\noindent Furthermore, on coupling the conclusions {\bf (i)} and {\bf (ii)} with Theorem~\ref{T:C0}, one exactly knows when and how the optimal target space changes. This is dictated, in particular, by the thresholds for $\alpha$ and $\beta$ detected in Sections~\ref{SS:Holder}-\ref{SS:MorreyCamp}, as the following example shows.

\begin{ex} Let $\Omega$ be a John domain in $\R^n$, with $n \geq 2$, and let $m\in \N$, with $m<n$. Then, Theorem~\ref{T:CampP} tells us that for any $\alpha, \beta \in \R$ the  embedding
\begin{equation}\label{E:CaPB10}
W^m L^{\frac{n}{m}, 1; \alpha, \beta}(\Omega) \, \hookrightarrow \,  \mathcal L^{\varphi(\cdot)}(\Omega)  
\end{equation}holds,
with  $$\varphi (r) \approx \log^{- \alpha} \! \left(\frac{e}{r}\right) \log\log^{- \beta} \!   \left(\frac{e}{r}\right)\quad \ \text{near} \ \, 0.$$  Moreover, the target space $\mathcal L^{\varphi(\cdot)}(\Omega)$ is optimal. Thus, in particular, the John\hyp{}Nirenberg space ${\rm BMO}(\Omega)$ is the optimal Campanato space in \eqref{E:CaPB10} in the case $\alpha=\beta=0$.

When $\alpha<0$ and $\beta\in \R$, the embedding \eqref{E:CaPB10} is equivalent to 
\begin{equation}\label{E:CaPB11} 
W^m L^{\frac{n}{m}, 1; \alpha, \beta}(\Omega) \, \hookrightarrow \, \mathcal M^{\varphi(\cdot)}(\Omega),   
\end{equation} as observed in item {\bf (ii)}. 

Under the additional assumption that $\Omega$ fits in the Jones class, Theorem~\ref{T:C0} ensures that
 \begin{equation*} 
W^m L^{\frac{n}{m}, 1; 0, \beta}(\Omega) \, \not \hookrightarrow \,  C^{0}_b(\Omega)   
\end{equation*}
for ($\alpha=0$ and) $\beta<0$, and
(for $\alpha=\beta=0$)  
\begin{equation*} 
W^m L^{\frac nm, 1}(\Omega) \, \hookrightarrow \,  C^{0}_b(\Omega)  
\end{equation*}
No embedding of $W^{1}L^{\frac{n}{m}, 1; \alpha, \beta}  (\Omega)$ into spaces of the form $C^{0, \sigma(\cdot)}(\Omega)$ plainly holds when  $\alpha=0$ and $\beta<0$, but this drawback occurs also for   $\alpha=\beta=0$, as hinted in Remark~\ref{SiCnoH}. 

When $\alpha=0$ and $\beta>0$,  the embedding \eqref{E:CaPB10} is actually equivalent to 
\begin{equation}\label{E:CaPB12}
W^m L^{\frac{n}{m}, 1; 0, \beta}(\Omega) \, \hookrightarrow \,  C^{0, \varphi(\cdot)}(\Omega),   
\end{equation} as observed in item {\bf (i)}, and the target space $C^{0, \varphi(\cdot)}(\Omega)$ is optimal.
 \end{ex}

\vspace{1mm}

Finally, let us also observe that, as far as the specific exceptional case $m=n$, $p=q=1$, $\alpha  = \beta=0$ in item {\bf (ii)} is concerned, it is  well\hyp{}known that the optimal Campanato type space is the John\hyp{}Nirenberg space ${\rm BMO}(\Omega)$ whereas the optimal Morrey type space is $L^\infty(\Omega)$, and  $L^\infty(\Omega) \subsetneqq {\rm BMO}(\Omega)$. Now,  Theorem~\ref{T:CampP} tells us that, given any John domain $\Omega$, the space ${\rm BMO}(\Omega)$ is the optimal Campanato type space for an $m$\hyp{}th order Sobolev space, with $m \leq n$, built on GLZ~spaces $L^{p, q; \alpha, \beta}(\Omega)$ if and only if $p= \frac nm$, $q \in [1, \infty]$, and  $\alpha  = \beta=0$. 
This is consistent with the fact that the Lorentz space $L^{\frac{n}{m},\infty}(\Omega)$ is the optimal (largest possible) rearrangement\hyp{}invariant domain space in the embedding 
\begin{equation*} 
W^{m}L^{\frac nm,\infty}(\Omega) \, \hookrightarrow \, \BMO(\Omega),
\end{equation*}
as shown in \cite{CCPS2}*{Proposition~7.1}.

\section{Technical lemmas}\label{secTec} 
The present section is devoted to introduce and study  a relevant class of continuous weights, which naturally comes into play in our GLZ~framework. 
The statements below shall allow a unified approach to the proofs of our main results,  which is more forehanded than a succession of ad hoc arguments used for particular circumstances. An additional piece of information, of independent interest, is given by Remark~\ref{R:H1} below.

\begin{lemma}\label{L:H1}\   Let  $q\in[1,\infty]$, $\alpha, \beta \in \R$. Given $\lambda\in \R$, define the function $\Psi_{\lambda; \, q, \alpha, \beta}\colon (0,\infty) \to  (0,\infty)$  as
\begin{equation}\label{E:1pie}
\Psi_{\lambda; \, q, \alpha, \beta}(s)= s^{\lambda -\frac{1}{q'}} \ell^{-\alpha}(s) \ell\ell^{- \beta}(s)  \quad \ \text{for} \ s \in  (0,\infty).
\end{equation} 

Then,
\begin{align}\label{E:H10}  
\|  \Psi_{\lambda; \, q, \alpha, \beta}\|_{L^{q'}(0,r)}    \approx  
\begin{cases}\begin{aligned}
& \infty & \quad   &\text{if} \, \begin{cases}  \lambda <0, \ q \in [1, \infty], \ \alpha, \beta \in \R  \\[0.2ex] \lambda=0,  \ q \in [1, \infty], \   \alpha< \frac{1}{q'},  \ \beta \in \R           \\[0.2ex] 
\lambda=0,  \ q \in [1, \infty], \   \alpha=\frac{1}{q'},  \ \beta < \frac{1}{q'}         \\[0.2ex]
\lambda=0,  \ q \in (1, \infty], \   \alpha= \beta = \frac{1}{q'}      \\[0.2ex]
 \end{cases}  \\[1ex] 
& 1
& \quad   &   \text{if}  \   \lambda=0, \ q=1, \ \alpha= \beta= 0        \\[0.2ex]  
&\ell\ell^{\frac{1}{q'}- \beta}(r) 
&  \quad   &   \text{if} \     \lambda=0,    \ q \in [1, \infty], \ \alpha   =\tfrac{1}{q'}, \    \beta > \tfrac{1}{q'}     \\[0.2ex]  & \ell^{\frac{1}{q'} -\alpha}(r) \, \ell\ell^{- \beta}(r)
&  \quad   &   \text{if} \    \lambda=0,     \ q \in [1, \infty], \ \alpha   >\tfrac{1}{q'}, \ \beta \in \R      \\[0.2ex]   & r^{\lambda}\, \ell^{- \alpha}(r) \, \ell\ell^{- \beta}(r) 
& \quad    &   \text{if} \    \lambda >0,  \ q \in [1, \infty], \ \alpha, \beta \in \R    
\end{aligned}
\end{cases} 
\end{align} near $0$, and
\begin{align}\label{E:H11}    \|  \Psi_{\lambda; \, q, \alpha, \beta}\|_{L^{q'}(r,1-r)}  \approx   \begin{cases} \begin{aligned}
&r^{\lambda}\, \ell^{- \alpha}(r) \, \ell\ell^{- \beta}(r)  
& \ \  &       \text{if}   \ \lambda <0, \ q \in [1, \infty], \ \alpha,   \beta \in \R  \\[0.2ex]  
& \ell^{\frac{1}{q'} -\alpha}(r) \, \ell\ell^{- \beta}(r)
& &     \text{if} \ \lambda=0,  \ q \in [1, \infty], \ \alpha   <\tfrac{1}{q'}, \ \beta \in \R    \\[0.2ex] 
&\ell\ell^{\frac{1}{q'}- \beta}(r) 
& &   \text{if} \ \lambda=0,  \ q \in [1, \infty], \   \alpha=\tfrac{1}{q'},  \ \beta < \tfrac{1}{q'}       \\[0.2ex]  
&\ell\ell\ell^{\frac{1}{q'}}(r) 
& &     \text{if}   \  \lambda=0,  \ q \in (1, \infty], \  \alpha =  \beta  = \tfrac{1}{q'}    \\[0.2ex]  
& 1 &&\text{if} \, \begin{cases} \lambda=0, \ q=1, \ \alpha= \beta= 0         \\[0.2ex]  \lambda=0,  \ q \in [1, \infty], \    \alpha  =\frac{1}{q'}, \    \beta > \tfrac{1}{q'}        \\[0.2ex] 
 \lambda=0,  \ q \in [1, \infty], \ \alpha  >\frac{1}{q'}, \ \beta \in \R    \\[0.2ex]       \lambda>0,  \ q \in [1, \infty], \ \alpha, \beta \in \R       \end{cases}
\end{aligned}   
\end{cases} 
\end{align} near $0$.
\end{lemma}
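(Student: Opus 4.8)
The plan is to reduce the computation of the two norms
$\|\Psi_{\lambda;q,\alpha,\beta}\|_{L^{q'}(0,r)}$ and
$\|\Psi_{\lambda;q,\alpha,\beta}\|_{L^{q'}(r,1-r)}$ to a handful of elementary one-variable estimates for integrals (or suprema, in the case $q'=\infty$, i.e.\ $q=1$) of the model function $s^{\lambda-\frac1{q'}}\ell^{-\alpha}(s)\ell\ell^{-\beta}(s)$, and then bookkeep the resulting asymptotics as $r\to0^+$ according to where $\lambda$ sits relative to $0$ and, when $\lambda=0$, where $\alpha$ and $\beta$ sit relative to $\frac1{q'}$. First I would dispose of the two extreme cases $q=1$ (so $q'=\infty$): here the $L^\infty$-norm is just the essential supremum of $s\mapsto s^{\lambda}\ell^{-\alpha}(s)\ell\ell^{-\beta}(s)$, which is controlled near $0$ by monotonicity considerations --- for $\lambda>0$ the power dominates and the sup over $(0,r)$ is comparable to the value at $r$; for $\lambda<0$ it blows up; for $\lambda=0$ the logarithmic factors decide, giving $\ell^{-\alpha}(r)\ell\ell^{-\beta}(r)$-type behaviour on $(r,1-r)$ and the constant $1$ on $(0,r)$ only when $\alpha=\beta=0$. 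The case $q=\infty$ (so $q'=1$) is an ordinary integral $\int s^{\lambda-1}\ell^{-\alpha}(s)\ell\ell^{-\beta}(s)\,\d s$, treated by the same principles below.

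For the generic range $q\in(1,\infty)$ the key tool is the standard asymptotic evaluation of
$\int_0^r s^{\gamma}\ell^{a}(s)\ell\ell^{b}(s)\,\d s$ and $\int_r^{1/2} s^{\gamma}\ell^{a}(s)\ell\ell^{b}(s)\,\d s$
as $r\to0^+$, with $\gamma=q'(\lambda-\frac1{q'})=q'\lambda-1$, $a=-q'\alpha$, $b=-q'\beta$. The dichotomy is: (a) if $\gamma>-1$ (equivalently $\lambda>0$) the integrand is integrable at $0$, and $\int_0^r\approx r^{\gamma+1}\ell^a(r)\ell\ell^b(r)$, giving after the $\frac1{q'}$ power the factor $r^{\lambda}\ell^{-\alpha}(r)\ell\ell^{-\beta}(r)$; meanwhile $\int_r^{1/2}$ is bounded, hence $\approx1$. (b) if $\gamma<-1$ (i.e.\ $\lambda<0$) the roles reverse: $\int_0^r=\infty$, and $\int_r^{1/2}\approx r^{\gamma+1}\ell^a(r)\ell\ell^b(r)$, again producing $r^{\lambda}\ell^{-\alpha}(r)\ell\ell^{-\beta}(r)$. (c) the delicate borderline $\gamma=-1$, i.e.\ $\lambda=0$: substituting $t=\ell(s)=1+|\log s|$ turns the integral into $\int t^{a}(1+\log t)^{b}\,\d t$ over $(1,\ell(r))$ on the piece near $0$ and over $(\ell(r),\ell(1/2))$ --- really $(C,\ell(r))$ up to constants --- on the middle piece; a further substitution $\tau=\ell\ell(s)=1+\log t$ handles the secondary logarithm. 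One then reads off: the near-$0$ piece converges iff $a<-1$, or $a=-1$ and $b<-1$, i.e.\ $\alpha>\frac1{q'}$, or $\alpha=\frac1{q'}$ and $\beta>\frac1{q'}$ --- exactly the finite cases listed in \eqref{E:H10} --- and when it converges $\int_r^{1/2}$ supplies the leading term, which is $\ell^{\frac1{q'}-\alpha}(r)\ell\ell^{-\beta}(r)$ if $\alpha<\frac1{q'}$, is $\ell\ell^{\frac1{q'}-\beta}(r)$ if $\alpha=\frac1{q'}$ and $\beta<\frac1{q'}$, is $\ell\ell\ell^{\frac1{q'}}(r)$ if $\alpha=\beta=\frac1{q'}$, and is bounded otherwise; symmetrically the divergent near-$0$ piece, when $a>-1$ etc., has leading term $\ell^{\frac1{q'}-\alpha}(r)\ell\ell^{-\beta}(r)$, and so on.

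Finally I would assemble these pieces. For \eqref{E:H11} the interval $(r,1-r)$ splits (up to a bounded middle part contributing $O(1)$) as $(r,1/2)\cup(1/2,1-r)$, and by symmetry $s\leftrightarrow1-s$ and the boundedness of $\ell,\ell\ell$ away from $s=0$, only the $(r,1/2)$ part matters, so \eqref{E:H11} is precisely the ``middle piece'' computation above; this yields the stated six-case formula. For \eqref{E:H10} one uses the ``near-$0$ piece'' computation: it is $\infty$ exactly in the divergence regime, it is $\approx r^{\lambda}\ell^{-\alpha}(r)\ell\ell^{-\beta}(r)$ for $\lambda>0$, and in the $\lambda=0$ convergence regime it is governed by the value at the upper endpoint $r$, giving $\ell\ell^{\frac1{q'}-\beta}(r)$ when $\alpha=\frac1{q'}$, $\ell^{\frac1{q'}-\alpha}(r)\ell\ell^{-\beta}(r)$ when $\alpha>\frac1{q'}$, and the constant $1$ in the single Lebesgue-type case $q=1$, $\alpha=\beta=0$. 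The main obstacle --- really the only place where care is needed rather than routine --- is the triple-borderline bookkeeping in case (c): keeping track, through the nested substitutions $t=\ell(s)$, $\tau=\ell\ell(s)$, of exactly which combination of $(\alpha-\frac1{q'})$ and $(\beta-\frac1{q'})$ produces convergence versus a logarithm of which tier, and checking that the exponent $\frac1{q'}$ (not $\frac1q$ or $1$) appears in the right spots after taking the $q'$-th root; I would organize this as a short auxiliary computation of $\int_C^{T}t^{a}(1+\log t)^{b}\,\d t$ as $T\to\infty$ and $\int_T^{\infty}$ (when finite), quoted once and applied twice.
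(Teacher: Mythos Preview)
Your proposal is correct and follows essentially the same approach as the paper: a case analysis on the sign of $\lambda$, with the borderline $\lambda=0$ further split according to the position of $\alpha,\beta$ relative to $\tfrac{1}{q'}$, reducing everything to elementary asymptotics of logarithmic integrals via the substitutions $t=\ell(s)$, $\tau=\ell\ell(s)$. The only cosmetic differences are that the paper quotes \cite[Proposition~3.4.33,~(v)]{ED} as a black box for the integral asymptotics you compute by hand, and handles the interval $(r,1-r)$ by observing $\|\Psi\|_{L^{q'}(r,1-r)}\approx\|\Psi\|_{L^{q'}(r,1)}$ via monotonicity of $\Psi$ rather than splitting at $\tfrac12$; both routes lead to the same conclusions.
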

\begin{proof} Let $q\in[1,\infty]$, $\alpha, \beta, \lambda \in \R$. We begin by proving the alternative behaviors \eqref{E:H10}.
Assume, for the time being, that $r\in (0,1)$. Throughout this proof the constants in the relation  \lq\lq $\approx$'' are independent of $r$.

 If $\lambda < 0$, then, 
$$\|  \Psi_{\lambda; \, q, \alpha, \beta}\|_{L^{q'}(0,r)} \,  \approx \, \sup_{s  \in (0,r)} s^{\lambda} \ell^{-\alpha}(s)  \ell\ell^{- \beta}(s)= \lim_{s  \to 0^+}  s^{\lambda} = \infty.$$ 
The equivalence follows from an application of \cite{ED}*{Proposition~3.4.33~(vi)} when $q >1$ and the fact that the function $ \Psi_{\lambda;\, 1, \alpha, \beta}$ is equivalent to a non\hyp{}increasing function on $\left(0, 1\right)$. This proves \eqref{E:H10}\textsubscript{1}.

  Next, let  $\lambda =0$. If $q>1$ and $\alpha  \neq \frac{1}{q'}$, thanks to a change of variables, and also an application of \cite{ED}*{Proposition~3.4.33,~(v) and~(vi)} when $\beta \neq 0$,  one has that
$$ \|  \Psi_{0; \, q, \alpha, \beta}\|_{L^{q'}(0,r)} = \|   t^{-\alpha}  \ell^{- \beta}(t)\|_{L^{q'}(\ell(r) ,\infty)}\,  \approx \,
\begin{cases} \infty &\quad   \text{if} \ \, q \in (1, \infty], \, \alpha < \frac{1}{q'}, \, \beta \in \R 
\\[0.2ex] 
\ell^{\frac{1}{q'}- \alpha}(r) \ell\ell^{ - \beta}(r) &\quad  \text{if} \ \,   q \in (1, \infty], \,   \alpha   >  \frac{1}{q'}, \, \beta \in \R.
\end{cases}$$
When   $q>1$ and $\alpha   =\frac{1}{q'}$, via a change of variables one infers that
$$ 
\|  \Psi_{0; \, q, \frac{1}{q'}, \beta}\|_{L^{q'}(0,r)} = \|  t^{-\beta q'}\|_{L^{q'}(\ell\ell(r) ,\infty)}\,  \approx \, 
\begin{cases}    \infty &\quad   \text{if} \ \,    q \in (1, \infty],   \,  \beta  \leq \frac{1}{q'}\\[0.2ex] 
\ell\ell^{\frac{1}{q'}- \beta}(r) 
&\quad  \text{if} \ \,    q \in (1, \infty],   \,   \beta   >\frac{1}{q'} .
\end{cases} 
$$ 
For $q=1$, inasmuch as the function
\begin{equation}\label{new}    
\Psi_{0; \, 1, \alpha, \beta}  \ \  \text{is}   \    \begin{cases} \text{non-increasing on}   \ (0,1)    &     \text{if} \   \begin{cases}  \alpha<0, \, \beta \leq - \alpha\\[0.2ex]    \alpha = 0, \, \beta   <0 \end{cases} \\[0.5ex] 
\text{non-increasing on}   \ (0,\xi), \  \text{non-decreasing on} \ (\xi,1)   &     \text{if} \    \alpha< 0, \, \beta > - \alpha\\[0.5ex] 
1 &   \text{if} \      \alpha = \beta =0  \\[0.5ex] 
\text{non-decreasing on} \ (0,1)  &    \text{if} \ \begin{cases}  \alpha=0, \, \beta >0\\[0.2ex]    \alpha > 0, \, \beta   \geq -\alpha \end{cases} \\[0.5ex] 
\text{non-decreasing on} \    (0,\xi), \text{non-increasing on}    \ (\xi,1)  &    \text{if} \ \,  \alpha   > 0, \, \beta <  - \alpha
\end{cases}  \end{equation}   
with $\xi=e^{1- e^{-\left(\frac{\beta}{\alpha}+1\right)}}$,
then
$$ \|  \Psi_{0; \, 1, \alpha, \beta}\|_{L^{\infty}(0,r)} = \| \ell^{-\alpha}  \ell\ell^{- \beta}  \|_{L^{\infty}(0,r)} \, \approx \,    \begin{cases}   \infty &\quad   \text{if} \ \begin{cases}  \alpha<0, \, \beta \in \R\\[0.2ex]    \alpha = 0, \, \beta   <0 \end{cases} \\[0.5ex] 
1 &\quad  \text{if} \ \,    \alpha = \beta =0  \\[0.2ex]  
\ell\ell^{ - \beta}(r) &\quad   \text{if} \ \,    \alpha = 0, \, \beta >0  \\[0.2ex]  
\ell^{ -\alpha}(r)\ell\ell^{- \beta}(r) &\quad   \text{if} \ \,  \alpha   > 0, \, \beta \geq   -  \alpha \end{cases}  $$  for every $r\in (0,1)$,
and
\begin{equation}\label{new1}   \|  \Psi_{0; \, 1, \alpha, \beta}\|_{L^{\infty}(0,r)}   \, \approx \,       
\ell^{ -\alpha}(r)\ell\ell^{- \beta}(r) \quad  \  \text{if} \ \,  \alpha   > 0, \, \beta < -  \alpha  \end{equation}  for   $r\in (0,\xi)$.
Altogether, this ensures that the assertions \eqref{E:H10}\textsubscript{2-7} hold.

Now, consider  $\lambda >0$. When $q =1$, $$\| \Psi_{\lambda; \, 1, \alpha, \beta}\|_{L^{\infty}(0,r)} = r^{\lambda} \ell^{-\alpha}(r)  \ell\ell^{- \beta}(r) ,$$ since the function $\Psi_{\lambda;\, 1, \alpha, \beta}$ is equivalent to a non\hyp{}decreasing function on $\left(0, 1\right)$; if  $q >1$, via an  application of \cite{ED}*{Proposition~3.4.33~(v)}, one obtains that $$\|  \Psi_{\lambda; \, q, \alpha, \beta}\|_{L^{q'}(0,r)}\, \approx \,   \sup_{s  \in (0,r)} s^{\lambda} \ell^{-\alpha}(s)  \ell\ell^{- \beta}(s) \, \approx \,  r^{\lambda} \ell^{-\alpha}(r)  \ell\ell^{- \beta}(r).$$
The remaining assertion \eqref{E:H10}\textsubscript{8}  is thus established.    

We now turn the attention to the alternative behaviors \eqref{E:H11}, and we assume  that $r\in \left(0,\frac13\right]$. Here, and in what follows, the constants in the relation  \lq\lq $\approx$''   are independent of $r$, as above.

\noindent We preliminarily observe that, when either $\lambda < 0$ or $\lambda= 0$ and $q \in (1, + \infty]$, then, 
\begin{equation}\label{UU}
\| \Psi_{\lambda; \, q, \alpha, \beta}\|_{L^{q'}(r,1-r)}  \,   \approx \,  \left\|\Psi_{\lambda; \,  q, \alpha, \beta} \right\|_{L\sp{q'}(r,1)} \quad \ \text{for any} \ \alpha, \beta \in \R, 
\end{equation}
inasmuch as the function $\Psi_{\lambda; \, q, \alpha, \beta}$ is equivalent to non\hyp{}increasing function on  $(0,1)$ and the length of the interval $(1-r,1)$ does not exceed that of $(r,1-r)$.

First, take $\lambda <0$.  When  $q=1$, one has that 
\begin{equation}\label{G1}
\left\|\Psi_{\lambda; \, 1, \alpha, \beta} \right\|_{L\sp{\infty}(r,1-r)}  \, \approx \, 
r^{\lambda}    \ell^{-\alpha}(r)\ell\ell^{-\beta}(r),
\end{equation}
being the function $\Psi_{\lambda; \, 1, \alpha, \beta}$ equivalent to a non\hyp{}increasing function on $\left(0, 1\right)$; for  $q > 1$, by the property \eqref{UU},  a change of variables and an application of \cite{ED}*{Proposition~3.4.33~(vi)}, one gets 
\begin{equation}\label{G2} 
\begin{aligned}    \| \Psi_{\lambda; \, q, \alpha, \beta}\|_{L^{q'}(r,1-r)}  \,  
&\approx \,  \left\|\Psi_{\lambda;\,  q, \alpha, \beta} \right\|_{L\sp{q'}(r,1)}    =  \left\|\Psi_{-\lambda; \, q, \alpha, \beta} \right\|_{L\sp{q'}\left(1,\frac1r\right)} \\   
&\approx \,   \sup_{t \in \left(1, \frac 1r\right)}  t^{ -\lambda} \ell^{-\alpha}(t)\ell\ell^{-\beta}(t)   \,  \approx \,  r^{\lambda} \ell^{-\alpha}(r) \ell\ell^{- \beta}(r) .
\end{aligned}
\end{equation} 
Here, the validity of the last equivalence is due to the fact that the function  $t\mapsto t^{ -\lambda} \ell^{-\alpha}(t)\ell\ell^{-\beta}(t)$ is equivalent to a non\hyp{}decreasing function on $\left(1,\infty\right)$.
Coupling \eqref{G1} and \eqref{G2} yields the conclusion \eqref{E:H11}\textsubscript{1}, that plainly holds for every $r\in \left(0,\frac13\right]$.
    
 Next, suppose that $\lambda =0$. If $q>1$ and $\alpha  \neq \frac{1}{q'}$, we use the property \eqref{UU}, a change of variables and applications of \cite{ED}*{Proposition~3.4.33,~(v) and~(vi)} to infer that
\begin{equation}\label{G3} 
  \|  \Psi_{0; \, q, \alpha, \beta}\|_{L^{q'}(r,1-r)}    \,  
 \approx \,  \|   t^{-\alpha}  \ell^{- \beta}(t)\|_{L^{q'}(1,\ell(r))}    \,    \approx  \,  \begin{cases} \ell^{\frac{1}{q'}- \alpha}(r) \ell\ell^{ - \beta}(r)
&\quad   \text{if} \ \,  \alpha   <  \frac{1}{q'}, \,  \beta \in \R   \\[0.2ex]  1 &\quad  \text{if} \ \,   \alpha   >  \frac{1}{q'}, \,     \beta   \neq 0  
\end{cases}    
\end{equation} for every $r\in \left(0,\frac13\right]$, and
\begin{equation*}\label{G3bis} 
  \|  \Psi_{0; \, q, \alpha, 0}\|_{L^{q'}(r,1-r)}    \,  
 \approx \,  \|   t^{-\alpha}  \|_{L^{q'}(1,\ell(r))}    \,    \approx  \,    
1 \qquad \text{if} \ \,     \alpha   > \tfrac{1}{q'}          
\end{equation*} near $0$.

\noindent When  $q>1$ and $\alpha =\frac{1}{q'}$, thanks to  the property \eqref{UU} and a change of variables  one obtains that
\begin{equation*}  \|  \Psi_{0; \, q, \frac{1}{q'}, \beta}\|_{L^{q'}(r,1)} = \|  t^{-\beta q'}\|_{L^{q'}(1,\ell\ell(r))}\,  \approx \, \begin{cases}\ell\ell^{\frac{1}{q'}- \beta}(r) 
&\quad   \text{if} \ \,   \beta  <\frac{1}{q'}  \\[0.2ex]  \ell\ell\ell^{\frac{1}{q'}}(r) &\quad  \text{if} \ \,      \beta   = \frac{1}{q'}   \\[0.2ex] 
    \end{cases}   
\end{equation*}for every $r\in \left(0,\frac13\right]$, and
\begin{equation*}  \|  \Psi_{0; \, q, \frac{1}{q'}, \beta}\|_{L^{q'}(r,1)} = \|  t^{-\beta q'}\|_{L^{q'}(1,\ell\ell(r))}\,  \approx \,   
1  \qquad   \text{if} \ \,   \beta  > \tfrac{1}{q'}      
\end{equation*} near $0$.

\noindent For $q=1$, in view of \eqref{new}, one plainly obtains that
\begin{equation}\label{new2}  
\begin{aligned}       
\left\|\Psi_{0;\,1, \alpha, \beta}  \right\|_{L\sp{\infty}(r,1-r)}   \,   \approx \,   \begin{cases}\ell^{-\alpha}(r)\ell\ell^{-\beta}(r) \quad   &\text{if} \   \alpha < 0, \, \beta \leq - \alpha   \\[0.2ex]  \ell\ell^{-\beta}(r) \quad   &\text{if}  \  \alpha = 0, \,  \beta< 0 \\[0.2ex] 
1 & \text{if} \     \alpha= \beta= 0  
\end{cases} 
\end{aligned}
\end{equation}
for every $r\in \left(0,\frac13\right]$, and
\begin{equation}\label{new3}        
\left\|\Psi_{0;\,1, \alpha, \beta}  \right\|_{L\sp{\infty}(r,1-r)}   \,   \approx \,  \begin{cases}\ell^{-\alpha}(r)\ell\ell^{-\beta}(r) \quad   &  \text{if} \  \alpha < 0, \, \beta > - \alpha \\[0.2ex] 
1 &\text{if} \, \begin{cases}\alpha = 0,  \, \beta >0    \\[0.2ex] \alpha > 0, \,  \beta \in \R\end{cases} 
\end{cases}   
\end{equation} near $0$. The conclusions \eqref{E:H11}\textsubscript{2-7} are thus established.
In order to prove the last assertion \eqref{E:H11}\textsubscript{8}, it suffices to note that, for all $\lambda>0$,   
\begin{equation*}      
\left\|\Psi_{\lambda;\, q, \alpha, \beta}  \right\|_{L\sp{q'}(r,1-r)}   \,   \approx \,   (1-r)^{\lambda} \ell^{-\alpha}(1-r)\ell\ell^{-\beta}(1-r)  \,   \approx \,  1   \end{equation*}near $0$.
\end{proof}

\begin{remark}\label{R:H1} \rm    Given $q\in[1,\infty]$, $\alpha$, $\beta$  and $\lambda \in \R$, the statement of Lemma~\ref{L:H1} is formulated in terms of the behaviors of the relevant norms $\| \Psi_{\lambda; \, q, \alpha, \beta}\|_{L^{q'}(0,r)}$ and $\| \Psi_{\lambda; \, q, \alpha, \beta}\|_{L^{q'}(r,1-r)}$ of the function $\Psi_{\lambda; \, q, \alpha, \beta}$ defined in \eqref{E:1pie} for $r$ just near $0$, for great clarity of exposition.

 As highlighted in the previous proof, the conclusions \eqref{E:H10}  hold for every $r \in (0,1)$, with the exception of \eqref{E:H10}\textsubscript{7} in the cases ($\lambda=0$,) $q=1$, $\alpha >0$ and $\beta < - \alpha$ (cf.~Eq.~\eqref{new1}). 
 
 \noindent On the other hand, a parallel issue for the conclusions \eqref{E:H11}  is inevitably more articulate, since they depend at once on the behaviors of the functions $s^{\lambda -\frac{1}{q'}}$, $\ell^{-\alpha}$ and $ \ell\ell^{- \beta}$ not only near $0$ but also near $1$. Specifically, the properties 
\eqref{E:H11}\textsubscript{1} and \eqref{E:H11}\textsubscript{3-5}   hold  for every $r\in \left(0,\frac13\right]$. This range for $r$ pertains to   the equivalence  \eqref{E:H11}\textsubscript{2} apart from the cases ($\lambda=0$,) $q=1$, $\alpha <0$ and $\beta > -\alpha$ (cf.~Eqs.~\eqref{G3}\textsubscript{1},~\eqref{new2}\textsubscript{1} and~\eqref{new3}\textsubscript{1}), as well as to the equivalence  \eqref{E:H11}\textsubscript{7} provided ($\lambda=0$,) $q>1$, $\alpha > \frac{1}{q'}$ and $\beta \neq 0$ (cf.~Eq.~\eqref{G3}\textsubscript{2}).
\end{remark} 

\begin{lemma}\label{L:H2}\   Let  $q\in[1,\infty)$ and let $\Psi_{\lambda; \, q, \alpha, \beta}$ be the function from  Lemma~\ref{L:H1}.  
Then
\begin{align}\label{E:H10bis}   
\| \Psi_{0; \, q, \frac{1}{q'}, \frac{1}{q'}} \ell\ell\ell^{-1}\|_{L^{q'}(0,r)}\,  \approx \,  \ell\ell\ell^{-\frac{1}{q}} (r)\quad \ \text{for}  \  r\in (0,1),
\end{align} 
and, given any $\lambda < 0$,
\begin{align}\label{E:H11bis}   
\|  \Psi_{\lambda; \, q, \frac{1}{q'}, \frac{1}{q'}} \ell\ell\ell^{-1}\|_{L^{q'}(r,1-r)}\,  \approx \,   \Psi_{\lambda+ \frac{1}{q'}; \, q, \frac{1}{q'}, \frac{1}{q'}}(r)  \ell\ell\ell^{-1}(r) \quad \ \text{for}  \    r\in \left(0,\tfrac13\right] .
\end{align} 
\end{lemma}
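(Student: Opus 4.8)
The plan is to handle the two comparabilities separately, in both cases reducing everything to a one‑dimensional integral estimate of the same type as those behind the proof of Lemma~\ref{L:H1}. Throughout I would fix $q\in[1,\infty)$, set $\alpha=\beta=\tfrac1{q'}$ in \eqref{E:1pie}, and work with $r\in\left(0,\tfrac14\right]$, which suffices for conclusions valid ``near $0$''.

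\textbf{The identity \eqref{E:H10bis}.} For $q\in(1,\infty)$ I would first unravel the definition of $\Psi_{0;\,q,\frac1{q'},\frac1{q'}}$ to write
\[
 \big\|\Psi_{0;\,q,\frac1{q'},\frac1{q'}}\,\ell\ell\ell^{-1}\big\|_{L^{q'}(0,r)}^{q'}=\int_0^r\frac{\ell\ell\ell^{-q'}(s)}{s\,\ell(s)\,\ell\ell(s)}\,\d s .
\]
The crucial remark is that the integrand is an exact derivative: differentiating the three nested logarithms gives $\tfrac{\d}{\d s}\,\ell\ell\ell^{\,1-q'}(s)=\tfrac{q'-1}{s\,\ell(s)\,\ell\ell(s)}\,\ell\ell\ell^{-q'}(s)$, so the integral equals $\tfrac1{q'-1}\,\ell\ell\ell^{\,1-q'}(r)$ — the lower endpoint contributes nothing because $\ell\ell\ell(s)\to\infty$ as $s\to0^+$ while $1-q'<0$. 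Taking $q'$‑th roots and using $\tfrac1{q'}-1=-\tfrac1q$ then yields \eqref{E:H10bis}, with explicit constant $(q'-1)^{-1/q'}$. In the degenerate case $q=1$, i.e.\ $q'=\infty$, the claim reduces to $\|\ell\ell\ell^{-1}\|_{L^\infty(0,r)}=\ell\ell\ell^{-1}(r)$, which is immediate from the monotonicity of $\ell\ell\ell^{-1}$ on $(0,1)$.

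\textbf{The identity \eqref{E:H11bis}.} Fix $\lambda<0$. Here I would follow the three‑step pattern behind \eqref{E:H11}\textsubscript{1}. First, since $\lambda-\tfrac1{q'}<0$ the power factor dominates, so $\Psi_{\lambda;\,q,\frac1{q'},\frac1{q'}}\,\ell\ell\ell^{-1}$ is equivalent to a non‑increasing function on $(0,1)$; exactly as in \eqref{UU}, the tail $(1-r,1)$ is then negligible and one may replace the $L^{q'}(r,1-r)$‑norm by the $L^{q'}(r,1)$‑norm. Second, the substitution $s=1/t$ — under which $\ell$, $\ell\ell$ and $\ell\ell\ell$ are all invariant and the exponent of the power changes sign, precisely the manoeuvre used in \eqref{G2} — turns this into $\|\Psi_{-\lambda;\,q,\frac1{q'},\frac1{q'}}\,\ell\ell\ell^{-1}\|_{L^{q'}(1,1/r)}$. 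Third, since $-\lambda>0$ the relevant integrand $t^{-q'\lambda-1}\ell^{-1}(t)\ell\ell^{-1}(t)\ell\ell\ell^{-q'}(t)$ is a positive power times slowly varying factors, so a Karamata‑type estimate — of the kind encoded in \cite[Proposition~3.4.33,~(v)]{ED} and already exploited in the proof of Lemma~\ref{L:H1} — shows that this norm is comparable to the value of $\Psi_{-\lambda;\,q,\frac1{q'},\frac1{q'}}\,\ell\ell\ell^{-1}$ at the right endpoint $t=1/r$, namely $r^{\lambda}\ell^{-1/q'}(r)\ell\ell^{-1/q'}(r)\ell\ell\ell^{-1}(r)$ (for $q=1$ this last step is simply the supremum of a quasi‑increasing function, again attained at $1/r$). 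Since $\Psi_{\lambda+\frac1{q'};\,q,\frac1{q'},\frac1{q'}}(r)=r^{\lambda}\ell^{-1/q'}(r)\ell\ell^{-1/q'}(r)$, the displayed quantity is precisely $\Psi_{\lambda+\frac1{q'};\,q,\frac1{q'},\frac1{q'}}(r)\,\ell\ell\ell^{-1}(r)$, which is \eqref{E:H11bis}.

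\textbf{Anticipated main difficulty.} None of this is deep — it is essentially bookkeeping with iterated logarithms — and I expect the only points requiring genuine care to be: (a) the quasi‑monotonicity assertions that license the reduction \eqref{UU} and the endpoint comparison in the third step of \eqref{E:H11bis}, where the hypothesis $\lambda<0$ is exactly what is used; and (b) the uniform treatment of the borderline case $q=1$ (hence $q'=\infty$), in which every $L^{q'}$‑norm collapses to a supremum. I expect (b) to be the step most easily overlooked, though the least problematic once the $q>1$ computation is in place; as in Remark~\ref{R:H1}, moreover, the resulting comparabilities may in fact be taken to hold on all of $(0,1)$.
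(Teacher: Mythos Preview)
Your proposal is correct and follows essentially the same route as the paper: the paper dismisses \eqref{E:H10bis} as ``trivial'' (your explicit antiderivative computation is exactly the kind of one-line verification intended), and for \eqref{E:H11bis} it refers to the arguments behind \eqref{G1} and \eqref{G2}, which is precisely your three-step scheme of quasi-monotone reduction as in \eqref{UU}, the inversion $s\mapsto 1/t$, and the endpoint comparison via \cite[Proposition~3.4.33,~(v)]{ED} (respectively the supremum argument when $q=1$). Your anticipated difficulties (a) and (b) are well identified and both are handled correctly.
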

\begin{proof} The equivalence \eqref{E:H10bis} is trivial. Consider property \eqref{E:H11bis}. Fix $\lambda < 0$, and let $r\in \left(0,\tfrac13\right]$. If $q=1$, one has that
$$\|  \Psi_{\lambda; \, 1, 0, 0} \, \ell\ell\ell^{-1}\|_{L^{\infty}(r,1-r)} = \|  s^{\lambda} \, \ell\ell\ell^{-1}(s)\|_{L^{\infty}(r,1-r)} \,   \approx \,  r^{\lambda} \, \ell\ell\ell^{-1}(r) = \Psi_{\lambda; \, 1, 0, 0}(r)   \ell\ell\ell^{-1}(r).$$ When $q>1$, 
\begin{equation*}
\begin{aligned}
    \|  \Psi_{\lambda; \, q, \frac{1}{q'}, \frac{1}{q'}} \ell\ell\ell^{-1}\|_{L^{q'}(r,1-r)}\,  &\approx \,     \|  \Psi_{\lambda; \, q, \frac{1}{q'}, \frac{1}{q'}} \ell\ell\ell^{-1}\|_{L^{q'}(r,1)}\, \approx \,   \left\|\Psi_{-\lambda; \, q, \frac{1}{q'}, \frac{1}{q'}} \ell\ell\ell^{-1}\right\|_{L\sp{q'}\left(1,\frac1r\right)} \\
    &\approx \,  \sup_{t \in \left(1, \frac 1r\right)}  t^{ -\lambda}   \ell^{-\frac{1}{q'}}(t)  \ell\ell^{- \frac{1}{q'}}(t) \ell\ell\ell^{-1}(t) \, \approx \,  \Psi_{\lambda+ \frac{1}{q'}; \, q, \frac{1}{q'}, \frac{1}{q'}}(r)  \ell\ell\ell^{-1}(r).  
\end{aligned}\end{equation*} Here, the first equivalence holds thanks to the facts that
 the function $\Psi_{\lambda; \, q, \frac{1}{q'}, \frac{1}{q'}} \ell\ell\ell^{-1}$ is  equivalent to non\hyp{}increasing function on  $(0,1)$ and   the length of the interval $(1-r,1)$ does not exceed that of $(r,1-r)$. The third equivalence follows from an application of \cite{ED}*{Proposition~3.4.33~(vi)} and  the last one is due to the non\hyp{}decreasing monotonicity (up to equivalence) of the function $t\mapsto t^{ -\lambda}  \ell^{-\frac{1}{q'}}(t)  \ell\ell^{- \frac{1}{q'}}(t) \ell\ell\ell^{-1}(t)$ on  $\left(1,\infty\right)$ and the very definition of the function $\Psi_{\lambda; \, q, \alpha, \beta}$.
\end{proof} 

\section{Proofs of main results}\label{proof}   As a preliminary, let us observe that in all the proofs we shall make use of the properties \eqref{c:equivLZspA} and \eqref{c:equivLZ2} as well as of the characterizations  \eqref{c:ASSnorm}-\eqref{c:ASSnorm2} of the associate spaces of GLZ~spaces many times. This will be done without further explicit reference to them.

\subsection{Proof of optimal embeddings into rearrangement\hyp{}invariant spaces}\label{S:ORI}  In this section, without further warning, we shall repeatedly employ the characterization  \eqref{E:EOPJ2} of the associate space of the optimal  rearrangement\hyp{}invariant target space in Sobolev\hyp{}type embeddings for $W^mX(\Omega)$, when $X(0,1)=L^{p,q; \alpha, \beta}(0,1)$ and  $X(0,1)=L^{(1,q;\alpha, \beta)}(0,1)$, respectively,    the non\hyp{}increasing monotonicity on $(0,1)$ for any $f \in   L^0_+(0,1)$ of the function $S_mf$ defined in \eqref{E:EOPJC}.

\vspace{2mm}
\noindent{\textsc{Proof of Theorem~\ref{T:OptRI}}.} {\sc(A)}  Let $p, q\in[1,\infty]$, $\alpha, \beta \in \R$ fulfill one of the alternatives~\eqref{c:LZnorm}. 
 Assume first that  $p=q =1$, and either $\alpha= 0$ and $\beta \geq 0$ or $\alpha >0$  and $\beta \geq -\alpha$. Then,
\begin{equation*}
\begin{aligned}
\left\|   f \right\|_{{\left(L^{1,1,\alpha, \beta}\right)_{m, \text{opt}}'(0,1)}}    \, 
&\approx \, \left\|S_mf\right\|_{L\sp{\infty, \infty; -\alpha, -\beta}(0,1)}    =  \sup_{t \in (0,1)} t\sp{\frac mn}  f\sp{**}(t) \sup_{s \in (0,t]}     \ell^{-\alpha}(s) \ell\ell^{-\beta}(s) \\  &=\sup_{t \in (0,1)} t\sp{\frac mn}  \ell^{-\alpha}(t) \ell\ell^{-\beta}(t)  f\sp{**}(t) \,  \approx \,   \left\|   f \right\|_{{\left(L^{1^*\!, 1; \alpha, \beta}\right)'(0,1)}} 
\end{aligned} 
\end{equation*} 
for every  $f \in  L^0_+(0,1)$. Indeed, the first equality is due to the interchange of the order of the suprema and the second one to the fact that   $\Psi_{0; \, 1, \alpha, \beta}=  \ell^{-\alpha}  \ell\ell^{-\beta}$ is equivalent to a non\hyp{}decreasing function on $(0,1)$ according to \eqref{new}\textsubscript{4-6}. The conclusions \eqref{E:OptRI1}\textsubscript{1,2} thus hold. 

If  $p=q =1$,   $\alpha >0$  and  $\beta < -\alpha$, then, combining \eqref{new}\textsubscript{7}, \eqref{ASS_q-conc} and \eqref{Endsp_ASS} tells us that
\begin{equation*} 
\begin{aligned}
\left\|   f \right\|_{{\left(L^{1,1,\alpha, \beta}\right)_{m, \text{opt}}'(0,1)}}    \, 
&\approx \, \left\|S_mf\right\|_{L\sp{\infty, \infty; -\alpha, -\beta}(0,1)}    =  \sup_{t \in (0,1)} \left(t\sp{\frac mn}  \sup_{s \in (0,t]}     \ell^{-\alpha}(s) \ell\ell^{-\beta}(s)\right)  f\sp{**}(t) \\  &=\left\|   f \right\|_{\mathsf{M}_{\phi_{\alpha, \beta}}(0,1)}  \,  \approx \,   \left\|   f \right\|_{\Lambda'_{\overbar{\phi}_{\alpha, \beta}}(0,1)} 
\end{aligned} 
\end{equation*} 
for every  $f \in  L^0_+(0,1)$. Here, ${\phi}_{\alpha, \beta} \colon [0, 1] \to [0, \infty)$ is the quasiconcave function defined by  
\begin{equation}\label{E:100}
{\phi}_{\alpha, \beta}(t) \, = \,\begin{cases} t^{\frac mn}   \ell^{-\alpha}(t)\ell\ell^{-\beta}(t)  \quad   &\text{if} \ t \leq e^{1- e^{-\left(\frac{\beta}{\alpha}+1\right)}}\\
 \left( - \frac{\alpha}{\beta} \right)^\beta   e^{\alpha+ \beta}\, t^{\frac mn} \quad   &\text{if} \ t >  e^{1- e^{-\left(\frac{\beta}{\alpha}+1\right)}},\end{cases}\end{equation}  and \begin{equation}\label{E:101}
{\overbar\phi}_{\alpha, \beta}(t) \, = \,\begin{cases} t^{1-\frac mn}   \ell^{\alpha}(t)\ell\ell^{\beta}(t) \quad   &\text{if} \ t \leq e^{1- e^{-\left(\frac{\beta}{\alpha}+1\right)}}\\
 \left( - \frac{\beta}{\alpha} \right)^\beta  e^{-(\alpha+ \beta)}\, t^{1-\frac mn} \quad   &\text{if} \ t >  e^{1- e^{-\left(\frac{\beta}{\alpha}+1\right)}},\end{cases}\end{equation} according to \eqref{Endsp_ASS}. Thus, \eqref{E:OptRI1}\textsubscript{4} is established. 

\noindent When $p \in \left(1, \frac nm\right)$, then,
\begin{align}
\left\|   f \right\|_{{\left(L^{p,q;\alpha, \beta}\right)_{m, \text{opt}}'(0,1)}}    \, 
&\approx \, \left\|S_mf\right\|_{L\sp{p', q'; -\alpha, -\beta}(0,1)}    \, \approx \,  \left\| s\sp{\frac{1}{p'}+\frac mn -  \frac{1}{q'}}       \ell^{-\alpha}(s) \ell\ell^{-\beta}(s) f\sp{**}(s)\right\|_{L\sp{q'}(0,1)}  \label{E:critS}\\  
&  \approx \,   \left\|   f \right\|_{{\left(L^{p^\ast\!,q;\alpha, \beta}\right)'(0,1)}}  \notag  
\end{align} 
for every  $f \in L^0_+(0,1)$.
Concerning the second equivalence in \eqref{E:critS}, let us stress that the inequality $\gtrsim$ trivially follows from the very definition of $S_mf$ and the property \eqref{N3} of r.i.~function norms. The reverse inequality does hold thanks to an application of \cite{GOP}*{Theorem~3.2} and of \cite{ED}*{Proposition~3.4.33~(v)}  when $q\in (1, \infty]$, and the fact that $s \mapsto s^{\frac{1}{p'}}\ell^{-\alpha}(s) \ell\ell^{-\beta}(s)$ is equivalent to a non\hyp{}decreasing function on $(0,1)$ when $q=1$. The conclusion \eqref{E:OptRI1}\textsubscript{3} is thus established.
  
\noindent For  $p= \frac nm $, the chain \eqref{E:critS} is still valid (on replacing $p$  by $\frac nm$) and one has that  
\begin{align*}
\left\|   f \right\|_{{\left(L^{\frac nm,q,\alpha, \beta}\right)_{m, \text{opt}}'(0,1)}}       
&\approx   \left\|f\right\|_{L\sp{(1, q'; -\alpha, -\beta)}(0,1)}     \\ 
&\approx     
\begin{cases} 
\left\|   f \right\|_{{\left(L^{\infty, q; \alpha-1 , \beta}\right)'(0,1)}}     & \      \text{if} \ \,  q\in[1,\infty],  \,   \alpha   < \frac {1}{q'}, \, \beta \in \R 
\\[0.2ex]  
\left\|   f \right\|_{{\big(L\sp{\infty, q; - \frac 1q , \beta-1}\big)'(0,1)}}
& \     \text{if} \ \,  q\in[1,\infty],  \,   \alpha   = \frac {1}{q'}, \,    \beta < \frac {1}{q'} 
\\[0.2ex] 
\left\|   f \right\|_{{\big(L^{\infty, q; - \frac 1q , -\frac 1q , -1}\big)'(0,1)}}
& \   \text{if} \ \,   q\in (1,\infty], \,   \alpha   =   \beta   =\frac {1}{q'} 
\\[0.2ex]
\left\|   f \right\|_{{(L^{\infty})'(0,1)}}
& \  \text{if} \,   
\begin{cases}
q=1, \,  \alpha   =   \beta   = 0  \\[0.2ex]   
q\in[1,\infty], \,   \alpha   = \frac {1}{q'},  \,  \beta > \frac {1}{q'}   \\[0.2ex]  
q\in[1,\infty],  \,   \alpha   > \frac {1}{q'}, \, \beta \in \R
\end{cases}   
\end{cases}
\end{align*} 
for every  $f \in L^0_+(0,1)$, proving the statements \eqref{E:OptRI1}\textsubscript{5-10}.

\noindent When $p > \frac{n}{m}$, it suffices to observe that any GLZ~space $L^{p,q;\alpha, \beta}(0,1)$ embeds in the Lorentz space $L^{\frac{n}{m},1}(0,1)$ and make use of the just proved assertion \eqref{E:OptRI1}\textsubscript{8}. This concludes the proof of \eqref{E:OptRI1}.

\vspace{1mm}
\noindent {\sc(B)} Let $q\in[1,\infty]$, $\alpha, \beta \in \R$ satisfy one of the conditions \eqref{c:L1}. Assume first that  $q =1$.\\
In the light of \eqref{c:equivLZ2a}, the embeddings \eqref{E:OptRI2}\textsubscript{1-3} follows from \eqref{E:OptRI1}\textsubscript{2}, \eqref{E:OptRI1}\textsubscript{4} and \eqref{E:OptRI1}\textsubscript{1}, respectively. 
When $\alpha =  \beta = -1$, 
\begin{equation*}
\begin{aligned}
\left\|   f \right\|_{{\left(L^{(1,1;-1, -1)}\right)_{m, \text{opt}}'(0,1)}}   \, 
&\approx \, \left\|S_mf\right\|_{L\sp{\infty, \infty; 0, 0,-1}(0,1)}    =  \sup_{t \in (0,1)} t\sp{\frac mn}  f\sp{**}(t) \sup_{s \in (0,t]}     \ell \ell\ell^{-1}(s) \\  &=\sup_{t \in (0,1)} t\sp{\frac mn}    \ell \ell\ell^{-1}(t)  f\sp{**}(t) \, = \,   \left\|   f \right\|_{{L^{\frac nm, \infty; 0, 0, -1}(0,1)}}\,  \approx \,   \left\|   f \right\|_{{\left(L^{1^*\!, 1; 0, 0, 1}\right)'(0,1)}} 
\end{aligned} 
\end{equation*} 
The first equality is obtained by interchanging the order of the suprema and the last equivalence follows from \cite{PLK}*{Theorem~3.30~(i)}.  Hence the conclusion  \eqref{E:OptRI2}\textsubscript{4} holds as well. Furthermore, all target spaces in \eqref{E:OptRI2}\textsubscript{1-4}  are   optimal among all rearrangement\hyp{}invariant spaces.

Suppose next that $q\in (1,\infty]$. Let us first focus on the case $\alpha > -\frac{1}{q}$ and $\beta \in \R$. 

\noindent We claim that
\begin{align}\label{E:optemb1}
L^{1^*\!, q;  \alpha + 1,  \beta} (0,1)\,  \hookrightarrow \, (L^{(1,q; \alpha, \beta)})_{m, \text{opt}}(0,1) \,  \hookrightarrow \,   L^{1^*\!, q;  \alpha + \frac 1q,  \beta} (0,1),
\end{align}
\begin{align}
  &(L^{(1,q; \alpha, \beta)})_{m, \text{opt}}(0,1) \,  \not \hookrightarrow \,   L^{1^*\!, q;  \alpha + \epsilon,  \gamma} (0,1) && \text{for every}  \    \epsilon\in  \left(\tfrac{1}{q},1\right], \gamma \in \mathbb{R}, \label{E:optemb1bis}\\[-1ex]
   \intertext{and}
 &(L^{(1,q; \alpha, \beta)})_{m, \text{opt}}(0,1) \,  \not \hookrightarrow \,   L^{1^*\!, q;  \alpha + \frac 1q,  \gamma} (0,1)  &&\text{for every}  \  \gamma > \beta. \label{E:optemb1bis:2}
\end{align}
To verify \eqref{E:optemb1}, notice that the very definition of the function $S_mf$, and the property \eqref{N3} of r.i.~function norms imply that
\begin{equation}\label{P:njpAAx}\begin{aligned}
\left\|   f \right\|_{{\left(L^{(1,q; \alpha, \beta)}\right)_{m, \text{opt}}'(0,1)}}   \, 
&\approx \, \left\|S_m f \right\|_{{L^{\infty,q';-\alpha-1, -\beta}(0,1)}} 
\, \gtrsim \,     \left\lVert     f  \right\rVert_{L^{\frac nm, q'; -\alpha -1, - \beta}(0,1)} \\ 
&\approx  \left\lVert     f  \right\rVert_{(L^{1^*\!, q; \alpha +1,  \beta})'(0,1)} 
\end{aligned}\end{equation}
for every  $f \in  L^0_+(0,1)$. Hence, the first embedding in \eqref{E:optemb1} follows making use of \eqref{E:embequ2b}.
On the other hand,
\begin{equation}\label{P:njpAA}
 \left\|S_m f \right\|_{{L^{\infty,q';-\alpha-1, -\beta}(0,1)}}  \, 
 \lesssim \,     \left\lVert     f  \right\rVert_{L^{\frac nm, q'; -\alpha -\frac 1q, - \beta}(0,1)}   \approx  \left\lVert     f  \right\rVert_{\left(L^{1^*\!, q; \alpha +\frac 1q,  \beta}\right)'(0,1)} 
\end{equation}
for every  $f \in L^0_+(0,1)$. The inequality in \eqref{P:njpAA} holds by \cite{GOP}*{Theorem~3.2~(i)}, applied with  
\begin{equation}\label{E:GOPa1}\begin{aligned}&\varphi= f^{**}, \qquad p=q \ \ \text{but with $q$ replaced by $q'$},   \qquad u(t)=t^{\frac mn}\chi_{(0,1)}(t), \\ &\omega(t)= t^{-1} \ell^{- (\alpha+1)q'}(t)\ell\ell^{- \beta q'}(t)\chi_{(0,1)}(t),\qquad   v(t)= t^{{\frac mn q'}-1} \ell^{1- \left(\alpha+ 1\right)q'}(t)\ell\ell^{- \beta q'}(t)\chi_{(0,1)}(t)  \end{aligned}\end{equation}
for $t \in (0,\infty)$. Notice also that calculation shows that 
\begin{equation}\label{E:GOPa2}  \left(\int_0^{x} \left(\sup_{\tau \in [t,x]} u(\tau) \right)^{q'}  \omega(t) \, \d  t  \right)^{\frac 1{q'}} \, \approx \,  \left(\int_0^{x}    v(t) \, \d  t  \right)^{\frac 1{q'}} \quad \ \text{for every}  \    x\in  (0,\infty) .\end{equation} 
Let us stress that  the inequality $\lesssim$ holds for every $x\in  (0,\infty)$ in \eqref{E:GOPa2} and this fact characterizes the validity of the inequality in \eqref{P:njpAA}. 
Due to  \eqref{E:embequ2b}, coupling \eqref{P:njpAAx} and \eqref{P:njpAA}  proves \eqref{E:optemb1}. The claims \eqref{E:optemb1bis} and \eqref{E:optemb1bis:2} are a direct consequence of \cite{GOP}*{Theorem~3.2~(i)}, in view of \eqref{E:GOPa2}. 

\noindent In addition, combining the first embedding in \eqref{E:optemb1}, the properties \eqref{E:embequ2}, \eqref{E:embequ2b}  and \eqref{E:optemb1bis} with $\epsilon=1, \gamma = \beta$ yields that 
\begin{equation}\label{XE:optemb1BB}L^{1^*\!, q;  \alpha + 1,  \beta} (0,1)\,  \subsetneqq \, (L^{(1,q; \alpha, \beta)})_{m, \text{opt}}(0,1).\end{equation} 
Our aim is now prove that
\begin{equation}\label{XE:optemb1H}
  (L^{(1,q; \alpha, \beta)})_{m, \text{opt}}(0,1) \,  \subsetneqq \, L^{1^*\!, q;  \alpha + \frac 1q,  \beta} (0,1)
 .\end{equation} 
 To this end, let us observe that if $(L^{(1,q; \alpha, \beta)})_{m, \text{opt}}(0,1) =   L^{1^*\!, q;  \alpha + \frac 1q,  \beta} (0,1) $, then
the fractional operator $T_{\frac{m}{n}}$  defined on functions $f \in L^{0}_{+}(0,1)$ as
\begin{equation}\label{D:frac}
T_{\frac{m}{n}}f(s) = s^{- \frac{m}{n}} \sup\limits_{t \in [s, 1]} t^{\frac{m}{n}}f^{\ast}(t) \quad \  \text{for} \ s \in [0,1]
\end{equation}
 had to be bounded on $(L^{1^*\!, q; \alpha+\frac 1q, \beta})'(0,1)  =  L^{\frac{n}{m}, q'; -\alpha-\frac 1q, -\beta} (0,1)$ by \cite{ZM2}*{Theorem~4.6}. Then, by \cite{GOP}*{Theorem~3.2~(i)}, such a boundedness had to be in turn equivalent to the  
  existence of some positive constant $C$  such that  
\begin{equation*}\label{E:GOPaX} x^{\frac{m}{n}} \left(\int_0^{x}   w(t) \ell(t) \, \d  t  \right)^{\frac 1{q'}} \, \leq  \, C  \,\left(\int_0^{x}    v(t) \, \d  t  \right)^{\frac 1{q'}} \quad \ \text{for every}  \    x\in  (0,\infty) ,\end{equation*} with $w, v$ be the weights described in \eqref{E:GOPa1}.
This would be clearly a contradiction. In the light of \eqref{E:optemb1}-\eqref{E:optemb1bis:2},  \eqref{XE:optemb1BB}, \eqref{XE:optemb1H} and \cite{OP}*{Theorems~4.5 and~4.6}, $L^{1^*\!, q;  \alpha + \frac 1q,  \beta} (\Omega)$ is the optimal GLZ~space of second exponent $q$ in embedding \eqref{E:OptRI2}\textsubscript{5} for any John domain $\Omega$ of $\R^n$.

\noindent We finally claim that $(L^{(1,q; \alpha, \beta)})_{m, \text{opt}}(0,1)$ is not a GLZ~space, and that  $L^{1^*\!, q;  \alpha + \frac 1q,  \beta} (\Omega)$ is actually the optimal target space in embedding \eqref{E:OptRI2}\textsubscript{5} among all GLZ~spaces for any John domain $\Omega$ of $\R^n$.  To verify these claims, consider any space $L^{1^\ast, \tilde{q}; \tilde{\alpha}, \tilde{\beta}}(0,1)$  with $\tilde{q}, \tilde{\alpha}, \tilde{\beta} \in \mathbb{R}$, $\tilde{q} \neq q$. Notice that the second embedding in \eqref{E:optemb1} tells us that a necessary condition for the validity of the equality $L^{1^\ast, \tilde{q}; \tilde{\alpha}, \tilde{\beta}}(0,1)= (L^{(1,q; \alpha, \beta)})_{m, \text{opt}}(0,1)$  is that 
\begin{align}\label{E:emb1:1}
 L^{1^\ast, \tilde{q}; \tilde{\alpha}, \tilde{\beta}}(0,1) \,  \hookrightarrow \,   L^{1^*\!, q;  \alpha + \frac 1q,  \beta} (0,1),
\end{align}
which in turn is equivalent to the validity of one of the following alternatives 
\begin{equation}\label{condemb}
\begin{cases}   \tilde{q} > q, \    \tilde{\alpha} + \frac 1{\tilde{q}} >   \alpha + \frac 2q, \    \tilde{\beta}\in \mathbb{R}   \\[0.2ex]
\tilde{q} > q, \   \tilde{\alpha} + \frac 1{\tilde{q}}   =  \alpha + \frac 2q, \   \tilde{\beta} + \frac 1{\tilde{q}}  >   \beta + \frac 1q\\[0.2ex]
\tilde{q} < q, \     \tilde{\alpha} > \alpha + \frac 1q, \ \tilde{\beta}\in \R \\[0.2ex]
\tilde{q} < q, \     \tilde{\alpha} = \alpha + \frac 1q, \      \tilde{\beta} > \beta  \\[0.2ex]
\tilde{q} < q, \     \tilde{\alpha} = \alpha + \frac 1q, \      \tilde{\beta} = \beta, 
\end{cases}
\end{equation} according to \cite{OP}*{Theorem~4.5}.
So, we have to show that 
\begin{equation}\label{oneemb}
 (L^{(1,q; \alpha, \beta)})_{m, \text{opt}}(0,1) \,  \not \hookrightarrow \,   L^{1^*\!, \tilde{q};  \tilde{\alpha}, \tilde{\beta}} (0,1)
\end{equation}
in all cases \eqref{condemb}. 

\noindent When \eqref{condemb}\textsubscript{1-4} are in force, combining \eqref{E:optemb1bis}, \eqref{E:optemb1bis:2} and \cite{OP}*{Theorem~4.5} yields the existence of some space $L^{1^\ast, q; \epsilon, \gamma}(0,1)$ such that
\begin{align*}
  (L^{(1,q; \alpha, \beta)})_{m, \text{opt}}(0,1) \,  \not \hookrightarrow \, L^{1^\ast, q; \epsilon, \gamma}(0,1) \qquad  \text{and} \qquad       L^{1^*\!, \tilde{q};  \tilde{\alpha}, \tilde{\beta}} (0,1) \, \hookrightarrow \, L^{1^\ast, q; \epsilon, \gamma}(0,1), 
\end{align*}
proving \eqref{oneemb}.

\noindent In the remaining case \eqref{condemb}\textsubscript{5}, according to \eqref{P:njpAA}, \cite{GOP}*{Theorem~3.2 (ii) and Remark~3.3} tells us the fact that if
\begin{align}\label{E:QQ}
  (L^{(1,q; \alpha, \beta)})_{m, \text{opt}}(0,1) \,  \hookrightarrow \,   L^{1^*\!, \tilde{q}; \alpha + \frac 1q, \beta} (0,1) \quad \ \text{for} \ \tilde{q} <q,
\end{align}
then necessarily 
\begin{equation}\label{E:ZZZ}  \left(\int_0^{x} \left(\sup_{\tau \in [t,x]} u(\tau) \right)^{q'}  \omega(t) \, \d  t  \right)^{\frac r{q'}} \, \lesssim \,  \left(\int_0^{x}    v(t) \, \d  t  \right)^{\frac r{\tilde{q}'}} \quad \ \text{for every}  \    x\in  (0,\infty) \end{equation} 
with $\frac 1r = \frac1{q'} - \frac 1{\tilde{q}'}$, the weights $u, \omega$ given by \eqref{E:GOPa1} and 
\begin{align}\label{E:XXX}
    v(t)= t^{{\frac mn \tilde{q}'}-1} \ell^{1- \left(\alpha+ 1\right)\tilde{q}'}(t)\ell\ell^{- \beta \tilde{q}'}(t)\chi_{(0,1)}(t).
\end{align}
Nevertheless, this is not true. Hence, the claims are proved, according to \eqref{norm}.  The proof of the case $q\in (1,\infty]$, $\alpha > -\frac{1}{q}$ and $\beta \in \R$ is thus complete.
 
 The remaining cases, namely if either $\alpha = - \frac{1}{q}$ and  $\beta > - \frac{1}{q}$ or $q \in (1,\infty)$ and $\alpha = \beta = - \frac{1}{q}$, follows from similar arguments. We thus limit ourselves to  outlining the necessary modifications. 
 
 When $\alpha = - \frac{1}{q}$ and  $\beta > - \frac{1}{q}$, one has that 
 \begin{align}\label{E:optemb2Z}
( L^{1^*\!, q; 0, \beta +   \frac 1q})' (0,1) \,  \hookrightarrow \, (L^{(1,q; -\frac 1q, \beta)})'_{m, \text{opt}}(0,1) \,  \hookrightarrow \, (L^{1^*\!, q;   \frac{1}{q'}, \beta + 1})' (0,1).    
\end{align} Indeed,
\begin{equation*}\label{E:njp30} 
\begin{aligned}
\left\|   f \right\|_{\left(L^{(1,q; - \frac{1}{q}, \beta)}\right)_{m, \text{opt}}'(0,1)}   \, 
&\approx \, \left\|S_m f \right\|_{{L^{\infty,q';- \frac{1}{q'}, -\beta-1}(0,1)}} \, \gtrsim  \,   \left\lVert     f  \right\rVert_{L^{\frac nm, q'; - \frac{1}{q'}, - \beta-1}(0,1)}\\
& \approx \,   \left\lVert     f  \right\rVert_{\left(L^{1^*\!, q; \frac{1}{q'},  \beta+1}\right)'(0,1)} 
\end{aligned}
\end{equation*}   and 
\begin{equation}\label{P:njp30}
 \left\|S_m f \right\|_{{L^{\infty,q';-\frac{1}{q'}, -\beta-1}(0,1)}}  \, 
 \lesssim \,     \left\lVert     f  \right\rVert_{L^{\frac nm, q'; 0, - \beta - \frac 1q}(0,1)}   \approx  \left\lVert     f  \right\rVert_{\left(L^{1^*\!, q; 0,  \beta + \frac 1q}\right)'(0,1)} 
\end{equation}
for every $f \in  L^0_+(0,1)$. The inequality in \eqref{P:njp30} holds  by \cite{GOP}*{Theorem~3.2~(i)}, applied with  the weights $\omega$ and $v$ in \eqref{E:GOPa1} replaced by
\begin{equation*}\label{E:GOPa30}\omega(t)= t^{-1} \ell^{-1}(t) \ell\ell^{- (\beta+1) q'}(t)\chi_{(0,1)}(t),\qquad   v(t)=t^{{\frac mn q'}-1}\ell\ell^{1-(\beta+1) q'}(t) \chi_{(0,1)}(t). \end{equation*}
 This choice guarantees that \eqref{E:GOPa2} holds. Moreover, on replacing $v$ by any weight of the form
\begin{align*}   &v_{\epsilon, \gamma}(t)=t^{{\frac mn q'}-1}\ell^{- \epsilon q'}(t)\ell\ell^{-\gamma q'}(t) \chi_{(0,1)}(t)  &&  \text{for}  \    \epsilon\in  \left(0,\tfrac{1}{q'}\right],\gamma\in  \mathbb{R} \\[-1ex]
\intertext{or}
& v_{\gamma}(t)=t^{{\frac mn q'}-1}\ell\ell^{1-(\gamma+1) q'}(t) \chi_{(0,1)}(t)  &&  \text{for}  \      \gamma > \beta,\end{align*}
the inequality $\lesssim$ in \eqref{E:GOPa2} fails. Hence,  \cite{GOP}*{Theorem~3.2~(i)} tells us that  
\begin{align}
  (L^{(1,q; - \frac{1}{q}, \beta)})_{m, \text{opt}}(0,1) \,  &\not \hookrightarrow \,  L^{1^{\ast}, q; \epsilon,  \gamma} (0,1)  && \text{for every}  \    \epsilon\in  \left(0,\tfrac{1}{q'}\right],   \gamma\in  \mathbb{R}, \label{E:optemb1ter}\\[-1ex]
 \intertext{and}  
 (L^{(1,q; - \frac{1}{q}, \beta)})_{m, \text{opt}}(0,1) \,  &\not \hookrightarrow \,  L^{1^{\ast}, q; 0,  \gamma+\frac 1q} (0,1)  &&\text{for every}  \      \gamma > \beta. \label{E:optemb1ter:2}
\end{align}
By virtue of \eqref{E:embequ2b} and \eqref{E:optemb2Z} one thus obtains that \begin{align}\label{E:optemb5}
L^{1^*\!, q;  \frac{1}{q'},  \beta+1} (0,1)\,  \hookrightarrow \, (L^{(1,q; - \frac{1}{q}, \beta)})_{m, \text{opt}}(0,1) \,  \hookrightarrow \,   L^{1^*\!, q;    0,  \beta + \frac 1q} (0,1) . 
\end{align}
In addition, with the first embedding in \eqref{E:optemb5}, the properties \eqref{E:embequ2} and \eqref{E:optemb1ter} with $\epsilon=\frac{1}{q'}$ and $\gamma=\beta +1$ at hand, one has that   
\begin{equation}\label{XE:optemb1DD} L^{1^*\!, q;  \frac{1}{q'},  \beta+1} (0,1)\,  \subsetneqq \, (L^{(1,q; -\frac 1q, \beta)})_{m, \text{opt}}(0,1) .\end{equation} 
Our task is now prove that
\begin{equation}\label{XE:optemb1DDa}
  (L^{(1,q; -\frac 1q, \beta)})_{m, \text{opt}}(0,1) \,  \subsetneqq \, L^{1^*\!, q;   0,  \beta+ \frac 1q} (0,1)
 .\end{equation} 
 Assume, by contradiction, that $(L^{(1,q; -\tfrac{1}{q}, \beta)})_{m, \text{opt}}(0,1) =   L^{1^*\!, q;  0,  \beta +\frac 1q} (0,1) $, then the fractional operator $T_{\frac{m}{n}}$  described in \eqref{D:frac} is  bounded on $(L^{1^*\!, q;  0, \beta +  \frac 1q})'(0,1)  =  L^{\frac{n}{m}, q';  0, -\beta-\frac 1q} (0,1)$ by \cite{ZM2}*{Theorem~4.6}. By \cite{GOP}*{Theorem~3.2~(i)}, this is equivalent to the existence of some positive constant $C$ such that  
\begin{equation*}\label{E:GOPaX2} x^{\frac{m}{n}q'}  \int_0^{x}   t^{-1} \ell\ell^{1-(\beta+1)q'}(t)\, \d  t   \, \leq  \, C  \, \int_0^{x}    t^{\frac{m}{n}q'-1} \ell\ell^{1-(\beta+1)q'}(t)\, \d  t \,   \quad \ \text{for every}  \    x\in  (0,1) , \end{equation*} 
which is impossible. Hence  \eqref{XE:optemb1DDa} is established. 

\noindent Putting together \eqref{E:optemb1ter}-\eqref{XE:optemb1DDa} and \cite{OP}*{Theorems~4.5 and~4.6} one gets that $L^{1^*\!, q;  0 ,  \beta + \frac 1q} (\Omega)$ is the optimal GLZ~space of second exponent $q$ in embedding \eqref{E:OptRI2}\textsubscript{5} for any John domain $\Omega$ of $\R^n$. Moreover, a slight modification of the argument in the proof of the previous case shows that the (existing) optimal rearrangement\hyp{}invariant target space $(L^{(1,q;  -\frac{1}{q}, \beta)})_{m, \text{opt}}(\Omega)$ is not a GLZ~space when $q\in (1,\infty]$, and $\beta > -\frac{1}{q}$, and that  $L^{1^*\!, q;  0 ,  \beta + \frac 1q} (\Omega)$ is actually the optimal target space in embedding \eqref{E:OptRI2}\textsubscript{5} among all GLZ~spaces. This concludes the proof of this case.

When $q \in (1,\infty)$, $\alpha = \beta=- \frac{1}{q}$, one infers that 
\begin{align}\label{E:optemb3}
L^{1^*\!, q;  \frac{1}{q'}, \frac{1}{q'}, 1} (0,1)\,  \hookrightarrow \, (L^{(1,q; -\frac 1q, -\frac 1q)})_{m, \text{opt}}(0,1) \,  \hookrightarrow \,   L^{1^*\!, q; 0, 0,   \frac 1q} (0,1).  
\end{align}  Indeed,
\begin{equation*}\label{E:njp31}    
\begin{aligned}
\left\|   f \right\|_{\left(L^{(1,q; - \frac{1}{q}, - \frac{1}{q})}\right)_{m, \text{opt}}'(0,1)}   \, 
&\approx \, \left\|S_m f \right\|_{{L^{\infty,q';- \frac{1}{q'}, - \frac{1}{q'},-1}(0,1)}} \, \, \gtrsim \,   \,   \left\lVert     f  \right\rVert_{L^{\frac nm, q'; - \frac{1}{q'}, - \frac{1}{q'},  -1}(0,1)} 
\\
&\approx  \left\lVert     f  \right\rVert_{\left(L^{1^*\!, q; \frac{1}{q'},  \frac{1}{q'}, 1}\right)'(0,1)} 
\end{aligned}
\end{equation*}
and
\begin{equation}\label{P:njp31}
 \left\|S_m f \right\|_{{L^{\infty,q';-\frac{1}{q'}, -\frac{1}{q'}, -1}(0,1)}}  \, 
 \lesssim \,     \left\lVert     f  \right\rVert_{L^{\frac nm, q'; 0, 0, - \frac 1q}(0,1)}   \approx  \left\lVert     f  \right\rVert_{\left(L^{1^*\!, q; 0,  0, \frac 1q}\right)'(0,1)} 
\end{equation} for every  $f \in  L^0_+(0,1)$.
The inequality   \eqref{P:njp31} holds by \cite{GOP}*{Theorem~3.2~(i)}, applied with the weights $\omega$ and $v$ in \eqref{E:GOPa1} replaced by
 \begin{equation*}\label{E:GOPa31}\omega(t)= t^{-1} \ell^{-1}(t) \ell\ell^{-1}(t) \ell\ell\ell^{- q'}(t)\chi_{(0,1)}(t),\quad   v(t)=t^{{\frac mn q'}-1}\ell\ell\ell^{1- q'}(t) \chi_{(0,1)}(t).  \end{equation*}
With these choices, we have  \eqref{E:GOPa2} again.
Furthermore, on replacing $v$ by any weight of the form
\begin{align*}    &v_{\epsilon, \gamma , \theta}(t)=t^{{\frac mn q'}-1}\ell^{- \epsilon q'}(t)\ell\ell^{-\gamma q'}(t) \ell\ell\ell^{-\theta q'}(t)\chi_{(0,1)}(t)  &&  \text{for}  \     \epsilon\in  \left(0,\tfrac{1}{q'}\right],  \gamma, \theta\in  \mathbb{R}, \\[-1ex]
\intertext{or}
&  v_{\gamma , \theta}(t)=t^{{\frac mn q'}-1}\ell\ell^{-\gamma q'}(t) \ell\ell\ell^{-\theta q'}(t)\chi_{(0,1)}(t)   &&  \text{for}  \    \gamma\in  \left(0,\tfrac{1}{q'}\right],   \theta\in  \mathbb{R},\\[-1ex]
\intertext{or} 
& v_{\theta}(t)=t^{{\frac mn q'}-1} \ell\ell\ell^{1-(\theta+1) q'}(t)\chi_{(0,1)}(t)   &&  \text{for} \    \theta > 0,  \end{align*}
the inequality $\lesssim$   in \eqref{E:GOPa2} fails. 
So, an application of \cite{GOP}*{Theorem~3.2~(i)} yields that  
\begin{align} 
  (L^{(1,q; -\frac 1q, -\frac 1q)})_{m, \text{opt}}(0,1) \,  &\not \hookrightarrow \,   L^{1^\ast, q;  \epsilon, \gamma, \theta} (0,1)  && \text{for every}  \    \epsilon\in  \left(0,\tfrac{1}{q'}\right],  \gamma, \theta\in  \mathbb{R}, \label{E:optemb1q}\\
  (L^{(1,q; -\frac 1q, -\frac 1q)})_{m, \text{opt}}(0,1) \,  &\not \hookrightarrow \,   L^{1^\ast, q;  0, \gamma, \theta} (0,1)  &&  \text{for every}  \    \gamma\in  \left(0,\tfrac{1}{q'}\right],   \theta\in  \mathbb{R}\label{E:optemb1q:2}\\[-1ex] 
 \intertext{and} 
 (L^{(1,q; -\frac 1q, -\frac 1q)})_{m, \text{opt}}(0,1) \,  &\not \hookrightarrow \,   L^{1^\ast, q;  0, 0, \theta+\frac 1q} (0,1) &&\text{for every}  \     \theta > 0. \label{E:optemb1q:3}
\end{align}
Combining \eqref{E:optemb1q} in the case $\epsilon=\frac{1}{q'}$, $\gamma=\frac{1}{q'}$ and $\theta = 1$ with the first embedding in \eqref{E:optemb3}, one gets -via \eqref{E:embequ2}- that
\begin{equation}\label{XE:optemb1DD:1} L^{1^*\!, q;  \frac{1}{q'}, \frac{1}{q'}, 1} (0,1)\,  \subsetneqq \, (L^{(1,q; -\frac 1q, -\frac 1q)})_{m, \text{opt}}(0,1) .\end{equation} 
As in proofs of the previous cases, our aim is now to show that
\begin{equation}\label{XE:optemb1DDa:1}
  (L^{(1,q; -\frac 1q, -\frac 1q)})_{m, \text{opt}}(0,1) \,  \subsetneqq \, L^{1^*\!, q;   0,  0, \frac 1q} (0,1)
.\end{equation} 
Indeed, combining this piece of information with \eqref{E:optemb3}, \eqref{E:optemb1q}-\eqref{XE:optemb1DD:1} and \cite{OP}*{Theorems~4.5 and~4.6}  leads to the  conclusions that $L^{1^*\!, q;  0,  0, \frac 1q} (\Omega)$ is the optimal  five\hyp{}parameter GLZ~space of the second exponent $q$ in embedding \eqref{E:OptRI2}\textsubscript{5} for any John domain $\Omega$ of $\R^n$.

\noindent If $(L^{(1,q; -\frac{1}{q}, -\frac 1q)})_{m, \text{opt}}(0,1) =   L^{1^*\!, q;  0, 0, \frac 1q} (0,1)$, then
the fractional operator $T_{\frac{m}{n}}$  described in \eqref{D:frac} would be bounded on   $(L^{1^*\!, q;  0, 0,   \frac 1q})'(0,1)  =  L^{\frac{n}{m}, q';  0,0,-\frac 1q} (0,1)$ by \cite{ZM2}*{Theorem~4.6}, or equivalently -via \cite{GOP}*{Theorem~3.2~(i)}-  a positive constant $C$ would exist  such that  
\begin{equation*}\label{E:GOPaX2:1} x^{\frac{m}{n}q'}  \int_0^{x}   t^{-1}\ell\ell\ell^{1-q'}(t) \, \d  t   \, \leq  \, C  \, \int_0^{x}    t^{\frac{m}{n}q'-1}\ell\ell\ell^{1-q'}(t) \, \d  t \,   \quad \ \text{for every}  \    x\in  (0,1) , \end{equation*} which is impossible. Hence \eqref{XE:optemb1DDa:1} is established.

\noindent  It thus remain to prove that $(L^{(1,q;  -\frac{1}{q},  -\frac{1}{q})})_{m, \text{opt}}(0,1)$ is not a five\hyp{}parameter GLZ~space when $q\in (1,\infty)$, and that  $L^{1^*\!, q;   0,  0, \frac 1q} (\Omega)$ is actually the optimal target space in embedding \eqref{E:OptRI2}\textsubscript{6} among all five\hyp{}parameter GLZ~spaces. The outline of the proof is similar to that of the previous cases. The novel aspect is due to the presence of the fifth parameter. For the reader's convenience, we focus on the needed modifications.

\noindent Coupling the second embedding in \eqref{E:optemb3} and  \cite{PLK}*{Theorem~3.17},  a necessary condition for the validity of the equality $(L^{(1,q;  -\frac{1}{q},  -\frac{1}{q})})_{m, \text{opt}}(\Omega)= L^{1^\ast, \tilde{q}; \tilde{\alpha}, \tilde{\beta}, \tilde{\gamma}}(0,1)$  for some $\tilde{q}, \tilde{\alpha}, \tilde{\beta}, \tilde{\gamma} \in \mathbb{R}$, $\tilde{q} \neq q$
is that these parameter fulfill one of the following alternatives 
\begin{equation}\label{condemb:3}
\begin{cases}   \tilde{q} > q, \   \tilde{\alpha}  + \frac 1{\tilde{q}}  > \frac 1q, \    \tilde{\beta}, \tilde{\gamma} \in \mathbb{R}   \\[0.2ex]  
\tilde{q} > q, \    \tilde{\alpha}  + \frac 1{\tilde{q}} = \frac 1q, \   \tilde{\beta} + \frac 1{\tilde{q}}> \frac 1q, \ \tilde{\gamma} \in \mathbb{R} \\[0.2ex]
\tilde{q} > q, \  \tilde{\alpha}  + \frac 1{\tilde{q}} = \frac 1q, \ \tilde{\beta} + \frac 1{\tilde{q}} = \frac 1q, \   \tilde{\gamma} + \frac 1{\tilde{q}}  >   \frac 2q  \\[0.2ex] 
\tilde{q} < q, \     \tilde{\alpha} > 0, \ \tilde{\beta}, \tilde{\gamma}\in \R  \\[0.2ex]
\tilde{q} < q, \     \tilde{\alpha} = 0, \      \tilde{\beta} >0, \ \tilde{\gamma} \in \mathbb{R} \\[0.2ex]
\tilde{q} < q, \     \tilde{\alpha} = 0, \      \tilde{\beta} =0, \ \tilde{\gamma} > \frac 1q  \\[0.2ex]
\tilde{q} < q, \     \tilde{\alpha} = 0, \      \tilde{\beta} =0, \ \tilde{\gamma} = \frac 1q.
\end{cases}
\end{equation} 
When \eqref{condemb:3}\textsubscript{1-6} are fulfilled, making use of \eqref{E:optemb1q}-\eqref{E:optemb1q:3} and \cite{PLK}*{Theorem~3.17} tells us that
\begin{align*}
  (L^{(1,q; \alpha, \beta)})_{m, \text{opt}}(0,1) \,  \not \hookrightarrow \, L^{1^\ast, q; \epsilon, \gamma, \theta}(0,1) \qquad \text{and} \qquad  L^{1^*\!, \tilde{q};  \tilde{\alpha}, \tilde{\beta}, \tilde{\gamma}} (0,1) \hookrightarrow L^{1^\ast, q; \epsilon, \gamma, \theta}(0,1) 
\end{align*} for some $L^{1^\ast, q; \epsilon, \gamma, \theta}(0,1)$ space.
Hence,
\begin{equation}\label{oneemb:3}
 (L^{(1,q; \alpha, \beta)})_{m, \text{opt}}(0,1) \,  \not \hookrightarrow \,   L^{1^*\!, \tilde{q};  \tilde{\alpha}, \tilde{\beta}, \tilde{\gamma}} (0,1)
\end{equation}
in all cases   \eqref{condemb:3}\textsubscript{1-6}. 
Conclusion \eqref{oneemb:3} still holds when \eqref{condemb:3}\textsubscript{7} is in force. Indeed, it suffices to observe that if \begin{align*}
  (L^{(1,q; \alpha, \beta)})_{m, \text{opt}}(0,1) \,  \hookrightarrow \,   L^{1^*\!, \tilde{q};  0, 0, \frac 1q} (0,1) \quad \  \text{for} \ \tilde{q} <q, 
\end{align*} then the estimate \eqref{E:ZZZ} should be verified with  \begin{equation*}\omega(t)= t^{-1} \ell^{-1}(t) \ell\ell^{-1}(t) \ell\ell\ell^{- q'}(t)\chi_{(0,1)}(t),\quad   v(t)=t^{{\frac mn \tilde{q}'}-1}\ell\ell\ell^{1- \tilde{q}'}(t) \chi_{(0,1)}(t),  \end{equation*} a contradiction. Hence the proof of the case $q\in (1,\infty)$, $\alpha = \beta=  -\frac{1}{q}$ is complete, and Part~{\sc(B)} is fully proved.
 \qed

\subsection{Proofs of embeddings into H\"older type spaces}\label{HOL} 
 
\vspace{1mm}
For the readers’ convenience, we recall here the content of \cite{Monia1}*{Theorem~3.4} that will be exploited in accomplishing the proofs of Theorems~\ref{T:HOp} and~\ref{T:HO*} with $X(0,1)=L^{p, q; \alpha, \beta}(0,1)$  and $X(0,1)=L^{(1, q; \alpha, \beta)}(0,1)$], respectively. As pointed out at the end of Section~\ref{Sec:Jones}, although \cite{Monia1}*{Theorem~3.4} assumes that  $\Omega$ is a bounded Lipschitz domain in $\R^n$, the same conclusion holds even when $\Omega$ fits in the Jones class.   

Given any $m \in \N$, with $m \leq n$, and any r.i.~function norm $\|\cdot\|_{X(0,1)}$, let $\vartheta_{m,X}\colon (0, \infty) \to (0, \infty]$, with $1\leq m < n$, be the function defined as 
\begin{equation}\label{theta}
\vartheta_{m,X} (r) =  \left\|s^{-1+\frac{m}{n}}\chi_{(0, r^n)}(s)\right\|_{X'(0,1)} \quad \ \text{for}  \   r\in \left(0,\tfrac14\right] ,
\end{equation}
and  $ \vartheta_{m,X} (r) =  \vartheta_{m,X}\left(\frac 14\right)$ if $r\in \left(\frac14,\infty\right)$,  and  let  $\varrho_{m,X}\colon (0, \infty) \to (0, \infty]$, with $2\leq m \leq n$, be    defined   as
\begin{equation}\label{rho}
\varrho_{m,X} (r) = r \left\|s^{-1+\frac{m-1}{n}}\chi_{(r^n,1)}(s)\right\|_{X'(0,1)} \quad \ \text{for} \  r\in \left(0,\tfrac14\right],
\end{equation}
and   $\varrho_{m,X} (r) =  \varrho_{m,X}\left(\frac 14\right)$ if $r\in \left(\frac14,\infty\right)$. 
Set 
\begin{align}\label{feb31}
\widehat \sigma_{m,X}   
= \begin{cases}
\vartheta_{1,X}   &\quad \text{if} \ m=1
\\  \vartheta_{m,X}  + \varrho_{m,X}   &\quad \text{if} \   m \in \{2, \dots, n-1\} 
\\ \varrho_{n,X}   &\quad \text{if} \ m=n.
\end{cases} 
\end{align}
Whenever   
\begin{equation}\label{feb30}
\lim_{r\to 0^+} \widehat \sigma_{m,X} (r) = 0,
\end{equation} 
the function $\widehat \sigma_{m,X}$ is, up to multiplicative constants, the optimal modulus of continuity for the H\"older type embedding   
\begin{equation}\label{cont}    W^mX(\Omega)\, \hookrightarrow \, C^{0, \widehat \sigma_{m,X}(\cdot)}(\Omega) 
\end{equation}
to hold.
This means that if there exists a modulus of continuity  $\sigma$ such that $W^{m}L^{p, q; \alpha, \beta}(\Omega) \hookrightarrow C^{0, \sigma(\cdot)}(\Omega)$, then $C^{0,\widehat \sigma_{m,X}(\cdot)}(\Omega) \hookrightarrow  C^{0,\sigma(\cdot)}(\Omega)$.  

In the proofs offered below, we shall make use of the following facts many times. 
Fixed any  $r\in \left(0,\frac14\right]$, then 
\begin{equation}\label{E:optH1}
\left((\cdot)^{-1+\frac {m}n}\chi_{(0,r)}(\cdot)\right)^{**}(t) \, \approx \,  t^{-1+\frac {m}n}\chi_{(0,r)}(t)   + r^{\frac mn} t^{-1} \chi_{(r,1)}(t)  \quad \  \text{for} \ t\in (0,1) 
\end{equation} 
when $m \in [1,  n-1]$, whereas if $m \in [2,  n]$, then
\begin{equation}\label{E:optHma}
\left((\cdot)^{-1+\frac {m-1}n}\chi_{(r,1)}(\cdot)\right)^*(s)=(s+r)^{-1+ \frac {m-1}n}\chi_{(0,1-r)}(s) \quad \  \text{for} \ s \in (0,1),
\end{equation}
and, since $r<1-r$,
\begin{equation}\label{E:optHmb}
\left((\cdot)^{-1+\frac {m-1}n}\chi_{(r,1)}(\cdot)\right)^{**}(t) \,  \approx \,  r^{-1+\frac {m-1}n}\chi_{(0,r)}(t)   + t^{-1+\frac {m-1}n}  \chi_{(r,1-r)}(t) + t^{-1}  \chi_{(1-r,1)}(t)           
\end{equation}  
for $t\in (0,1)$. Here, and throughout this section, the relation \lq\lq $\approx$'' holds up to constants independent of $r$.
  
\vspace{2mm}

\noindent{\textsc{Proof of Theorem~\ref{T:HOp}}.}  Let $p, q\in[1,\infty]$, $\alpha, \beta \in \R$ fulfill one of the alternatives  \eqref{c:LZnorm}. 
The statement follows from \cite{Monia1}*{Theorem~3.4}  recalled above, once detailed the behavior of the functions \eqref{theta}-\eqref{feb31} as $r \to 0^+$ in the case when $X(0,1)=L^{p, q; \alpha, \beta}(0,1)$.
Throughout this proof, for ease of notation, we set
 $$ \vartheta_m =\ \vartheta_{m, L^{p, q; \alpha, \beta}},\qquad  \varrho_m =\varrho_{m, L^{p, q; \alpha, \beta}}, \qquad  \widehat \sigma_m = \widehat \sigma_{m, L^{p, q; \alpha, \beta}}.$$    

We first claim that, for each $m \in \N$ with $1\leq m \leq n-1$,  
\begin{align}\label{E:HOz}
\vartheta_m(r) \  \approx  \ 
\begin{cases}
\infty  
 &\quad  \text{if}   \, \begin{cases} p= q=1, \ \alpha=0, \  \beta \geq 0     
 \\ 
 p= q=1, \ \alpha>0, \ \beta \in \R    
 \\[0.2ex]  
 p \in \big(1, \tfrac nm\big) \!, \  q \in [1, \infty], \  \alpha,   \beta  \in \R  
 \\[0.2ex]
p = \tfrac nm, \  q \in [1, \infty], \  \alpha < \frac{1}{q'},  \  \beta  \in \R  
\\[0.2ex]
p = \tfrac nm, \  q \in [1, \infty], \  \alpha = \frac{1}{q'},  \  \beta  < \frac{1}{q'}  
\\[0.2ex]
p = \tfrac nm, \  q \in (1, \infty], \  \alpha = \beta = \frac{1}{q'}
\end{cases}
\\[0.5ex]    1  
&\quad  \text{if} \ \,p=\frac nm, \   q=1, \    \alpha  =   \beta = 0
\\[0.2ex]        
\ell\ell^{\frac{1}{q'} - \beta}(r)   
&\quad   \text{if} \ \, p=\frac nm, \  q\in[1,\infty], \   \alpha   = \frac{1}{q'}, \   \beta   > \frac{1}{q'}   
\\[0.2ex] 
\ell^{\frac{1}{q'}  -\alpha }(r) \ell\ell^{- \beta}(r)
 &\quad  \text{if} \ \, p=\frac nm,  \  q\in[1,\infty], \    \alpha   > \frac{1}{q'}, \ \beta \in \R  
\\[0.2ex]   
r^{m-\frac np}\ell^{   -\alpha }(r) \ell\ell^{- \beta}(r)
&\quad   \text{if} \ \,  p \in \left(\frac nm, \infty\right) \!, \ q\in[1,\infty], \    \alpha,    \beta \in \R      
\\[0.2ex] 
r^m \ell^{  -\frac{1}{q}  -\alpha }(r) \ell\ell^{- \beta}(r)
& \quad   \text{if} \ \, p=\infty,   \  q\in[1,\infty), \    \alpha   < -\frac{1}{q},  \ \beta \in \R   \\[0.2ex]  
r^m \ell\ell^{-\frac{1}{q} - \beta}(r) 
& \quad   \text{if} \  \, p=\infty,    \  q\in[1,\infty), \        \alpha   = -\frac{1}{q} , \   \beta    < -\frac{1}{q}  \\[0.2ex] 
r^m \ell^{ -\alpha }(r) \ell\ell^{- \beta}(r)
& \quad     \text{if} \, \begin{cases} p=q=\infty,    \    \alpha    < 0,  \ \beta \in \R  \\   p=q=\infty,    \        \alpha  = 0 , \   \beta   < 0    
\end{cases}   
\\[0.5ex] 
r^m  & \quad   \text{if} \ \,  p=  q=\infty,  \   \alpha=  \beta=0 
\end{cases} 
\end{align}
near $0$. 
  
\noindent Indeed, if    $p=q =1$, and   either $\alpha= 0$ and $\beta \geq 0$  or  $\alpha >0$ and $\beta \in \R$, or  $p \in (1, \infty)$, thanks to   \eqref{E:1pie}, one has  
 that   
 \begin{equation*} 
\vartheta_{m} \left(r^{\frac1n}\right)    
= \left\|s^{-1+\frac mn}\chi_{(0,r)}(s)\right\|_{L\sp{p', q'; -\alpha, -\beta}(0,1)}  
= \left\| \Psi_{\frac mn - \frac{1}{p}; \, q, \alpha, \beta}\right\|_{L\sp{q'}(0,r)}       \quad \      \text{near} \ 0,  
\end{equation*}
whence the alternative assertions \eqref{E:HOz}\textsubscript{1-10} plainly follow from \eqref{E:H10}.
 
\noindent   Now, assume that $p=\infty$ and $q\in [1,\infty)$. If  $\alpha< -\frac{1}{q}$,  in view of \eqref{E:optH1} and \eqref{E:1pie}, one infers that  
 \begin{equation*}
 \begin{aligned}
 \vartheta_{m}\left(r^{\frac1n}\right)   
 & = \left\|s^{-1+\frac{m}{n}}\chi_{(0,r)}(s)\right\|_{L\sp{(1, q'; -\alpha -1, -\beta)}(0,1)} \,   \approx \,   \left\|\Psi_{\frac{m}{n}; \, q, \alpha+1, \beta}\right\|_{L^{q'}(0,r)}  +  r^{\frac mn}\left\|\Psi_{0; \, q, \alpha+1, \beta}\right\|_{L^{q'}(r,1)} \\  & \approx \,  r^{\frac{m}{n}} \, \ell^{- \frac{1}{q}-\alpha}   \ell\ell^{-\beta}(r)  +  r^{\frac mn}\left\|\Psi_{0; \, q, \alpha+1, \beta}\right\|_{L^{q'}(r,1)} \ \approx \,  r^{\frac{m}{n}} \, \ell^{- \frac{1}{q}-\alpha}   \ell\ell^{-\beta}(r) \quad \  \text{near} \ 0. 
\end{aligned} 
\end{equation*}    
Notice that in order to obtain the second equivalence we have made use of  \eqref{E:H10}\textsubscript{8} -with $\alpha$ replaced by $\alpha+1$- for the first  addend. As far as the second addend is concerned, one has to treat the cases $q\in (1,\infty)$  and $q=1$ separately. In the former case,  take conclusion  \eqref{E:H11}\textsubscript{2} -with $\alpha$ replaced by $\alpha+1$- according to property \eqref{UU}. In latter case $q=1$, it suffices to observe that 
\begin{equation}\label{new4}  
\left\|\Psi_{0;\,1, \alpha+1, \beta}  \right\|_{L\sp{\infty}(r,1)}   \,   \approx \,    \ell^{-\alpha-1}(r)\ell\ell^{-\beta}(r)  \quad \  \text{near} \ 0, 
\end{equation} in view of \eqref{new}\textsubscript{1} and \eqref{new}\textsubscript{3}. Thereby, the conclusion \eqref{E:HOz}\textsubscript{11} holds.

\noindent A similar argument works also in the case when $\alpha =- \frac{1}{q}$ and  $\beta <- \frac{1}{q}$, via natural modifications due to the different parameters involved. In fact,
\begin{equation*}
\begin{aligned}
\vartheta_{m}\left(r^{\frac1n}\right)   
& = \left\|s^{-1+\frac{m}{n}}\chi_{(0,r)}(s)\right\|_{L\sp{\left(1, q'; -\frac 1 {q'}, -\beta-1\right)}(0,1)}   \,   \approx \,   \left\|\Psi_{\frac{m}{n}; \, q,  \frac 1 {q'}, \beta+1}\right\|_{L^{q'}(0,r)}  +  r^{\frac mn}\left\|\Psi_{0; \, q, \frac 1{q'},  \beta+1}\right\|_{L^{q'}(r,1)} \\ 
& \approx \,    r^{\frac mn} \,    \ell\ell^{-\frac{1}{q}-\beta}   +  r^{\frac mn}\left\|\Psi_{0; \, q, \frac 1{q'},  \beta+1}\right\|_{L^{q'}(r,1)} \,   \approx \,    r^{\frac mn} \,    \ell\ell^{-\frac{1}{q}-\beta} \quad \  \text{near} \ 0,
\end{aligned} 
\end{equation*}
by the use of \eqref{E:H10}\textsubscript{8} for the second equivalence, of either \eqref{E:H11}\textsubscript{3} when $q\in (1,\infty)$ or \eqref{new2}\textsubscript{2} -which still holds with $1-r$ replaced by $1$ in view of \eqref{new}\textsubscript{1}-, all with $\beta$ replaced by $\beta+1$. Hence, the conclusion \eqref{E:HOz}\textsubscript{2} follows.
    
\noindent    
Finally, if $p = q= \infty$ and either  $\alpha<0$ or $\alpha=0$ and $\beta\leq 0$, coupling \eqref{E:1pie} and \eqref{E:H10}\textsubscript{8} ensures that
\begin{equation*}  
 \vartheta_{m} \left(r^{\frac1n}\right) \   
 = \  \left\|s^{-1+\frac{m}{n}}\chi_{(0,r)}(s)\right\|_{L\sp{1, 1; -\alpha, -\beta}(0,1)} 
 =  \left\|\Psi_{\frac mn; \, \infty,  \alpha, \beta}\right\|_{L^{1}(0,r)} \, \approx \, r^{\frac{m}{n}}  \,  \ell^{-\alpha}(r)  \ell\ell^{-\beta}(r) \quad \  \text{near} \ 0,  
\end{equation*}
namely, the properties  \eqref{E:HOz}\textsubscript{13-15} hold. Our claim \eqref{E:HOz} is thus verified.

\vspace{1mm}

Let us emphasize that
\begin{equation}\label{REMARK2} 
\lim_{r \to 0^+}  \vartheta_{m} (r)    =   0   \ \quad \text{only for those values} \ p, q, \alpha, \beta    \  \text{included in the formulas} \  (\ref{E:HOz})_{8-15}. 
\end{equation} 

\vspace{1mm}

Next, we claim that for each $m \in \N$ with $2\leq m \leq n$,  
 \begin{align}\label{E:Brho}
\varrho_m(r)  \, \approx \, 
\begin{cases}
\begin{aligned}    
\, & r^{m-n}  &\quad &   \text{if} \ \,   p=q=1, \ \alpha=\beta=0   &     
\\[0.2ex] 
\, & r^{m-\frac np} \ell^{-\alpha}(r) \ell\ell^{- \beta}(r)& &   \text{if} \, \begin{cases} p= q=1, \ \alpha=0, \  \beta > 0   
\\[0.2ex] 
 p= q=1, \ \alpha>0, \ \beta \in \R    
 \\[0.2ex]  
 p \in \big(1, \tfrac n{m-1}\big) \!, \  q \in [1, \infty], \   \alpha,   \beta  \in \R 
\end{cases}    
\\[0.5ex] 
&  r \, \ell^{   \frac{1}{q'}   -\alpha }(r) \ell\ell^{- \beta}(r) & & \text{if}   \  \,    p=\tfrac n{m-1}, \ q \in [1, \infty], \      \alpha   <  \tfrac{1}{q'}, \ \beta \in \R    
\\[0.2ex] 
&   r \, \ell\ell^{  \frac{1}{q'}   -\beta }(r)  & & \text{if}  \  \,   p=\tfrac n{m-1},  \ q \in [1, \infty], \   \alpha   =  \tfrac{1}{q'}, \    \beta  <  \tfrac{1}{q'}
\\[0.2ex] 
&    r \, \ell\ell\ell^{   \frac{1}{q'} }(r)  & & \text{if}     \  \,  p=\tfrac n{m-1}, \  q \in (1, \infty], \       \alpha  =   \beta = \tfrac{1}{q'}  
\\[0.2ex] 
 &    r  & &   \text{if} \, \begin{cases} p=\frac n{m-1}, \    q=1, \    \alpha  =   \beta = 0    
\\[0.2ex] 
p=\frac n{m-1}, \ q \in [1, \infty], \   \alpha= \frac{1}{q'}, \     \beta > \frac{1}{q'}  
\\[0.2ex]
p=\frac n{m-1}, \ q \in [1, \infty], \  \alpha> \frac{1}{q'}, \ \beta \in \R  
\\[0.2ex]
p \in \big(\frac n{m-1}, \infty\big)\!, \  q \in [1, \infty], \ \alpha,    \beta  \in \R   
\\[0.2ex]
p=\infty, \ q \in [1, \infty], \ \alpha    < -\frac1{q}, \ \beta \in \R   
\\[0.2ex]
p=\infty, \ q \in [1, \infty], \ \alpha    = -\frac1{q}, \     \beta< -\frac1{q} 
\\[0.2ex] 
p=q=\infty, \ \alpha=\beta=0
\end{cases}          
\end{aligned}
\end{cases} 
\end{align} near $0$.    

\noindent  In order to verify this claim, take first  $p=q =1$, and either $\alpha= 0$ and $\beta \geq 0$ or  $\alpha >0$ (and $\beta \in \R$). 
Via  \eqref{E:optHma} and  \eqref{E:1pie}, one has that 
\begin{align*}
\varrho_m\left(r^{\frac1n}\right)   
& = r^{\frac 1n}   \left\|s^{-1+\frac{m-1}{n}}\chi_{(r,1)}(s)\right\|_{L\sp{\infty, \infty; -\alpha, -\beta}(0,1)}   \\  
& \approx \    \max\left\{ r^{-1+\frac{m}{n}}\left\| \Psi_{0; \, 1, \alpha, \beta} \right\|_{L\sp{\infty}(0,r)}, \,  r^{\frac 1n} \left\| \Psi_{\frac {m-1}{n} -1; \,   1, \alpha, \beta}\right\|_{L\sp{\infty}(r,1-r)}\right\} \quad \  \text{near} \ 0.
\end{align*}  
Thus the assertions  \eqref{E:Brho}\textsubscript{1-3} hold  thanks to \eqref{E:H10}\textsubscript{5-7} and \eqref{E:H11}\textsubscript{1}.
     
\noindent When $p \in (1, \infty)$, then, according to \eqref{E:optHma} and \eqref{E:1pie}, 
\begin{align*} 
\varrho_m \left(r^{\frac1n}\right)  
& = r^{\frac 1n} \|s^{-1+\frac{m-1}{n}}\chi_{(r,1)}(s)\|_{L\sp{p', q'; -\alpha, -\beta}(0,1)} \\  
&\approx  r^{-1+\frac{m}{n}}\left\| \Psi_{\frac{1}{p'}; \, q, \alpha, \beta}\right\|_{L\sp{q'}(0,r)} +  r^{\frac 1n} \left\| \Psi_{\frac{m-1}{n} - \frac 1p; \, q, \alpha, \beta} \right\|_{L\sp{q'}(r,1-r)}   \quad \  \text{near} \ 0.
\end{align*}   
The alternative conclusions \eqref{E:Brho}\textsubscript{4-11} are therefore straightforward consequences of coupling \eqref{E:H10}\textsubscript{8}   with \eqref{E:H11}.

 \noindent  Next, let $p =\infty$ and $q \in [1,\infty)$.
We initially assume that $\alpha  <- \frac 1q$. Then the properties \eqref{E:optHmb} and \eqref{E:1pie} entail that     
\begin{equation*}
\begin{aligned}
\varrho_m\left(r^{\frac1n}\right)   
& = r^{\frac1n} \left\|s^{-1+\frac{m-1}{n}}\chi_{(r,1)}(s)\right\|_{L\sp{(1, q'; -\alpha-1, -\beta)}(0,1)}\\ 
& \approx \,  r^{-1+\frac{m}{n}}\left\| \Psi_{1; \, q, \alpha+1, \beta}\right\|_{L\sp{q'}(0,r)} +  r^{\frac1n} \left\|\Psi_{\frac{m-1}{n}; \, q, \alpha+1, \beta}\right\|_{L\sp{q'}(r,1-r)} +  r^{\frac1n} \left\|\Psi_{0; \, q, \alpha+1, \beta} \right\|_{L\sp{q'}(1-r,1)} 
\end{aligned} 
\end{equation*} 
near $0$. 
Since, being  $\alpha  <- \frac 1q$, \begin{equation}\label{E:HOz1}\left\| \Psi_{0; \, q, \alpha+1, \beta}\right\|_{L\sp{q'}(1-r,1)} \  \approx \  1 \quad \  \text{near} \ 0\end{equation} for every  $\beta  \in \R$ (cf.~\eqref{new}\textsubscript{1} and \eqref{new}\textsubscript{3} in the case when $q=1$),  the assertion \eqref{E:Brho}\textsubscript{12} with $q \neq \infty$ can be deduced from \eqref{E:H10}\textsubscript{8}    and    \eqref{E:H11}\textsubscript{8}.  

\noindent When $\alpha = -\frac1q$ and $\beta <-\frac1q$,  via  \eqref{E:optHmb}  and \eqref{E:1pie},  one has that
\begin{equation*}
\begin{aligned}
\varrho_m\left(r^{\frac1n}\right) 
& =   r^{\frac1n} \left\|s^{-1+\frac{m-1}{n}}\chi_{(r,1)}(s)\right\|_{L\sp{(1, q'; -\frac{1}{q'}, -\beta-1)}(0,1)} \\ 
& \approx    \, r^{-1+\frac{m}{n}}\left\| \Psi_{1; \, q, \frac{1}{q'}, \beta+1}\right\|_{L\sp{q'}(0,r)} +  r^{\frac1n} \left\|\Psi_{\frac{m-1}{n}; \, q, \frac{1}{q'}, \beta+1}\right\|_{L\sp{q'}(r,1-r)} +  r^{\frac1n}\left\|\Psi_{0; \, q, \frac{1}{q'}, \beta+1} \right\|_{L\sp{q'}(1-r,1)}  
\end{aligned} 
\end{equation*}    
near $0$. Note that \begin{equation}\label{E:HOz2}\left\| \Psi_{0; \, q, \frac{1}{q'}, \beta+1}\right\|_{L\sp{q'}(1-r,1)} \  \approx \  1 \quad \  \text{near} \ 0\end{equation} since $\beta  <- \frac 1q$ (cf.~\eqref{new}\textsubscript{2}   in the case when $q=1$).
Thus the conclusion  \eqref{E:Brho}\textsubscript{13}  with $q \neq \infty$ follows from combining  \eqref{E:H10}\textsubscript{8},   \eqref{E:H11}\textsubscript{8}  and \eqref{E:HOz2}.

\noindent  Finally, suppose that $p =q =\infty$, and either $\alpha <0$ or $\alpha= 0$ and $\beta \leq 0$. Then, for these choices, 
\begin{equation}\label{E:optMc7H}
\begin{aligned}
\varrho_m\left(r^{\frac1n}\right)    
& = r^{\frac 1n}    \left\|s^{-1+\frac{m-1}{n}}\chi_{(r,1)}(s)\right\|_{L\sp{1, 1; -\alpha, -\beta}(0,1)} \\ 
& \approx \,  r^{-1+\frac{m}{n}}\left\| \Psi_{1; \, \infty, \alpha, \beta}\right\|_{L\sp{1}(0,r)} +    r^{\frac 1n} \left\|\Psi_{\frac {m-1}n;\,  \infty, \alpha, \beta}\right\|_{L\sp{1}(r,1-r)} 
 \ \approx \  r^{\frac{1}{n}}     \quad \ \text{near} \ 0. \end{aligned} 
\end{equation}  
The latter equivalence holds owing to \eqref{E:H10}\textsubscript{8} and \eqref{E:H11}\textsubscript{1}. Thus, from \eqref{E:optMc7H}  we  get  the missing case  $q = \infty$ in \eqref{E:Brho}\textsubscript{12-13} as well as \eqref{E:Brho}\textsubscript{14}, ending the proof of the claim \eqref{E:Brho}. 

\vspace{1mm}    
Observe that 
\begin{equation}\label{REMARK3} 
\lim_{r \to 0^+}  \varrho_{m} (r)    =  0     \ \quad \text{for} \ m   \in  \{2, \dots,n \}. 
\end{equation} 
\vspace{1mm}
             
We are now in position to conclude the proof. 
Part~\textsc{(I)} follows from the conclusions \eqref{E:HOz} and \eqref{REMARK2} with the choice $m=1$.
Part~\textsc{(II)} can be derived from the properties \eqref{E:HOz}-\eqref{E:Brho} and \eqref{REMARK3}, through elementary computation involving possibly the following fact: for any $\varepsilon \in \R$, with $\varepsilon\neq0$,  
\begin{equation*} 
\lim_{r \to 0^+}   r^{\varepsilon}   \ell^{\alpha}(r)\ell\ell^{\beta}(r)\ell\ell\ell^{\gamma}(r)  =  \lim_{r \to 0^+}   r^{\varepsilon}   
\end{equation*} 
holds for every $\alpha, \beta, \gamma \in \R$. 
Finally,  Part~\textsc{(III)} is precisely the claim \eqref{E:Brho} in the case when $m=n$, keeping in mind that \eqref{REMARK3} is in force. \qed

\begin{remark}\label{R:forCamP} \rm  It is clear from the above proof that, for any $m \in \{2,\dots, n-1\}$, the behavior near $0$ of the relevant function $\widehat \sigma_{m}= \widehat \sigma_{m, L^{p, q; \alpha, \beta}}$  -described in \eqref{feb31} corresponding to a general rearrangement\hyp{}invariant space $X(0,1)$- is dictated  by the function $\varrho_{m}=\varrho_{m,L^{p, q; \alpha, \beta}}$ -accordingly defined in \eqref{rho}- when the parameters $p, q , \alpha, \beta$ fulfill one of the alternatives \eqref{c:LZnorm} with $p >\frac nm$. The same phenomenon trivially occurs in the case when $m=n$, with no restriction on the range of the parameter $p$.
\end{remark}
    
\vspace{2mm}

\noindent{\textsc{Proof of Theorem~\ref{T:HO*}}.}   Assume that $q\in[1,\infty]$, $\alpha, \beta \in \R$ fulfill one of the alternatives~\eqref{c:LZnorm2}. In the light of \cite{Monia1}*{Theorem~3.4} recalled at beginning of this subsection, we need to show the precise behavior   as $r \to 0^+$ of the function, given by \eqref{feb31} with the choices that $m=n$ and $X(0,1)=L^{(1, q; \alpha, \beta)}(0,1)$, as well as that the necessary condition \eqref{feb30} is actually fulfilled.    
We simply denote it as $\widehat \omega_n$, namely, we set 
\begin{equation}\label{H=C}    
\widehat \omega_n =  \widehat \sigma_{n, L^{(1, q; \alpha, \beta)}} =  \varrho_{n, L^{(1, q; \alpha, \beta)}}.
\end{equation}   
For the time being, let $r\in \left(0,\frac14\right]$. 

If $q \in [1, \infty]$ and $\alpha    > -\frac{1}{q}$,  via   \eqref{E:optHma}  and \eqref{E:1pie}, one has that  
\begin{align*} 
\widehat \omega_n\left(r^{\frac1n}\right) \,   
&=   \,  r^{\frac 1n}\|s^{- \frac{1}{n}}\chi_{(r,1)}(s)\|_{L\sp{\infty, q'; -\alpha-1, -\beta}(0,1)}   \, \\ 
& \approx \, \left\|\Psi_{0; \, q, \alpha+1, \beta}\right\|_{L\sp{q'}(0,r)} +    r^{\frac 1n} \left\|\Psi_{-\frac 1n; \, q, \alpha+1, \beta}\right\|_{L\sp{q'}(r,1-r)}   \quad \ \text{near} \ 0.
\end{align*}
Thus, the conclusion \eqref{E:HO*B}\textsubscript{1} follows from an application of \eqref{E:H10}\textsubscript{7}   -with $\alpha$ replaced by $\alpha+1$- and \eqref{E:H11}\textsubscript{1}.   
 
\noindent For $q \in [1, \infty]$, $\alpha  = -\frac{1}{q}$ and $\beta   > -\frac{1}{q}$, the equations \eqref{E:optHma} and \eqref{E:1pie} assure that
\begin{align*} 
\widehat \omega_n\left(r^{\frac1n}\right) \,  
& =   \,  r^{\frac 1n}\|s^{- \frac{1}{n}}\chi_{(r,1)}(s)\|_{L\sp{\infty, q'; -\frac{1}{q'}, -\beta -1}(0,1)}  \, \\
&\approx \, \left\|\Psi_{0; \, q, \frac{1}{q'}, \beta+1}\right\|_{L\sp{q'}(0,r)} +   r^{\frac 1n}  \left\|\Psi_{- \frac{1}{n}; \, q, \frac{1}{q'}, \beta+1}\right\|_{L\sp{q'}(r,1-r)}    \quad \ \text{near} \ 0,  
\end{align*} 
whence the conclusion \eqref{E:HO*B}\textsubscript{2} is obtained by applying \eqref{E:H10}\textsubscript{6}   -with $\beta$ replaced by $\beta+1$- and \eqref{E:H11}\textsubscript{1}.
 
\noindent Finally, the conclusion \eqref{E:HO*B}\textsubscript{3} can be derived from Lemma~\ref{L:H2} once observed that, for $q \in [1, \infty)$ and $\alpha = \beta= - \frac{1}{q}$, 
\begin{equation*} 
\begin{aligned}
\widetilde  \omega_{n}\left(r^{\frac1n}\right)     
&=   r^{\frac 1n}\|s^{- \frac{1}{n}}\chi_{(r,1)}(s)\|_{L\sp{\infty, q'; -\frac{1}{q'}, -\frac{1}{q'}, -1}(0,1)} \\   
& \approx  \|  \Psi_{0; \, q, \frac{1}{q'}, \frac{1}{q'}} \ell\ell\ell^{-1}\|_{L^{q'}(0,r)}    +   r^{\frac 1n}   \left\|\Psi_{- \frac{1}{n}; \, q, \frac{1}{q'}, \frac{1}{q'}} \ell\ell\ell^{-1}\right\|_{L^{q'}(r,1-r)}   \quad \ \text{near} \ 0, 
\end{aligned}
\end{equation*}  
thanks to  \eqref{E:optRa}  and \eqref{E:1pie}. It remains just to note that, in each of the previous cases, the necessary condition \eqref{feb30}  clearly holds.   
\qed

\subsection{Proof of Embeddings into  Morrey and Campanato type spaces}\label{P:MorreyCamp}   At the outset, to keep the paper self\hyp{}contained as possible, we recall the content of \cite{CCPS2}*{Theorem~2.2} here applied. Given any $m \in \N$, with $m < n$, and  any r.i.~function norm $\|\cdot\|_{X(0,1)}$,   the   function $\widetilde\varphi_{m,X}\colon(0,\infty)\to(0,\infty)$, defined as  
\begin{equation}\label{E:optMo}
\widetilde\varphi_{m,X}\left(r\right) = \|s^{-1+\frac{m}{n}}\chi_{(r^n,1)}(s)\|_{X'(0,1)} \quad  \text{if} \ \, r\in \left(0,\tfrac14\right] 
\end{equation}
and  $\widetilde\varphi_{m,X}(r) = \widetilde\varphi_{m,X}\left(\frac 14\right)$ if $r\in \left(\tfrac14,\infty\right)$, is an admissible function. Moreover,  the space $\mathcal M^{\widetilde\varphi_{m,X}(\cdot)}(\Omega)$ is the optimal (smallest possible) target Morrey space in the embedding 
\begin{equation*}
W^{m}X(\Omega) \to \mathcal M^{\widetilde\varphi_{m,X}(\cdot)}(\Omega) 
\end{equation*} for all John domains $\Omega$ of $\R^n$. 
This means that if there exists an admissible function $\varphi$ such that $W^{m}X (\Omega) \hookrightarrow \mathcal M^{\varphi(\cdot)}(\Omega)$, then $\mathcal M^{\widetilde\varphi_{m,X}(\cdot)}(\Omega) \hookrightarrow \mathcal M^{\varphi(\cdot)}(\Omega)$. 
  
For later reference, we further note that for each $r\in \left(0,\frac14\right]$ one has that   
\begin{equation}\label{E:optRa}
\left((\cdot)^{-1+\frac mn}\chi_{(r,1)}(\cdot)\right)^*(s)=(s+r)^{-1+\frac mn}\chi_{(0,1-r)}(s) \quad \  \text{for} \ s \in (0,1),
\end{equation}
and, since $r<1-r$,
\begin{equation}\label{E:optMoA}
\left((\cdot)^{-1+\frac mn}\chi_{(r,1)}(\cdot)\right)^{**}(t) \, \approx \,  r^{-1+\frac mn}\chi_{(0,r)}(t)   + t^{-1+\frac mn}  \chi_{(r,1-r)}(t) + t^{-1}  \chi_{(1-r,1)}(t)   \quad   \text{for} \    t\in (0,1).     
\end{equation} 

\vspace{2mm}

\noindent{\textsc{Proof of Theorem~\ref{T:Mor}}.}   
 {\sc(A)} The statement will follow from \cite{CCPS2}*{Theorem~2.2}, once we discuss the exact behavior near $0$ of the function  \eqref{E:optMo} in the case when $X(0,1)=L^{p, q; \alpha, \beta}(0,1)$, for  $p, q\in[1,\infty]$, $\alpha, \beta \in \R$ satisfying one of the alternatives~\eqref{c:LZnorm}. Indeed, as remarked in Section~{2.6}, the behavior near $0$ is the only piece of information about an admissible function   $\widetilde\varphi_{m,L^{p, q; \alpha, \beta}}$ which is needed for the definition of the optimal space  $\mathcal M^{\widetilde\varphi_{m,L^{p, q; \alpha, \beta}}(\cdot)}(\Omega)$. 
 
For simplicity of notation, throughout this proof we denote $\widetilde\varphi_{m,L^{p, q; \alpha, \beta}}$ by $\widetilde\varphi_m$.  
We also suppose, for the time being, that  $r\in \left(0,\frac14\right]$. 

Consider first the case when  $p=q =1$, and  either  $\alpha= 0$ and $\beta \geq 0$ or  $\alpha >0$. Owing to \eqref{E:optRa} and \eqref{E:1pie}, one has that 
\begin{align*} 
\widetilde\varphi_m\left(r^{\frac1n}\right)    
& =  \|s^{-1+\frac{m}{n}}\chi_{(r,1)}(s)\|_{L\sp{\infty, \infty; -\alpha, -\beta}(0,1)}  \,  \\    
&  \approx \,  \max\left\{ r^{-1+\frac{m}{n}}\left\| \Psi_{0; \, 1, \alpha, \beta} \right\|_{L\sp{\infty}(0,r)},  \ \left\| \Psi_{\frac mn-1; \,   1, \alpha, \beta}\right\|_{L\sp{\infty}(r,1-r)}\right\}  \quad \ \text{near} \ 0.
\end{align*}   
Thus, taking into account that $m<n$,  the conclusions \eqref{E:MOpAa}\textsubscript{1,2} follow from the assertions \eqref{E:H10}\textsubscript{5-7}  and \eqref{E:H11}\textsubscript{1}.
                   
\noindent When $p \in (1, \infty)$,  according to \eqref{E:optRa}  and \eqref{E:1pie}, we get 
 \begin{align*}
\widetilde\varphi_m\left(r^{\frac1n}\right)  
& =  \|s^{-1+\frac{m}{n}}\chi_{(r,1)}(s)\|_{L\sp{p', q'; -\alpha, -\beta}(0,1)} \, \\ 
& \approx \, r^{-1+\frac{m}{n}}\left\| \Psi_{\frac{1}{p'}; \, q, \alpha, \beta}\right\|_{L\sp{q'}(0,r)} +  \left\| \Psi_{\frac{m}{n} - \frac 1p; \, q, \alpha, \beta} \right\|_{L\sp{q'}(r,1-r)}   \quad \ \text{near} \ 0.  
\end{align*}   
Thus, an application of the properties \eqref{E:H10}\textsubscript{8} and \eqref{E:H11} yields the conclusions \eqref{E:MOpAa}\textsubscript{3-10}. 

\noindent Take next $p =\infty$ and $q \in [1,\infty)$. 
Via  \eqref{E:optMoA}  and \eqref{E:1pie}, one has that       
\begin{equation*}
\begin{aligned}
\widetilde\varphi_m\left(r^{\frac1n}\right) \, 
& =\,  \|s^{-1+\frac{m}{n}}\chi_{(r,1)}(s)\|_{L\sp{(1,q';-\alpha-1, -\beta)}(0,1)} \\ 
& \approx \,  r^{-1+\frac{m}{n}}\left\| \Psi_{1; \, q, \alpha+1, \beta}\right\|_{L\sp{q'}(0,r)} +   \left\|\Psi_{\frac mn; \, q, \alpha+1, \beta}\right\|_{L\sp{q'}(r,1-r)} +  \left\| \Psi_{0; \, q, \alpha+1, \beta}\right\|_{L\sp{q'}(1-r,1)}   \quad \ \text{near} \ 0   
\end{aligned} 
\end{equation*}  for $\alpha    < - \frac 1q$, and 
\begin{equation*}
\begin{aligned}
\widetilde\varphi_m\left(r^{\frac1n}\right) \, 
& = \,  \|s^{-1+\frac{m}{n}}\chi_{(r,1)}(s)\|_{L\sp{(1, q'; -\frac{1}{q'}, -\beta-1)}(0,1)}\\ 
& \approx \,  r^{-1+\frac{m}{n}}\left\| \Psi_{1; \, q, \frac{1}{q'}, \beta+1}\right\|_{L\sp{q'}(0,r)} +   \left\|\Psi_{\frac mn; \, q, \frac{1}{q'}, \beta+1}\right\|_{L\sp{q'}(r,1-r)} +  \left\| \Psi_{0; \, q, \frac{1}{q'}, \beta+1}\right\|_{L\sp{q'}(1-r,1)}   \ \ \text{near} \ 0   
\end{aligned} 
\end{equation*} for $\alpha =0$ and $\beta< - \frac 1q$. 
Thus the behavior \eqref{E:MOpAa}\textsubscript{11} with $q \in [1,\infty)$ can be deduced from combining the properties \eqref{E:H10}\textsubscript{8},  \eqref{E:H11}\textsubscript{8} and \eqref{E:HOz1}, whereas \eqref{E:MOpAa}\textsubscript{12} with $q \in [1,\infty)$ follows from  \eqref{E:H10}\textsubscript{8},  \eqref{E:H11}\textsubscript{8} and \eqref{E:HOz2}.

\noindent It remains to prove conclusions \eqref{E:MOpAa}\textsubscript{11,12} under the condition that $q=\infty$, and \eqref{E:MOpAa}\textsubscript{13}. For this, it is suffices to observe that
if $p =q =\infty$, and   either  $\alpha <0$   or  $\alpha= 0$ and $\beta \leq 0$, then  via   \eqref{E:optRa}  and \eqref{E:1pie} one infers that
\begin{align*} 
\widetilde\varphi_m\left(r^{\frac1n}\right) \,  
& =   \,  \|s^{-1+\frac{m}{n}}\chi_{(r,1)}(s)\|_{L\sp{1, 1; -\alpha, -\beta}(0,1)}  \,  \\ 
& \approx \,  r^{-1+\frac{m}{n}}\left\| \Psi_{1; \, \infty, \alpha, \beta}\right\|_{L\sp{1}(0,r)} +    \ \left\|\Psi_{\frac mn;\,  \infty,  \alpha, \beta}\right\|_{L\sp{1}(r,1-r)} \quad \ \text{near} \ 0, 
\end{align*} and apply the equations \eqref{E:H10}\textsubscript{8}    and    \eqref{E:H11}\textsubscript{1}.     
The statement {\sc(A)} is thus fully proved. 
 
\vspace{1mm}

\noindent{\sc(B)} We have now to detect the explicit behavior near $0$ of the function \eqref{E:optMo} in the case when $X(0,1)=L^{(1, q; \alpha, \beta)}(0,1)$, for $q\in[1,\infty]$, $\alpha, \beta \in \R$ fulfilling one of the alternatives~\eqref{c:LZnorm2}. For easy of notation, throughout this proof we denote $\widetilde\varphi_{m,L^{(1, q; \alpha, \beta)}}$ by $\widetilde\psi_m$, and we  
assume, for the time being, that $r\in \left(0,\frac14\right]$.  
 
If $q \in [1, \infty]$ and $\alpha  > -\frac{1}{q}$,  via  \eqref{E:optRa} and \eqref{E:1pie}, one yields that 
\begin{align*} 
\widetilde\psi_m\left(r^{\frac1n}\right) \,   
& =   \,  \|s^{-1+\frac{m}{n}}\chi_{(r,1)}(s)\|_{L\sp{\infty, q'; -\alpha-1, -\beta}(0,1)}   \,  \\ 
& \approx \,  r^{-1+\frac{m}{n}}\left\|\Psi_{0; \, q, \alpha+1, \beta}\right\|_{L\sp{q'}(0,r)} +     \left\|\Psi_{\frac mn - 1; \, q, \alpha+1, \beta}\right\|_{L\sp{q'}(r,1-r)}  \quad \ \text{near} \ 0.
\end{align*}   
Thus, an application of  \eqref{E:H10}\textsubscript{7}   -with $\alpha$ replaced by $\alpha+1$- and \eqref{E:H11}\textsubscript{1} provides the conclusion \eqref{E:MOpBb}\textsubscript{1}.   
 
\noindent For $q \in [1, \infty]$, $\alpha  = -\frac{1}{q}$ and $\beta   > -\frac{1}{q}$, the equations \eqref{E:optRa} and \eqref{E:1pie} assure that
\begin{align*} 
\widetilde\psi_m \left(r^{\frac{1}{n}} \right)   
& =   \|s^{-1+\frac{m}{n}}\chi_{(r,1)}(s)\|_{L\sp{\infty, q'; -\frac{1}{q'}, -\beta -1}(0,1)}  \, \\ 
& \approx \,   r^{-1+\frac{m}{n}}\left\|\Psi_{0; \, q, \frac{1}{q'}, \beta+1}\right\|_{L\sp{q'}(0,r)} +     \left\|\Psi_{\frac mn - 1; \, q, \frac{1}{q'}, \beta+1}\right\|_{L\sp{q'}(r,1-r)}   \quad \ \text{near} \ 0,  
\end{align*}   
whence the conclusion \eqref{E:MOpBb}\textsubscript{2} is obtained by applying \eqref{E:H10}\textsubscript{6}   -with $\beta$ replaced by $\beta+1$- and \eqref{E:H11}\textsubscript{1}.
 
\noindent Finally, the conclusion \eqref{E:MOpBb}\textsubscript{3} follows from Lemma~\ref{L:H2} once observed that, for $q \in [1, \infty)$ and $\alpha = \beta= - \frac{1}{q}$, via \eqref{E:optRa}  and \eqref{E:1pie},   
\begin{align*} 
\widetilde\psi_m \left(r^{\frac{1}{n}} \right)     
& =   \|s^{-1+\frac{m}{n}}\chi_{(r,1)}(s)\|_{L\sp{\infty, q'; -\frac{1}{q'}, -\frac{1}{q'}, -1}(0,1)} \\   
&\approx  r^{-1+\frac{m}{n}}\|  \Psi_{0; \, q, \frac{1}{q'}, \frac{1}{q'}} \ell\ell\ell^{-1}\|_{L^{q'}(0,r)}    +   \|  \Psi_{\frac{m}{n}-1; \, q, \frac{1}{q'}, \frac{1}{q'}} \ell\ell\ell^{-1}\|_{L^{q'}(0,r)}  \quad \ \text{near} \ 0. 
\end{align*} 
This completes the proof of Part~{\sc(B)}.
\qed

\vspace{2mm}
 In preparation for the proofs of Theorems~\ref{T:CampP} and~\ref{T:Ca*}, for convenience of the reader, we recall that  \cite{CCPS2}*{Theorem~2.6} tells us the following. Given $m \in \N$, with $m \leq n$, and an r.i.~function $\|\cdot\|_{X(0,1)}$, the function $\widehat \varphi_{m, X}\colon(0,\infty)\to(0,\infty)$, defined for $r \in \left(0,\frac14\right]$ as  
\begin{equation}\label{E:optCa}
\widehat \varphi_{m, X}\left(r\right) \  = \ 
\begin{cases}
r^{-n+1}\|\chi_{(0,r^n)}\|_{X'(0,1)} & \text{if} \ \, m =  1  \\[0.2ex]   \varrho_{m,X} (r) &     \text{if} \ \, m \in \{2, \dots, n\} 
\end{cases}
\end{equation}
and continued by $\widehat \varphi_{m, X}(r) = \widehat \varphi_{m, X}\left(\frac 14\right)$ if $r\in \left(\frac14,\infty\right)$, is an admissible function. Here, the function $\varrho_{m,X}$ is the one defined in  \eqref{rho}.
Moreover,   the space $\mathcal L^{\widehat \varphi_{m, X}(\cdot)}(\Omega)$ is the optimal (smallest possible) target Campanato space in the embedding 
\begin{equation*}
W^{m}X(\Omega) \to \mathcal L^{\widehat \varphi_{m, X}(\cdot)}(\Omega) 
\end{equation*} 
for all John domains $\Omega$ of $\R^n$.
This means that if there exists an admissible function  $\varphi$ such that $W^{m}X (\Omega) \hookrightarrow \mathcal L^{\varphi(\cdot)}(\Omega)$, then $\mathcal L^{\widehat \varphi_{m, X}(\cdot)}(\Omega) \hookrightarrow \mathcal L^{\varphi(\cdot)}(\Omega)$. 
        
\vspace{2mm}

\noindent{\textsc{Proof of Theorem~\ref{T:CampP}}.} Let $m \in \N$, with $m \leq n$, and assume that $q\in[1,\infty]$, $\alpha, \beta \in \R$ satisfy one of the alternatives~\eqref{c:L1}. In the light of  \cite{CCPS2}*{Theorem~2.6} just recalled, we have to detect explicitly the behavior near $0$ of  the optimal admissible function \eqref{E:optCa} in the case when $X(0,1)=L^{p, q; \alpha, \beta}(0,1)$. Indeed, as remarked in Section~{2.6}, the behavior near $0$ is the only piece of information about an admissible function  $\widehat\varphi_{m,L^{p, q; \alpha, \beta}}$ which is needed for the definition of the optimal space  $\mathcal L^{\widehat\varphi_{m,L^{p, q; \alpha, \beta}}(\cdot)}(\Omega)$.
For simplicity of notation, throughout this proof we denote $\widehat \varphi_{m, L^{p, q; \alpha, \beta}}$ by $\widehat \varphi_{m}$.
 
Note that 
\begin{equation}\label{E:new0}
\widehat \varphi_{m} \ = \   \varrho_m    \ \quad \text{for} \ m   \in  \{2, \dots,n \},
\end{equation}  
and the exact behavior near $0$ of the function $\varrho_m$ -corresponding to the different alternatives~\eqref{c:L1}- has been already displayed in \eqref{E:Brho}.  Hence Parts~\textsc{(II)}-\textsc{(III)} follow directly from coupling  \eqref{E:new0}  with  \eqref{E:Brho}, via Remark~\ref{R:forCamP}. 

We have thus just to focus on Part~\textsc{(I)}. Assume, for the time being, that $r\in \left(0,\frac14\right]$.
The  conclusions \eqref{E:CaPAa}\textsubscript{1-3} -for $p=q =1$, and either $\alpha= 0$ and $\beta \geq 0$ or $\alpha >0$ and $\beta \in \R$, or $p \in (1, n]$- plainly follows  from  \eqref{E:1pie} and \eqref{E:H10}\textsubscript{8}. Indeed,   
\begin{equation}\label{E:new1}
\begin{aligned}
\widehat\varphi_{1}\left(r^{\frac1n}\right)   
& = \,  r^{-1+\frac{1}{n}}\left\|\chi_{(0, r)}\right\|_{L\sp{p', q'; -\alpha, -\beta}(0,1)} \, = \,  r^{-1 +\frac{1}{n}}\left\|\Psi_{\frac{1}{p'}; q, \alpha, \beta } \right\|_{L\sp{q'}(0,r)} \\ 
& \,  \approx \,  r^{\frac1n-\frac{1}{p}}\ell^{-\alpha}(r) \ell\ell^{-\beta}(r)  \quad \ \text{near} \ 0. 
\end{aligned}      
\end{equation}  
 
The chain \eqref{E:new1} still holds when $p \in (n, \infty)$, $q \in [1, \infty]$, $\alpha, \beta \in \R$, and $p=q=\infty$ and either $\alpha<0$, $\beta \in \R$ or $\alpha=0$, $\beta \leq 0$. As a consequence, in the cases corresponding to these choices of the parameters $p, q ,\alpha$ and $\beta$, one infers that $\widehat\varphi_{1} \approx \widehat \sigma_{1}$, where $\widehat \sigma_{1}$ is described in \eqref{E:HOpAa}.

\noindent Finally, suppose that $p=\infty$ and $q \in [1, \infty)$. As
\begin{equation}\label{E:optCaB}
\chi_{(0,r)}  ^{**}(s) \  = \  \chi_{(0,r)}(s)  + \frac{r}{s} \, \chi_{(r,1)}(s)  \quad \  \text{for} \ s\in (0,1),
\end{equation}  
thanks to \eqref{E:1pie}, one entails that       
\begin{equation*}
\begin{aligned}
\widehat\varphi_1\left(r^{\frac1n}\right)     
&= \,  r^{-1+\frac{1}{n}}\left\|\chi_{(0, r)}\right\|_{ L\sp{(1, q'; -\alpha-1, -\beta)} (0,1)} \\
&\approx \,  r^{-1 +\frac{1}{n}}\left\|\Psi_{1; \, q, \alpha+1, \beta}\right\|_{L\sp{q'}(0,r)}  \, + \,   r^{\frac{1}{n}}\left\|\Psi_{0; \, q, \alpha+1, \beta}\right\|_{L\sp{q'}(r,1-r)} \, + \,       r^{\frac{1}{n}}\left\|\Psi_{0; \, q, \alpha+1, \beta}\right\|_{L\sp{q'}(1-r,1)}  
\end{aligned} 
\end{equation*}   near $0$ for $\alpha  < - \frac{1}{q}$, and 
\begin{equation*}\begin{aligned}
\widehat\varphi_1\left(r^{\frac1n}\right)     
&= \,  r^{-1+\frac{1}{n}} \| \chi_{(0,r)}\|_{L\sp{(1, q'; -\frac{1}{q'}, -\beta-1)}(0,1)} \\
& \approx \,  r^{-1+\frac{1}{n}}\left\| \Psi_{1; \, q, \frac{1}{q'}, \beta+1}\right\|_{L\sp{q'}(0,r)} +  \,  r^{\frac{1}{n}}\left\|\Psi_{0; \, q, \frac{1}{q'}, \beta+1}\right\|_{L\sp{q'}(r,1-r)} + r^{\frac{1}{n}} \left\| \Psi_{0; \, q, \frac{1}{q'}, \beta+1}\right\|_{L\sp{q'}(1-r,1)}       
\end{aligned} 
\end{equation*}  
near $0$ for $\alpha  =- \frac 1q$ and $\beta< - \frac 1q$. 
Altogether, from \eqref{E:H10}\textsubscript{8}, \eqref{E:H11}\textsubscript{2} and \eqref{E:HOz1} for   $\alpha < - \frac 1q$, $\beta \in \R$ and from  \eqref{E:H10}\textsubscript{8}, \eqref{E:H11}\textsubscript{3}  and \eqref{E:HOz2} when $\alpha =- \frac 1q$, $\beta< - \frac 1q$, one gets that the equivalence $\widehat\varphi_{1} \approx \widehat \sigma_{1}$ pertains in these cases as well, ending the proof.
\qed
 
\vspace{2mm}

\noindent{\textsc{Proof of Theorem~\ref{T:Ca*}}.}   Let $m \in \N$, with $m \leq n$, and assume that $q\in[1,\infty]$, $\alpha, \beta \in \R$ fulfill one of the alternatives  in \eqref{c:LZnorm2}.  We will disclose the precise description of the behavior near $0$ of the function  $\widehat\varphi_{m, X}$ defined in \eqref{E:optCa} in the case when $X(0,1)=L^{(1, q; \alpha, \beta)}(0,1)$. The statement will follow from \cite{CCPS2}*{Theorem~2.6}. For simplicity of notation, in what follows, we denote $\widehat \varphi_{m, L^{(1, q; \alpha, \beta)}}$ by $\widehat\psi_m$ and we assume that $r\in \left(0,\frac14\right]$.

\vspace{1mm}
\noindent {\sc(I)} First, let $q \in [1, \infty]$ and $\alpha > - \frac{1}{q}$. Then,  according to  \eqref{E:1pie} and  \eqref{E:H10}\textsubscript{7} -with $\alpha$ replaced by $\alpha+1$-, one has that 
\begin{align*} 
\widehat\psi_1\left(r^{\frac1n}\right) \,  
& = \  r^{-1+\frac{1}{n}}\left\|\chi_{(0, r)}\right\|_{L\sp{\infty, q';-\alpha-1, -\beta} (0,1)} \\ 
& = \,  r^{-1 +\frac{1}{n}}\left\|\Psi_{0; \, q, \alpha+1, \beta } \right\|_{L\sp{q'}(0,r)}  \approx \,  r^{-1 +\frac{1}{n}}\ell^{-\frac{1}{q}-\alpha}(r) \ell\ell^{-\beta}(r)   \quad \ \text{near} \ 0, 
\end{align*}     
whence  \eqref{E:Ca*Aa}\textsubscript{1} follows.

\noindent Next, take $q \in [1, \infty]$, $\alpha  = -\frac{1}{q}$ and $\beta   > -\frac{1}{q}$. Then, an application of \eqref{E:H10}\textsubscript{6}   -with $\beta$ replaced by $\beta+1$- tells us that
\begin{align*} 
\widehat\psi_1\left(r^{\frac1n}\right) \, 
& =     \,  r^{-1+\frac{1}{n}}  \|\chi_{(0, r)}\|_{L\sp{\infty, q'; -\frac{1}{q'}, -\beta -1}(0,1)}  \,  \\
& =     \,  r^{-1+\frac{1}{n}}\left\|\Psi_{0; \, q, \frac{1}{q'}, \beta+1}\right\|_{L\sp{q'}(0,r)}  \, \approx \,  r^{-1+\frac{1}{n}} \ell\ell^{-\frac{1}{q}-\beta}(r) \quad \ \text{near} \ 0,
\end{align*}   
thereby \eqref{E:Ca*Aa}\textsubscript{2} holds.
Finally, pick $q \in [1, \infty)$ and $\alpha = \beta= - \frac{1}{q}$.  Then,
\begin{equation*} 
 \widehat\psi_1\left(r^{\frac1n}\right) \,  =     \,    r^{-1+\frac{1}{n}}       \|\chi_{(0, r)}\|_{L\sp{\infty, q'; -\frac{1}{q'}, -\frac{1}{q'}, -1}(0,1)}    \,  =     \,
r^{-1+\frac{1}{n}}    \|  \Psi_{0; \, q, \frac{1}{q'}, \frac{1}{q'}} \ell\ell\ell^{-1}\|_{L^{q'}(0,r)}    \quad \ \text{near} \ 0,
 \end{equation*}
so Lemma~\ref{L:H2} yields that
 \begin{equation*}  
\widehat\psi_1(r) \,  \approx \, r^{1-n}\ell\ell\ell^{-\frac{1}{q}}(r)  \quad \ \text{near} \ 0,
\end{equation*}
namely, \eqref{E:Ca*Aa}\textsubscript{3} is true.

\vspace{1mm}

\noindent  (II) We shall focus just on the case when $m \in [2, n-1]$. Indeed, for $m=n$, in the light of the very definition \eqref{E:optCa}\textsubscript{2} and the convention  \eqref{H=C}, the function $\widehat\psi_n$ agrees with the optimal modulus of continuity $\widehat \omega_n$ in the embedding \eqref{E:HO*A}  whose exact form has been settled  in \eqref{E:HO*B}.

\noindent  Assume first that $q \in [1, \infty]$ and $\alpha > - \frac{1}{q}$. Then,  via \eqref{E:optHma}  and \eqref{E:1pie}
 one has that  
\begin{equation*}
\begin{aligned}
\widehat\psi_m\left(r^{\frac1n}\right) \
& =   \  r^{\frac{1}{n}} \left\|s^{-1+\frac{m- 1}{n}}\chi_{(r,1)}(s)\right\|_{L\sp{\infty, q'; -\alpha-1, -\beta}(0,1)}   \\
& \approx \, r^{-1+\frac{m}{n}}\left\|\Psi_{0; \, q, \alpha+1, \beta}\right\|_{L\sp{q'}(0,r)} +    \ r^{\frac{1}{n}}\left\|\Psi_{\frac{m-1}{n} - 1; \, q, \alpha+1, \beta}\right\|_{L\sp{q'}(r,1-r)} \quad \ \text{near} \ 0.
\end{aligned} 
\end{equation*}  
Hence, the conclusion \eqref{E:Ca*B}\textsubscript{1} follows from the equivalences \eqref{E:H10}\textsubscript{7} and either    \eqref{E:H11}\textsubscript{1} when $m<n-1$ or \eqref{E:H11}\textsubscript{7} when $m=n-1$, all applied with $\alpha$ replaced by $\alpha+1$.

\noindent When $q \in [1, \infty]$, $\alpha  = -\frac{1}{q}$ and $\beta   > -\frac{1}{q}$,  again via   \eqref{E:optHma}  and  \eqref{E:1pie}, one infers that
\begin{align*} 
\widehat\psi_m\left(r^{\frac{1}{n}} \right)  
& =   \  r^{\frac{1}{n}}\|s^{-1+\frac{m- 1}{n}}\chi_{(r,1)}(s)\|_{L\sp{\infty, q'; -\frac{1}{q'}, -\beta -1}(0,1)} \\ 
& \approx \,         r^{-1+\frac{m}{n}}\left\|\Psi_{0; \, q, \frac{1}{q'}, \beta+1}\right\|_{L\sp{q'}(0,r)} +    \ r^{\frac{1}{n}}\left\|\Psi_{\frac {m-1}{n} - 1; \, q, \frac{1}{q'}, \beta+1}\right\|_{L\sp{q'}(r,1-r)}    \quad \  \text{near} \ 0,
\end{align*}  whence an application -with $\beta$ replaced by $\beta+1$- of   \eqref{E:H10}\textsubscript{6} and either    \eqref{E:H11}\textsubscript{1} when $m<n-1$ or \eqref{E:H11}\textsubscript{6} when $m=n-1$
implies  \eqref{E:Ca*B}\textsubscript{2}. 

 \noindent Finally, for $q \in [1, \infty)$ and $\alpha = \beta= - \frac{1}{q}$, 
\begin{equation*}\begin{aligned} 
\widehat\psi_m\left(r^{\frac{1}{n}} \right)  
&   =  \,      r^{\frac{1}{n}}\|s^{-1+\frac{m-1}{n}}\chi_{(r,1)}(s)\|_{L\sp{\infty, q'; -\frac{1}{q'}, -\frac{1}{q'}, -1}(0,1)} \\ 
& \approx \, r^{-1+\frac{m}{n}}\left\|  \Psi_{0; \, q, \frac{1}{q'}, \frac{1}{q'}} \ell\ell\ell^{-1}\right\|_{L^{q'}(0,r)} \   + \ r^{\frac{1}{n}}\left\|  \Psi_{\frac{m-1}{n}-1; \, q, \frac{1}{q'}, \frac{1}{q'}} \ell\ell\ell^{-1}\right\|_{L^{q'}(0,r)} \quad \ \text{near} \ 0.
\end{aligned}\end{equation*}
Consequently, an application of Lemma~\ref{L:H2} yields the conclusion \eqref{E:Ca*B}\textsubscript{3}, ending the proof.
\qed

\vspace{3mm}
\section*{List of the principal notations}\label{sec7}
 
{\it Extended norms and semi-norms}

\vspace{-8mm}
\begin{multicols}{2}
\begin{align*}
 &\| \cdot \|_{C^{0,\sigma(\cdot)}(\Omega)}     
 &\eqref{E:Hnorm}, &\, \mbox{p.}~\pageref{E:Hnorm} \\
&\| \cdot \|_{L^{p, q; \alpha, \beta}(0,1)}   
&\eqref{c:LZfunct}, &\, \mbox{p.}~\pageref{c:LZfunct} \\
&\| \cdot \|_{L^{(p, q; \alpha, \beta)}(0,1)}   
&\eqref{c:LZfunct2}, &\, \mbox{p.}~\pageref{c:LZfunct2} \\
&\| \cdot \|_{\Lambda_\phi(0,1)}  
&\eqref{EndLor_no}, &\, \mbox{p.}~\pageref{EndLor_no} \\
&\| \cdot \|_{\mathsf{M}_{\phi}(0,1)}  
&\eqref{EndMarc_no}, &\, \mbox{p.}~\pageref{EndMarc_no} \\
&\| \cdot \|_{\mathcal M^{\varphi(\cdot)}(\Omega)}   
&\eqref{morreydef}, &\, \mbox{p.}~\pageref{morreydef}
\end{align*}
\columnbreak
\begin{align*}
\\
&\| \cdot \|_{X(0,1)}   
&\eqref{F}, &\, \mbox{p.}~\pageref{F}\\
&\| \cdot \|_{X'(0,1)}   
&\eqref{n.assoc.}, &\, \mbox{p.}~\pageref{n.assoc.} \\
&\| \cdot \|_{X_{m, \text{opt}}'(0,1)}  
&\eqref{E:Opt_S}, &\, \mbox{p.}~\pageref{E:Opt_S} \\
&\| \cdot \|_{_{W^mX(\Omega)}}  
&\eqref{SOB_norm}, &\, \mbox{p.}~\pageref{SOB_norm} \\ 
& \, | \cdot |_{\mathcal L^{\varphi(\cdot)}(\Omega)}   
&\eqref{E:Campsem}, &\, \mbox{p.}~\pageref{E:Campsem}\end{align*}
\end{multicols}

\vspace{-5mm}
{\it Functionals, operators and operations}  
\vspace{-8mm}
\begin{multicols}{2}
\begin{align*}
&\hookrightarrow     
&  &\, \mbox{p.}~\pageref{ARROW} \\
&S_m     
&\qquad \qquad\eqref{E:EOPJC}, &\, \mbox{p.}~\pageref{E:EOPJC} \\
\end{align*}
\columnbreak
\begin{align*}
\\
&u^*  
&\qquad \qquad\eqref{decreasing rearrangementE}, &\, \mbox{p.}~\pageref{decreasing rearrangementE} \\
&u^{**}
&\qquad \qquad\eqref{u^**}, &\, \mbox{p.}~\pageref{u^**}  
 \end{align*}
\end{multicols}
 
\vspace{-5mm}
{\it Functions}
\vspace{-8mm}
\begin{multicols}{2}
\begin{align*}
 &\ell, \ell\ell     
 &\qquad \qquad\eqref{c:convLeLL}, &\mbox{p.}~\pageref{c:convLeLL} \\
&\ell\ell\ell  
&\eqref{c:convLLL}, &\mbox{p.}~\pageref{c:convLLL} 
 \end{align*}
\columnbreak
\begin{align*}
\\
&\phi_X  
&\qquad \qquad\eqref{feb99}, &\mbox{p.}~\pageref{feb99} \\
&\Psi_{\lambda; \, q, \alpha, \beta}
&\qquad \qquad\eqref{E:1pie}, &\mbox{p.}~\pageref{E:1pie}  
 \end{align*}
\end{multicols}

 \vspace{-5mm}
{\it Other symbols}
\vspace{-8mm}
\begin{multicols}{2}
\begin{align*}
 & \lesssim     
 &\qquad \qquad &\qquad \quad \mbox{p.}~\pageref{A} \\
& \gtrsim  
&\qquad \qquad & \qquad \quad \mbox{p.}~\pageref{B} \\
& \approx  
&\qquad \qquad &\qquad \quad \mbox{p.}~\pageref{C} \\
& \  p^{\ast}  
&\qquad \qquad &\qquad \quad \mbox{p.}~\pageref{Sconj}
 \end{align*}
\columnbreak
\begin{align*}
\\
& \  p'  
&\qquad \qquad &\qquad \quad \mbox{p.}~\pageref{Hco} \\
&C^0_b(\Omega)  
&\qquad \qquad &\qquad \quad \mbox{p.}~\pageref{CON} \\
&L^0(E)  
&\qquad \qquad &\qquad \quad \mbox{p.}~\pageref{D} \\
&L^0_+(E)
&\qquad \qquad &\qquad \quad \mbox{p.}~\pageref{E}  
 \end{align*}
\end{multicols}

\vspace{3mm}

\noindent {\bf Data availability}

\smallskip

\noindent No data was used for the research described in the article.

\vspace{3mm}

\noindent {\bf Orcid} 
\smallskip

\noindent {\it Paola Cavaliere}
\orcidlink{0000-0002-7829-0015}.
 \smallskip
\noindent {\it Ladislav Drážný}
\orcidlink{0009-0003-1807-6003}

\begin{bibdiv}
\begin{biblist}
\bib{AF}{book}{
author={Adams, R.A.},
author={Fournier, J.J.F.},
title={Sobolev spaces},
series={Pure and Applied Mathematics (Amsterdam)},
volume={140},
edition={2},
publisher={Elsevier/Academic Press, Amsterdam},
date={2003},
pages={xiv+305},
isbn={0-12-044143-8},
%review={\MR{2424078}},
}

\bib{AX}{article}{
author={Adams, D.R.},
author={Xiao, J.},
title={Morrey spaces in harmonic analysis},
journal={Ark. Mat.},
volume={50},
date={2012},
number={2},
pages={201--230},
issn={0004-2080},
review={\MR{2961318}},
doi={10.1007/s11512-010-0134-0},
}

\bib{AlS}{article}{
author={Almeida, A.},
author={Samko, S.},
title={Approximation in Morrey spaces},
journal={J.~Funct. Anal.},
volume={272},
date={2017},
number={6},
pages={2392--2411},
issn={0022-1236},
review={\MR{3603302}},
doi={10.1016/j.jfa.2016.11.015},
}

\bib{AlS2}{article}{
author={Almeida, A.},
author={Samko, S.},
title={Approximation in generalized Morrey spaces},
journal={Georgian Math.~J.},
volume={25},
date={2018},
number={2},
pages={155--168},
issn={1072-947X},
review={\MR{3808276}},
doi={10.1515/gmj-2018-0023},
}

\bib{BR}{article}{
author={Bennett, C.},
author={Rudnick, K.},
title={On Lorentz-Zygmund spaces},
journal={Dissertationes Math. (Rozprawy Mat.)},
volume={175},
date={1980},
pages={67},
issn={0012-3862},
review={\MR{0576995}},
}

\bib{BS}{book}{
author={Bennett, C.},
author={Sharpley, R.},
title={Interpolation of operators},
series={Pure and Applied Mathematics},
volume={129},
publisher={Academic Press, Inc., Boston, MA},
date={1988},
pages={xiv+469},
isbn={0-12-088730-4},
%review={\MR{0928802}},
}

\bib{BCDS}{article}{
author={Breit, D.},
author={Cianchi, A.},
author={Diening, L.},
author={Schwarzacher, S.},
title={Global Schauder estimates for the $p$-Laplace system},
journal={Arch. Ration. Mech. Anal.},
volume={243},
date={2022},
number={1},
pages={201--255},
issn={0003-9527},
review={\MR{4359452}},
doi={10.1007/s00205-021-01712-w},
}

\bib{Bressan}{book}{
author={Bressan, A.},
title={Lecture notes on functional analysis, with applications to linear partial differential equations},
series={Graduate Studies in Mathematics},
volume={143},
publisher={American Mathematical Society, Providence, RI},
date={2013},
pages={xii+250},
isbn={978-0-8218-8771-4},
%review={\MR{2987297}},
doi={10.1090/gsm/143},
}

\bib{BB}{article}{
author={Brezis, H.},
author={Browder, F.},
title={Partial differential equations in the 20th century},
journal={Adv. Math.},
volume={135},
date={1998},
number={1},
pages={76--144},
issn={0001-8708},
review={\MR{1617413}},
doi={10.1006/aima.1997.1713},
}

\bib{BW}{article}{
author={Br\'ezis, H.},
author={Wainger, S.},
title={A note on limiting cases of Sobolev embeddings and convolution inequalities},
journal={Comm. Partial Differential Equations},
volume={5},
date={1980},
number={7},
pages={773--789},
issn={0360-5302},
review={\MR{0579997}},
doi={10.1080/03605308008820154},
}

\bib{Daf}{article}{
author={Butaev, A.},
author={Dafni, G.},
title={Locally uniform domains and extension of bmo functions},
%language={English, with English and Finnish summaries},
journal={Ann. Fenn. Math.},
volume={48},
date={2023},
number={2},
pages={567--594},
issn={2737-0690},
review={\MR{4628611}},
doi={10.54330/afm.132002},
}

\bib{BySo}{article}{
author={Byun, S.-S.},
author={Softova, L.},
title={Asymptotically regular operators in generalized Morrey spaces},
journal={Bull. Lond. Math. Soc.},
volume={52},
date={2020},
number={1},
pages={64--76},
issn={0024-6093},
review={\MR{4072032}},
doi={10.1112/blms.12306},
}

\bib{Cal}{article}{
author={Calder\'on, A.P.},
title={Lebesgue spaces of differentiable functions and distributions},
conference={title={Proc. Sympos. Pure Math., Vol. IV},},
book={publisher={Amer. Math. Soc., Providence, RI},},
date={1961},
pages={33--49},
review={\MR{0143037}},
}

\bib{Ca0}{article}{
author={Campanato, S.},
title={Propriet\`a{} di h\"olderianit\`a{} di alcune classi di funzioni},
language={Italian},
journal={Ann. Scuola Norm. Sup. Pisa Cl. Sci. (3)},
volume={17},
date={1963},
pages={175--188},
issn={0391-173X},
review={\MR{0156188}},
}

\bib{Ca}{article}{
author={Campanato, S.},
title={Propriet\`a{} di una famiglia di spazi funzionali},
language={Italian},
journal={Ann. Scuola Norm. Sup. Pisa Cl. Sci. (3)},
volume={18},
date={1964},
pages={137--160},
issn={ },
review={ },
}

\bib{Carro}{article}{
author={Carro, M.J.},
author={Raposo, J.A.},
author={Soria, J.},
title={Recent developments in the theory of Lorentz spaces and weighted inequalities},
journal={Mem. Amer. Math. Soc.},
volume={187},
date={2007},
number={877},
pages={xii+128},
issn={0065-9266},
review={\MR{2308059}},
doi={10.1090/memo/0877},
}

\bib{CCPS2}{article}{
author={Cavaliere, P.},
author={Cianchi, A.},
author={Pick, L.},
author={Slav\'ikov\'a, L.},
title={Higher-order Sobolev embeddings into spaces of Campanato and Morrey type},
journal={Nonlinear Anal.},
volume={251},
date={2025},
pages={Paper No. 113678, 31},
issn={0362-546X},
review={\MR{4819026}},
doi={10.1016/j.na.2024.113678},
}

\bib{CM}{article}{
author={Cavaliere, P.},
author={Mihula, Z.},
title={Compactness for Sobolev-type trace operators},
journal={Nonlinear Anal.},
volume={183},
date={2019},
pages={42--69},
issn={0362-546X},
review={\MR{3905264}},
doi={10.1016/j.na.2019.01.013},
}

\bib{CW}{article}{
author={Chae, D.},
author={Wolf, J.},
title={Transport equation in generalized Campanato spaces},
journal={Rev. Mat. Iberoam.},
volume={39},
date={2023},
number={5},
pages={1725--1770},
issn={0213-2230},
review={\MR{4651704}},
doi={10.4171/rmi/1394},
}

\bib{Cia_cont}{article}{
author={Cianchi, A.},
title={Continuity properties of functions from Orlicz-Sobolev spaces and
embedding theorems},
journal={Ann. Scuola Norm. Sup. Pisa Cl. Sci. (4)},
volume={23},
date={1996},
number={3},
pages={575--608},
issn={0391-173X},
review={\MR{1440034}},
}

\bib{Cia_sharp}{article}{
author={Cianchi, A.},
title={A sharp embedding theorem for Orlicz-Sobolev spaces},
journal={Indiana Univ. Math.~J.},
volume={45},
date={1996},
number={1},
pages={39--65},
issn={0022-2518},
review={\MR{1406683}},
doi={10.1512/iumj.1996.45.1958},
}

\bib{Cia_opt}{article}{
author={Cianchi, A.},
title={Optimal Orlicz-Sobolev embeddings},
journal={Rev. Mat. Iberoamericana},
volume={20},
date={2004},
number={2},
pages={427--474},
issn={0213-2230},
review={\MR{2073127}},
doi={10.4171/RMI/396},
}

\bib{Cia_forum}{article}{
author={Cianchi, A.},
title={Higher-order Sobolev and Poincar\'e{} inequalities in Orlicz
spaces},
journal={Forum Math.},
volume={18},
date={2006},
number={5},
pages={745--767},
issn={0933-7741},
review={\MR{2265898}},
doi={10.1515/FORUM.2006.037},
}

\bib{Cia_Maz}{article}{
author={Cianchi, A.},
author={Maz'ya, V.G.},
title={Sobolev embeddings into Orlicz spaces and isocapacitary inequalities},
journal={Trans. Amer. Math. Soc.},
volume={376},
date={2023},
number={1},
pages={91--121},
issn={0002-9947},
review={\MR{4510106}},
doi={10.1090/tran/8689},
}

\bib{CPbmo}{article}{
author={Cianchi, A.},
author={Pick, L.},
title={Sobolev embeddings into BMO, VMO, and $L^\infty$},
journal={Ark. Mat.},
volume={36},
date={1998},
number={2},
pages={317--340},
issn={0004-2080},
review={\MR{1650446}},
doi={10.1007/BF02384772},
}

\bib{CPcamp}{article}{
author={Cianchi, A.},
author={Pick, L.},
title={Sobolev embeddings into spaces of Campanato, Morrey, and H\"older type},
journal={J.~Math. Anal. Appl.},
volume={282},
date={2003},
number={1},
pages={128--150},
issn={0022-247X},
review={\MR{2000334}},
doi={10.1016/S0022-247X(03)00110-0},
}

\bib{CP_trans}{article}{
author={Cianchi, A.},
author={Pick, L.},
title={Optimal Sobolev trace embeddings},
journal={Trans. Amer. Math. Soc.},
volume={368},
date={2016},
number={12},
pages={8349--8382},
issn={0002-9947},
review={\MR{3551574}},
doi={10.1090/tran/6606},
}

\bib{CPS}{article}{
author={Cianchi, A.},
author={Pick, L.},
author={Slav\'ikov\'a, L.},
title={Higher-order Sobolev embeddings and isoperimetric inequalities},
journal={Adv. Math.},
volume={273},
date={2015},
pages={568--650},
issn={0001-8708},
review={\MR{3311772}},
doi={10.1016/j.aim.2014.12.027},
}

\bib{CPS2}{article}{
author={Cianchi, A.},
author={Pick, L.},
author={Slav\'ikov\'a, L.},
title={Sobolev embeddings in Orlicz and Lorentz spaces with measures},
journal={J.~Math. Anal. Appl.},
volume={485},
date={2020},
number={2},
pages={123827, 31},
issn={0022-247X},
review={\MR{4052566}},
doi={10.1016/j.jmaa.2019.123827},
}

\bib{Monia1}{article}{
author={Cianchi, A.},
author={Randolfi, M.},
title={On the modulus of continuity of weakly differentiable functions},
journal={Indiana Univ. Math.~J.},
volume={60},
date={2011},
number={6},
pages={1939--1973},
issn={0022-2518},
review={\MR{3008258}},
doi={10.1512/iumj.2011.60.4441},
}

\bib{CIS}{article}{
author={Cianchi, A.},
author={Schwarzacher, S.},
title={Potential estimates for the $p$-Laplace system with data in divergence form},
journal={J.~Differential Equations},
volume={265},
date={2018},
number={1},
pages={478--499},
issn={0022-0396},
review={\MR{3782551}},
doi={10.1016/j.jde.2018.02.038},
}

\bib{Costea}{article}{
author={Costea, \c S.},
title={Sobolev-Lorentz spaces in the Euclidean setting and counterexamples},
journal={Nonlinear Anal.},
volume={152},
date={2017},
pages={149--182},
issn={0362-546X},
review={\MR{3606307}},
doi={10.1016/j.na.2017.01.001},
}

\bib{DDY}{article}{
author={Deng, D.},
author={Duong, X.},
author={Yan, L.},
title={A characterization of the Morrey-Campanato spaces},
journal={Math.~Z.},
volume={250},
date={2005},
number={3},
pages={641--655},
issn={0025-5874},
review={\MR{2179615}},
doi={10.1007/s00209-005-0769-x},
}

\bib{Die}{book}{
author={Diening, L.},
author={Harjulehto, P.},
author={H\"ast\"o, P.},
author={Růžička, M.},
title={Lebesgue and Sobolev spaces with variable exponents},
series={Lecture Notes in Mathematics},
volume={2017},
publisher={Springer, Heidelberg},
date={2011},
pages={x+509},
isbn={978-3-642-18362-1},
review={\MR{2790542}},
doi={10.1007/978-3-642-18363-8},
}

\bib{DT}{article}{
author={Donaldson, T.K.},
author={Trudinger, N.S.},
title={Orlicz-Sobolev spaces and imbedding theorems},
journal={J.~Functional Analysis},
volume={8},
date={1971},
pages={52--75},
issn={0022-1236},
review={\MR{0301500}},
doi={10.1016/0022-1236(71)90018-8},
}

\bib{Lada}{article}{
author={Drážný, L.},
title={Optimal function spaces in weighted Sobolev embeddings with $\alpha$-homogeneous weights},
journal={Proceedings of the Royal Society of Edinburgh: Section A Mathematics},
%volume={},
%date={},
pages={Published online 2025:1-30},
%issn={},
%review={},
doi={10.1016/0022-1236(71)90018-8},
}

\bib{ED}{book}{
author={Edmunds, D.E.},
author={Evans, W.D.},
title={Hardy operators, function spaces and embeddings},
series={Springer Monographs in Mathematics},
publisher={Springer-Verlag, Berlin},
date={2004},
pages={xii+326},
isbn={3-540-21972-2},
%review={\MR{2091115}},
doi={10.1007/978-3-662-07731-3},
}

\bib{EGO1}{article}{
 author={Edmunds, D.E.},
author={Gurka, P.},
author={Opic, B.},
title={Double exponential integrability of convolution operators in generalized Lorentz-Zygmund spaces},
journal={Indiana Univ. Math.~J.},
volume={44},
date={1995},
number={1},
pages={19--43},
issn={0022-2518},
review={\MR{1336431}},
doi={10.1512/iumj.1995.44.1977},
}

\bib{EGO2}{article}{
author={Edmunds, D.E.},
author={Gurka, P.},
author={Opic, B.},
title={Double exponential integrability, Bessel potentials and embedding theorems},
journal={Studia Math.},
volume={115},
date={1995},
number={2},
pages={151--181},
issn={0039-3223},
review={\MR{1347439}},
}

\bib{EGO3}{article}{
author={Edmunds, D.E.},
author={Gurka, P.},
author={Opic, B.},
title={On embeddings of logarithmic Bessel potential spaces},
journal={J. Funct. Anal.},
volume={146},
date={1997},
number={1},
pages={116--150},
issn={0022-1236},
review={\MR{1446377}},
doi={10.1006/jfan.1996.3037},
}

\bib{EGO4}{article}{ 
author={Edmunds, D.E.},
author={Gurka, P.},
author={Opic, B.},
title={Optimality of embeddings of logarithmic Bessel potential spaces},
journal={Q. J. Math.},
volume={51},
date={2000},
number={2},
pages={185--209},
issn={0033-5606},
review={\MR{1765790}},
doi={10.1093/qjmath/51.2.185},
}

\bib{EGO5}{article}{
author={Edmunds, D.E.},
author={Gurka, P.},
author={Opic, B.},
title={Non-compact and sharp embeddings of logarithmic Bessel potential spaces into H\"older-type spaces},
journal={Z.~Anal. Anwend.},
volume={25},
date={2006},
number={1},
pages={73--80},
issn={0232-2064},
review={\MR{2216882}},
doi={10.4171/ZAA/1278},
}

\bib{EKP}{article}{
author={Edmunds, D. E.},
author={Kerman, R.},
author={Pick, L.},
title={Optimal Sobolev imbeddings involving rearrangement-invariant quasinorms},
journal={J.~Funct. Anal.},
volume={170},
date={2000},
number={2},
pages={307--355},
issn={0022-1236},
review={\MR{1740655}},
doi={10.1006/jfan.1999.3508},
}

\bib{EOP}{article}{
author={Evans, W. D.},
author={Opic, B.},
author={Pick, L.},
title={Interpolation of operators on scales of generalized Lorentz-Zygmund spaces},
journal={Math. Nachr.},
volume={182},
date={1996},
pages={127--181},
issn={0025-584X},
review={\MR{1419893}},
doi={10.1002/mana.19961820108},
}

\bib{FLS}{article}{
author={Fusco, N.},
author={Lions, P.-L.},
author={Sbordone, C.},
title={Sobolev imbedding theorems in borderline cases},
journal={Proc. Amer. Math. Soc.},
volume={124},
date={1996},
number={2},
pages={561--565},
issn={0002-9939},
review={\MR{1301025}},
doi={10.1090/S0002-9939-96-03136-X},
}

\bib{Gagliardo}{article}{
author={Gagliardo, E.},
title={Propriet\`a{} di alcune classi di funzioni in pi\`u{} variabili},
language={Italian},
journal={Ricerche Mat.},
volume={7},
date={1958},
pages={102--137},
issn={0035-5038},
review={\MR{0102740}},
}

\bib{GNO}{article}{
author={Gogatishvili, A.},
author={Neves, J.S.},
author={Opic, B.},
title={Optimal embeddings and compact embeddings of Bessel-potential-type
spaces},
journal={Math. Z.},
volume={262},
date={2009},
number={3},
pages={645--682},
issn={0025-5874},
review={\MR{2506313}},
doi={10.1007/s00209-008-0395-5},
}

\bib{GNO2}{article}{
    author={Gogatishvili, A.},
    author={Neves, J.S.},
    author={Opic, B.},
    title = {Characterization of embeddings of {S}obolev-type spaces into
              generalized {H}\"older spaces defined by {$L^p$}-modulus of
              smoothness},
   Journal = {J. Funct. Anal.},
    volume = {276},
      date = {2019},
    number = {2},
     pages = {636--657},
      issn = {0022-1236,1096-0783},
  review = {\MR{3906286}},
       doi = {10.1016/j.jfa.2018.10.023},
}
\bib{GOP}{article}{
author={Gogatishvili, A.},
author={Opic, B.},
author={Pick, L.},
title={Weighted inequalities for Hardy-type operators involving suprema},
journal={Collect. Math.},
volume={57},
date={2006},
number={3},
pages={227--255},
issn={0010-0757},
review={\MR{2264321}},
}

\bib{Hansson}{article}{
author={Hansson, K.},
title={Imbedding theorems of Sobolev type in potential theory},
journal={Math. Scand.},
volume={45},
date={1979},
number={1},
pages={77--102},
issn={0025-5521},
review={\MR{0567435}},
doi={10.7146/math.scand.a-11827},
}
 
\bib{Henry}{article}{
author={Henry, D.B.},
title={How to remember the Sobolev inequalities},
conference={
title={Differential equations},
address={S\~ao Paulo},
date={1981},
},
book={
series={Lecture Notes in Math.},
volume={957},
publisher={Springer, Berlin-New York},
},
isbn={3-540-11951-5},
date={1982},
pages={97--109},
review={\MR{0679142}},
}

\bib{John}{article}{
author={John, F.},
title={Rotation and strain},
journal={Comm. Pure Appl. Math.},
volume={14},
date={1961},
pages={391--413},
issn={0010-3640},
review={\MR{0138225}},
doi={10.1002/cpa.3160140316},
}
 
\bib{JN}{article}{
author={John, F.},
author={Nirenberg, L.},
title={On functions of bounded mean oscillation},
journal={Comm. Pure Appl. Math.},
volume={14},
date={1961},
pages={415--426},
issn={0010-3640},
review={\MR{0131498}},
doi={10.1002/cpa.3160140317},
}

\bib{Jo}{article}{
author={Jones, P.W.},
title={Quasiconformal mappings and extendability of functions in Sobolev spaces},
journal={Acta Math.},
volume={147},
date={1981},
number={1-2},
pages={71--88},
issn={0001-5962},
review={\MR{0631089}},
doi={10.1007/BF02392869},
}
 
\bib{KP1}{article}{
author={Kerman, R.},
author={Pick, L.},
title={Optimal Sobolev imbeddings},
journal={Forum Math.},
volume={18},
date={2006},
number={4},
pages={535--570},
issn={0933-7741},
review={\MR{2254384}},
doi={10.1515/FORUM.2006.028},
}

\bib{KP2}{article}{
author={Kerman, R.},
author={Pick, L.},
title={Optimal Sobolev imbedding spaces},
journal={Studia Math.},
volume={192},
date={2009},
number={3},
pages={195--217},
issn={0039-3223},
review={\MR{2504838}},
doi={10.4064/sm192-3-1},
}
 
\bib{KP3}{article}{
author={Kerman, R.},
author={Pick, L.},
title={Explicit formulas for optimal rearrangement-invariant norms in Sobolev imbedding inequalities},
journal={Studia Math.},
volume={206},
date={2011},
number={2},
pages={97--119},
issn={0039-3223},
review={\MR{2860302}},
doi={10.4064/sm206-2-1},
}

\bib{Kol}{article}{
author={Kolyada, V.I.},
title={On embedding theorems},
conference={title={NAFSA 8---Nonlinear analysis, function spaces and applications. Vol. 8},},
book={publisher={Czech. Acad. Sci., Prague},},
isbn={978-80-85823-50-9},
date={2007},
pages={34--94},
review={\MR{2657117}},
}

\bib{Korenobook}{book}{
author={Korenovskii, A.},
title={Mean oscillations and equimeasurable rearrangements of functions},
series={Lecture Notes of the Unione Matematica Italiana},
volume={4},
publisher={Springer, Berlin; UMI, Bologna},
date={2007},
pages={viii+188},
isbn={978-3-540-74708-6},
review={\MR{2363526}},
doi={10.1007/978-3-540-74709-3},
}

\bib{Krein}{book}{
author={Kre\v \i n, S.G.},
author={Petun\=\i n, Y.},
author={Sem\"enov, E.M.},
title={Interpolation of linear operators},
series={Translations of Mathematical Monographs},
volume={54},
%note={Translated from the Russian by J. Sz\H ucs},
publisher={American Mathematical Society, Providence, RI},
date={1982},
pages={xii+375},
isbn={0-8218-4505-7},
%review={\MR{0649411}},
}

\bib{Lorentz}{article}{
author={Lorentz, G.G.},
title={On the theory of spaces $\Lambda$},
journal={Pacific J.~Math.},
volume={1},
date={1951},
pages={411--429},
issn={0030-8730},
review={\MR{0044740}},
}

\bib{Jan}{article}{
author={Mal\'y, J.},
author={Pick, L.},
title={An elementary proof of sharp Sobolev embeddings},
journal={Proc. Amer. Math. Soc.},
volume={130},
date={2002},
number={2},
pages={555--563},
issn={0002-9939},
review={\MR{1862137}},
doi={10.1090/S0002-9939-01-06060-9},
}

\bib{Man:21}{article}{
author={Manzo, G.},
title={Some characterizations of a family of spaces defined by means of
oscillations},
journal={Houston J.~Math.},
volume={47},
date={2021},
number={4},
pages={907--930},
issn={0362-1588},
review={\MR{4570578}},
} 

\bib{Martio}{article}{
author={Martio, O.},
author={Sarvas, J.},
title={Injectivity theorems in plane and space},
journal={Ann. Acad. Sci. Fenn. Ser.~A~I~Math.},
volume={4},
date={1979},
number={2},
pages={383--401},
issn={0066-1953},
review={\MR{0565886}},
doi={10.5186/aasfm.1978-79.0413},
}
 
\bib{Mabook}{book}{
author={Maz'ya, V.G.},
title={Sobolev spaces with applications to elliptic partial differential equations},
series={Grundlehren der mathematischen Wissenschaften [Fundamental
Principles of Mathematical Sciences]},
volume={342},
edition={augmented edition},
publisher={Springer, Heidelberg},
date={2011},
pages={xxviii+866},
isbn={978-3-642-15563-5},
%review={\MR{2777530}},
doi={10.1007/978-3-642-15564-2},
}

\bib{MaPobook}{book}{
author={Maz'ya, V.G.},
author={Poborchi, S.V.},
title={Differentiable functions on bad domains},
publisher={World Scientific Publishing Co., Inc., River Edge, NJ},
date={1997},
pages={xx+481},
isbn={981-02-2767-1},
%review={\MR{1643072}},
}

\bib{ZM2}{article}{
author={Mihula, Z.},
title={Optimal behavior of weighted Hardy operators on rearrangement-invariant spaces},
journal={Math. Nachr.},
volume={296},
date={2023},
number={8},
pages={3492--3538},
issn={0025-584X},
review={\MR{4626895}},
doi={10.1002/mana.202200015},
}

\bib{MOR}{article}{
author={Morrey, C.B.Jr.},
title={On the solutions of quasi-linear elliptic partial differential
equations},
journal={Trans. Amer. Math. Soc.},
volume={43},
date={1938},
number={1},
pages={126--166},
issn={0002-9947},
review={\MR{1501936}},
doi={10.2307/1989904},
}

\bib{Moser}{article}{
author={Moser, J.},
title={A sharp form of an inequality by N.~Trudinger},
journal={Indiana Univ. Math.~J.},
volume={20},
date={1971},
number={11},
pages={1077--1092},
issn={0022-2518},
review={\MR{0301504}},
doi={10.1512/iumj.1971.20.20101},
}

\bib{Nir}{article}{
author={Nirenberg, L.},
title={On elliptic partial differential equations},
journal={Ann. Scuola Norm. Sup. Pisa Cl.~Sci.~(3)},
volume={13},
date={1959},
pages={115--162},
issn={0391-173X},
review={\MR{0109940}},
}  

\bib{OP}{article}{
author={Opic, B.},
author={Pick, L.},
title={On generalized Lorentz-Zygmund spaces},
journal={Math. Inequal. Appl.},
volume={2},
date={1999},
number={3},
pages={391--467},
issn={1331-4343},
review={\MR{1698383}},
doi={10.7153/mia-02-35},
}

\bib{ONeil}{article}{
author={O'Neil, R.},
title={Convolution operators and $L(p, q)$ spaces},
journal={Duke Math.~J.},
volume={30},
date={1963},
pages={129--142},
issn={0012-7094},
review={\MR{0146673}},
}

\bib{Pedregal}{book}{
author={Pedregal, P.},
title={Functional analysis, Sobolev spaces, and calculus of variations},
series={Unitext},
volume={157},
   %note={La Matematica per il 3+2},
publisher={Springer, Cham},
date={2024},
pages={xiv+387},
isbn={978-3-031-49245-7},
isbn={978-3-031-49246-4},
review={\MR{4738489}},
doi={10.1007/978-3-031-49246-4},
}

\bib{Peetre}{article}{
author={Peetre, J.},
title={Espaces d'interpolation et th\'eor\`eme de Soboleff},
language={French},
journal={Ann. Inst. Fourier (Grenoble)},
volume={16},
date={1966},
pages={279--317},
issn={0373-0956},
review={\MR{0221282}},
}

\bib{PLK}{article}{
author={Pe\v sa, D.},
title={Lorentz-Karamata spaces},
journal={},
volume={},
date={2023},
number={},
pages={},
issn={},
review={},
doi={arXiv:2006.14455v4~[math.FA]},
}

\bib{Lubos}{article}{
author={Pick, L.},
title={Optimality of function spaces in Sobolev embeddings},
conference={title={Advanced courses of mathematical analysis V},},
book={publisher={World Sci. Publ., Hackensack, NJ},},
isbn={978-981-4699-68-6},
date={2016},
pages={49--117},
review={\MR{3586571}},
}

\bib{Lubook}{book}{
author={Pick, L.},
author={Kufner, A.},
author={John, O.},
author={Fu\v c\'ik, S.},
title={Function spaces. Vol. 1},
series={De Gruyter Series in Nonlinear Analysis and Applications},
volume={14},
edition={extended edition},
publisher={Walter de Gruyter \& Co., Berlin},
date={2013},
pages={xvi+479},
isbn={978-3-11-025041-1},
isbn={978-3-11-025042-8},
%review={\MR{3024912}},
}

\bib{Poho}{article}{
author={Poho\v zaev, S.I.},
title={On the imbedding Sobolev theorem for $pl = n$},
conference={title={Doklady Conference, Section Math.,},},
book={publisher={Moscow Power Inst.},},
isbn={},
date={1965},
pages={158--170},
review={},
}

\bib{Samko}{article}{
author={Rafeiro, H.},
author={Samko, N.},
author={Samko, S.},
title={Morrey-Campanato spaces: an overview},
conference={title={Operator theory, pseudo-differential equations, and mathematical physics},},
book={series={Oper. Theory Adv. Appl.}, volume={228}, publisher={Birkh\"auser/Springer Basel AG, Basel},},
isbn={978-3-0348-0536-0},
isbn={978-3-0348-0537-7},
date={2013},
pages={293--323},
review={\MR{3025501}},
doi={10.1007/978-3-0348-0537-7\_15},
}

\bib{Saw_DiF}{book}{
author={Sawano, Y.},
author={Di Fazio, G.},
author={Hakim, D. I.},
title={Morrey spaces --introduction and applications to integral
operators and PDE's. Vol. II},
series={Monographs and Research Notes in Mathematics},
publisher={CRC Press, Boca Raton, FL},
date={2020},
pages={xviii+409},
isbn={978-0-367-45915-4},
isbn={978-1-00-302907-6},
%review={\MR{4288178}},
}

\bib{sobolev1938theorem}{article}{
author={Sobolev, S.L.},
title={On a theorem of functional analysis}, 
journal={Mat. Sb.},
volume={4},
date={1938},
number={46},
language={Russian},
pages={471--497},
translation={
journal={Trans. Amer. Math. Soc. Ser. 2},
volume={34},
date={1963},
number={2},
pages={39--68},
issn={},
},
review={},
doi={},
}

\bib{Sobook}{book}{
author={Sobolev, S.L.},
title={Some applications of functional analysis in mathematical physics},
series={Translations of Mathematical Monographs},
volume={90},
edition={Russian edition},
note={With comments by V. P. Palamodov},
publisher={American Mathematical Society, Providence, RI},
date={1991},
pages={viii+286},
isbn={0-8218-4549-7},
%review={\MR{1125990}},
doi={10.1090/mmono/090},
}

\bib{Sp}{article}{
author={Spanne, S.},
title={Some function spaces defined using the mean oscillation over cubes},
journal={Ann. Scuola Norm. Sup. Pisa Cl. Sci.~(3)},
volume={19},
date={1965},
pages={593--608},
issn={0391-173X},
review={\MR{0190729}},
}

\bib{Stbook}{book}{
author={Stein, E.M.},
title={Harmonic analysis: real-variable methods, orthogonality, and oscillatory integrals},
series={Princeton Mathematical Series},
volume={43},
note={With the assistance of Timothy S. Murphy;
Monographs in Harmonic Analysis, III},
publisher={Princeton University Press, Princeton, NJ},
date={1993},
pages={xiv+695},
isbn={0-691-03216-5},
%review={\MR{1232192}},
}

\bib{Talenti}{article}{
author={Talenti, G.},
title={An embedding theorem},
conference={
title={Partial differential equations and the calculus of variations,
Vol.\ II},},
book={
series={Progr. Nonlinear Differential Equations Appl.},
volume={2},
publisher={Birkh\"auser Boston, Boston, MA},
},
isbn={0-8176-3425-8},
date={1989},
pages={919--924},
review={\MR{1034035}},
}

\bib{Trie}{book}{
author={Triebel, H.},
title={Theory of function spaces. II},
series={Monographs in Mathematics},
volume={84},
publisher={Birkh\"auser Verlag, Basel},
date={1992},
pages={viii+370},
isbn={3-7643-2639-5},
%review={\MR{1163193}},
doi={10.1007/978-3-0346-0419-2},
}

\bib{TRU}{article}{
author={Trudinger, N.S.},
title={On imbeddings into Orlicz spaces and some applications},
journal={J.~Math. Mech.},
volume={17},
date={1967},
pages={473--483},
review={\MR{0216286}},
doi={10.1512/iumj.1968.17.17028},
}
 
\bib{Va}{article}{
author={V\"ais\"al\"a, J.},
title={Uniform domains},
journal={Tohoku Math. J.~(2)},
volume={40},
date={1988},
number={1},
pages={101--118},
issn={0040-8735},
review={\MR{0927080}},
doi={10.2748/tmj/1178228081},
}

\bib{Yudo}{article}{
author={Yudovi\v c, V.I.},
title={Some estimates connected with integral operators and with solutions of elliptic equations},
language={Russian},
journal={Dokl. Akad. Nauk SSSR},
volume={138},
date={1961},
pages={805--808},
issn={0002-3264},
review={\MR{0140822}},
note = {English translation:  Sov. Math. Dokl.~2 (1961), 746--749}, }

\end{biblist}
\end{bibdiv}

\end{document}